 \def\dated#1{\def\thedate{#1}}%
\newdimen\high%
\newdimen\ul%
\newdimen\wdth%
\def\ratchet#1#2{\ifnum#1<#2\global #1=#2\fi}%
\def\ifnextchar#1#2#3{\let\@tempe%
#1\def\@tempa{#2}\def\@tempb{#3}\futurelet%
    \@tempc\@ifnch}%
\def\@ifnch{\ifx \@tempc \@sptoken \let\@tempd\@xifnch%
      \else \ifx \@tempc \@tempe\let\@tempd\@tempa\else\let\@tempd\@tempb\fi%
      \fi \@tempd}%
\def\:{\let\@sptoken= } \:  
\def\:{\@xifnch} \expandafter\def\: {\futurelet\@tempc\@ifnch}%
\let\ifnextchar\@ifnextchar%
\newdimen\axis \axis=\fontdimen22\textfont2%
\def\scalefactor#1{\ul=#1\ul \X@xbase=#1\X@xbase \Y@ybase=#1\Y@ybase}%
\def\fontscale#1{%
\if#1h\relax%
\font\xydashfont=xydash10 scaled \magstephalf%
\font\xyatipfont=xyatip10 scaled \magstephalf%
\font\xybtipfont=xybtip10 scaled \magstephalf%
\font\xybsqlfont=xybsql10 scaled \magstephalf%
\font\xycircfont=xycirc10 scaled \magstephalf%
\else%
\font\xydashfont=xydash10 scaled \magstep#1%
\font\xyatipfont=xyatip10 scaled \magstep#1%
\font\xybtipfont=xybtip10 scaled \magstep#1%
\font\xybsqlfont=xybsql10 scaled \magstep#1%
\font\xycircfont=xycirc10 scaled \magstep#1%
\fi}%
\def\bfig{\vcenter\bgroup\xy}%
\def\efig{\endxy\egroup}%
\def\car#1#2\nil{#1}%
\def\morphism{\ifnextchar({\morphismp}{\morphismp(0,0)}}%
\def\morphismp(#1){\ifnextchar|{\morphismpp(#1)}{\morphismpp(#1)|a|}}%
\def\morphismpp(#1)|#2|{\ifnextchar/{\morphismppp(#1)|#2|}%
    {\morphismppp(#1)|#2|/>/}}%
\def\morphismppp(#1)|#2|/#3/{%
    \ifnextchar<{\morphismpppp(#1)|#2|/#3/}%
    {\morphismpppp(#1)|#2|/#3/<\default,0>}}%
\def\morphismpppp(#1,#2)|#3|/#4/<#5,#6>[#7`#8;#9]{%
\xend#1\advance \xend by #5%
\yend#2\advance \yend by #6%
\domorphism(#1,#2)|#3|/#4/<#5,#6>[{#7}`{#8};{#9}]}%
\def\domorphism(#1,#2)|#3|/#4/<#5,#6>[#7`#8;#9]{%
\def\next{\car#4.\nil}%
\if@\next\relax%
 \if#3l%
  \ifnum #6>0%
   \POS(#1,#2)*+!!<0ex,\axis>{#7}\ar#4^-{#9} (\xend,\yend)*+!!<0ex,\axis>{#8}%
  \else%
   \POS(#1,#2)*+!!<0ex,\axis>{#7}\ar#4_-{#9} (\xend,\yend)*+!!<0ex,\axis>{#8}%
  \fi%
 \else \if#3m%
    \setbox0\hbox{$#9$}%
   \ifdim \wd0=0pt%
     \POS(#1,#2)*+!!<0ex,\axis>{#7}\ar#4 (\xend,\yend)*+!!<0ex,\axis>{#8}%
   \else%
     \POS(#1,#2)*+!!<0ex,\axis>{#7}\ar#4|-*+<1pt,4pt>{\labelstyle#9}%
       (\xend,\yend)*+!!<0ex,\axis>{#8}%
   \fi%
 \else \if#3r%
  \ifnum #6<0%
   \POS(#1,#2)*+!!<0ex,\axis>{#7}\ar#4^-{#9} (\xend,\yend)*+!!<0ex,\axis>{#8}%
  \else%
   \POS(#1,#2)*+!!<0ex,\axis>{#7}\ar#4_-{#9} (\xend,\yend)*+!!<0ex,\axis>{#8}%
  \fi%
 \else \if#3a%
  \ifnum #5>0%
   \POS(#1,#2)*+!!<0ex,\axis>{#7}\ar#4^-{#9} (\xend,\yend)*+!!<0ex,\axis>{#8}%
  \else%
   \POS(#1,#2)*+!!<0ex,\axis>{#7}\ar#4_-{#9} (\xend,\yend)*+!!<0ex,\axis>{#8}%
  \fi%
 \else \if#3b%
  \ifnum #5<0%
   \POS(#1,#2)*+!!<0ex,\axis>{#7}\ar#4^-{#9} (\xend,\yend)*+!!<0ex,\axis>{#8}%
  \else%
   \POS(#1,#2)*+!!<0ex,\axis>{#7}\ar#4_-{#9} (\xend,\yend)*+!!<0ex,\axis>{#8}%
  \fi%
 \else%
   \POS(#1,#2)*+!!<0ex,\axis>{#7}\ar#4 (\xend,\yend)*+!!<0ex,\axis>{#8}%
 \fi\fi\fi\fi\fi%
\else%
 \if#3l%
  \ifnum #6>0%
   \POS(#1,#2)*+!!<0ex,\axis>{#7}\ar@{#4}^-{#9} (\xend,\yend)*+!!<0ex,\axis>{#8}%
  \else%
   \POS(#1,#2)*+!!<0ex,\axis>{#7}\ar@{#4}_-{#9} (\xend,\yend)*+!!<0ex,\axis>{#8}%
  \fi%
 \else \if#3m%
    \setbox0\hbox{$#9$}%
   \ifdim \wd0=0pt%
     \POS(#1,#2)*+!!<0ex,\axis>{#7}\ar@{#4} (\xend,\yend)*+!!<0ex,\axis>{#8}%
   \else%
     \POS(#1,#2)*+!!<0ex,\axis>{#7}\ar@{#4}|-*+<1pt,4pt>{\labelstyle#9}%
         (\xend,\yend)*+!!<0ex,\axis>{#8}%
   \fi%
 \else \if#3r%
  \ifnum #6<0%
   \POS(#1,#2)*+!!<0ex,\axis>{#7}\ar@{#4}^-{#9} (\xend,\yend)*+!!<0ex,\axis>{#8}%
  \else%
   \POS(#1,#2)*+!!<0ex,\axis>{#7}\ar@{#4}_-{#9} (\xend,\yend)*+!!<0ex,\axis>{#8}%
  \fi%
 \else \if#3a%
  \ifnum #5>0%
   \POS(#1,#2)*+!!<0ex,\axis>{#7}\ar@{#4}^-{#9} (\xend,\yend)*+!!<0ex,\axis>{#8}%
  \else%
   \POS(#1,#2)*+!!<0ex,\axis>{#7}\ar@{#4}_-{#9} (\xend,\yend)*+!!<0ex,\axis>{#8}%
  \fi%
 \else \if#3b%
  \ifnum #5<0%
   \POS(#1,#2)*+!!<0ex,\axis>{#7}\ar@{#4}^-{#9} (\xend,\yend)*+!!<0ex,\axis>{#8}%
  \else%
   \POS(#1,#2)*+!!<0ex,\axis>{#7}\ar@{#4}_-{#9} (\xend,\yend)*+!!<0ex,\axis>{#8}%
  \fi%
 \else%
   \POS(#1,#2)*+!!<0ex,\axis>{#7}\ar@{#4} (\xend,\yend)*+!!<0ex,\axis>{#8}%
 \fi\fi\fi\fi\fi%
\fi\ignorespaces}%
\def\vect(#1,#2)/#3/<#4,#5>{%
 \xend#1 \yend#2 \advance\xend by #4 \advance\yend by #5%
     \POS(#1,#2)\ar#3 (\xend,\yend)}%
\def\squarepppp(#1,#2)|#3|/#4`#5`#6`#7/<#8>[#9]{%
\xpos#1\ypos#2%
\def\next|##1##2##3##4|{%
 \def\xa{##1}\def\xb{##2}\def\xc{##3}\def\xd{##4}\ignorespaces}%
\next|#3|%
\def\next<##1,##2>{\deltax=##1\deltay=##2\ignorespaces}%
\next<#8>%
\def\next[##1`##2`##3`##4;##5`##6`##7`##8]{%
    \def\nodea{##1}\def\nodeb{##2}\def\nodec{##3}\def\noded{##4}%
    \def\labela{##5}\def\labelb{##6}\def\labelc{##7}\def\labeld{##8}\ignorespaces}%
\next[#9]%
\morphism(\xpos,\ypos)|\xd|/{#7}/<\deltax,0>[\nodec`\noded;\labeld]%
\advance \ypos by \deltay%
\morphism(\xpos,\ypos)|\xb|/{#5}/<0,-\deltay>[\nodea`\nodec;\labelb]%
\morphism(\xpos,\ypos)|\xa|/{#4}/<\deltax,0>[\nodea`\nodeb;\labela]%
 \advance \xpos by \deltax%
\morphism(\xpos,\ypos)|\xc|/{#6}/<0,-\deltay>[\nodeb`\noded;\labelc]%
\ignorespaces}%
\def\square{\ifnextchar({\squarep}{\squarep(0,0)}}%
\def\squarep(#1){\ifnextchar|{\squarepp(#1)}{\squarepp(#1)|alrb|}}%
\def\squarepp(#1)|#2|{\ifnextchar/{\squareppp(#1)|#2|}%
    {\squareppp(#1)|#2|/>`>`>`>/}}%
\def\squareppp(#1)|#2|/#3`#4`#5`#6/{%
    \ifnextchar<{\squarepppp(#1)|#2|/#3`#4`#5`#6/}%
    {\squarepppp(#1)|#2|/#3`#4`#5`#6/<\default,\default>}}%
\def\ptrianglepppp(#1,#2)|#3|/#4`#5`#6/<#7>[#8]{%
\xpos#1\ypos#2%
\def\next|##1##2##3|{\def\xa{##1}\def\xb{##2}\def\xc{##3}}%
\next|#3|%
\def\next<##1,##2>{\deltax=##1\deltay=##2\ignorespaces}%
\next<#7>%
\def\next[##1`##2`##3;##4`##5`##6]{%
    \def\nodea{##1}\def\nodeb{##2}\def\nodec{##3}%
    \def\labela{##4}\def\labelb{##5}\def\labelc{##6}}%
\next[#8]%
\advance\ypos by \deltay%
\morphism(\xpos,\ypos)|\xa|/{#4}/<\deltax,0>[\nodea`\nodeb;\labela]%
\morphism(\xpos,\ypos)|\xb|/{#5}/<0,-\deltay>[\nodea`\nodec;\labelb]%
\advance\xpos by \deltax%
\morphism(\xpos,\ypos)|\xc|/{#6}/<-\deltax,-\deltay>[\nodeb`\nodec;\labelc]%
\ignorespaces}%
\def\qtrianglepppp(#1,#2)|#3|/#4`#5`#6/<#7>[#8]{%
\xpos#1\ypos#2%
\def\next|##1##2##3|{\def\xa{##1}\def\xb{##2}\def\xc{##3}}%
\next|#3|%
\def\next<##1,##2>{\deltax=##1\deltay=##2\ignorespaces}%
\next<#7>%
\def\next[##1`##2`##3;##4`##5`##6]{%
    \def\nodea{##1}\def\nodeb{##2}\def\nodec{##3}%
    \def\labela{##4}\def\labelb{##5}\def\labelc{##6}}%
\next[#8]%
\advance\ypos by \deltay%
\morphism(\xpos,\ypos)|\xa|/{#4}/<\deltax,0>[\nodea`\nodeb;\labela]%
\morphism(\xpos,\ypos)|\xb|/{#5}/<\deltax,-\deltay>[\nodea`\nodec;\labelb]%
\advance\xpos by \deltax%
\morphism(\xpos,\ypos)|\xc|/{#6}/<0,-\deltay>[\nodeb`\nodec;\labelc]%
\ignorespaces}%
\def\dtrianglepppp(#1,#2)|#3|/#4`#5`#6/<#7>[#8]{%
\xpos#1\ypos#2%
\def\next|##1##2##3|{\def\xa{##1}\def\xb{##2}\def\xc{##3}}%
\next|#3|%
\def\next<##1,##2>{\deltax=##1\deltay=##2\ignorespaces}%
\next<#7>%
\def\next[##1`##2`##3;##4`##5`##6]{%
    \def\nodea{##1}\def\nodeb{##2}\def\nodec{##3}%
    \def\labela{##4}\def\labelb{##5}\def\labelc{##6}}%
\next[#8]%
\morphism(\xpos,\ypos)|\xc|/{#6}/<\deltax,0>[\nodeb`\nodec;\labelc]%
\advance\ypos by \deltay\advance \xpos by \deltax%
\morphism(\xpos,\ypos)|\xa|/{#4}/<-\deltax,-\deltay>[\nodea`\nodeb;\labela]%
\morphism(\xpos,\ypos)|\xb|/{#5}/<0,-\deltay>[\nodea`\nodec;\labelb]%
\ignorespaces}%
\def\btrianglepppp(#1,#2)|#3|/#4`#5`#6/<#7>[#8]{%
\xpos#1\ypos#2%
\def\next|##1##2##3|{\def\xa{##1}\def\xb{##2}\def\xc{##3}}%
\next|#3|%
\def\next<##1,##2>{\deltax=##1\deltay=##2\ignorespaces}%
\next<#7>%
\def\next[##1`##2`##3;##4`##5`##6]{%
    \def\nodea{##1}\def\nodeb{##2}\def\nodec{##3}%
    \def\labela{##4}\def\labelb{##5}\def\labelc{##6}}%
\next[#8]%
\morphism(\xpos,\ypos)|\xc|/{#6}/<\deltax,0>[\nodeb`\nodec;\labelc]%
\advance\ypos by \deltay%
\morphism(\xpos,\ypos)|\xa|/{#4}/<0,-\deltay>[\nodea`\nodeb;\labela]%
\morphism(\xpos,\ypos)|\xb|/{#5}/<\deltax,-\deltay>[\nodea`\nodec;\labelb]%
\ignorespaces}%
\def\Atrianglepppp(#1,#2)|#3|/#4`#5`#6/<#7>[#8]{%
\xpos#1\ypos#2%
\def\next|##1##2##3|{\def\xa{##1}\def\xb{##2}\def\xc{##3}}%
\next|#3|%
\def\next<##1,##2>{\deltax=##1\deltay=##2\ignorespaces}%
\next<#7>%
\def\next[##1`##2`##3;##4`##5`##6]{%
    \def\nodea{##1}\def\nodeb{##2}\def\nodec{##3}%
    \def\labela{##4}\def\labelb{##5}\def\labelc{##6}}%
\next[#8]%
\multiply\deltax by 2%
\morphism(\xpos,\ypos)|\xc|/{#6}/<\deltax,0>[\nodeb`\nodec;\labelc]%
\divide\deltax by 2%
\advance\ypos by \deltay\advance\xpos by \deltax%
\morphism(\xpos,\ypos)|\xa|/{#4}/<-\deltax,-\deltay>[\nodea`\nodeb;\labela]%
\morphism(\xpos,\ypos)|\xb|/{#5}/<\deltax,-\deltay>[\nodea`\nodec;\labelb]%
\ignorespaces}%
\def\Vtrianglepppp(#1,#2)|#3|/#4`#5`#6/<#7>[#8]{%
\xpos#1\ypos#2%
\def\next|##1##2##3|{\def\xa{##1}\def\xb{##2}\def\xc{##3}}%
\next|#3|%
\def\next<##1,##2>{\deltax=##1\deltay=##2\ignorespaces}%
\next<#7>%
\def\next[##1`##2`##3;##4`##5`##6]{%
    \def\nodea{##1}\def\nodeb{##2}\def\nodec{##3}%
    \def\labela{##4}\def\labelb{##5}\def\labelc{##6}}%
\next[#8]%
\advance\ypos by \deltay%
\morphism(\xpos,\ypos)|\xb|/{#5}/<\deltax,-\deltay>[\nodea`\nodec;\labelb]%
\multiply\deltax by 2%
\morphism(\xpos,\ypos)|\xa|/{#4}/<\deltax,0>[\nodea`\nodeb;\labela]%
\advance\xpos by \deltax \divide \deltax by 2%
\morphism(\xpos,\ypos)|\xc|/{#6}/<-\deltax,-\deltay>[\nodeb`\nodec;\labelc]%
\ignorespaces}%
\def\Ctrianglepppp(#1,#2)|#3|/#4`#5`#6/<#7>[#8]{%
\xpos#1\ypos#2%
\def\next|##1##2##3|{\def\xa{##1}\def\xb{##2}\def\xc{##3}}%
\next|#3|%
\def\next<##1,##2>{\deltax=##1\deltay=##2\ignorespaces}%
\next<#7>%
\def\next[##1`##2`##3;##4`##5`##6]{%
    \def\nodea{##1}\def\nodeb{##2}\def\nodec{##3}%
    \def\labela{##4}\def\labelb{##5}\def\labelc{##6}}%
\next[#8]%
\advance \ypos by \deltay%
\morphism(\xpos,\ypos)|\xc|/{#6}/<\deltax,-\deltay>[\nodeb`\nodec;\labelc]%
\advance\ypos by \deltay \advance \xpos by \deltax%
\morphism(\xpos,\ypos)|\xa|/{#4}/<-\deltax,-\deltay>[\nodea`\nodeb;\labela]%
\multiply\deltay by 2%
\morphism(\xpos,\ypos)|\xb|/{#5}/<0,-\deltay>[\nodea`\nodec;\labelb]%
\ignorespaces}%
\def\Dtrianglepppp(#1,#2)|#3|/#4`#5`#6/<#7>[#8]{%
\xpos#1\ypos#2%
\def\next|##1##2##3|{\def\xa{##1}\def\xb{##2}\def\xc{##3}}%
\next|#3|%
\def\next<##1,##2>{\deltax=##1\deltay=##2\ignorespaces}%
\next<#7>%
\def\next[##1`##2`##3;##4`##5`##6]{%
    \def\nodea{##1}\def\nodeb{##2}\def\nodec{##3}%
    \def\labela{##4}\def\labelb{##5}\def\labelc{##6}}%
\next[#8]%
\advance\xpos by \deltax \advance\ypos by \deltay%
\morphism(\xpos,\ypos)|\xc|/{#6}/<-\deltax,-\deltay>[\nodeb`\nodec;\labelc]%
\advance\xpos by -\deltax \advance\ypos by \deltay%
\morphism(\xpos,\ypos)|\xb|/{#5}/<\deltax,-\deltay>[\nodea`\nodeb;\labelb]%
\multiply \deltay by 2%
\morphism(\xpos,\ypos)|\xa|/{#4}/<0,-\deltay>[\nodea`\nodec;\labela]%
\ignorespaces}%
\def\ptrianglep(#1){\ifnextchar|{\ptrianglepp(#1)}{\ptrianglepp(#1)|alr|}}%
\def\ptrianglepp(#1)|#2|{\ifnextchar/{\ptriangleppp(#1)|#2|}%
    {\ptriangleppp(#1)|#2|/>`>`>/}}%
\def\ptriangleppp(#1)|#2|/#3`#4`#5/{%
    \ifnextchar<{\ptrianglepppp(#1)|#2|/#3`#4`#5/}%
    {\ptrianglepppp(#1)|#2|/#3`#4`#5/<\default,\default>}}%
\def\qtrianglep(#1){\ifnextchar|{\qtrianglepp(#1)}{\qtrianglepp(#1)|alr|}}%
\def\qtrianglepp(#1)|#2|{\ifnextchar/{\qtriangleppp(#1)|#2|}%
    {\qtriangleppp(#1)|#2|/>`>`>/}}%
\def\qtriangleppp(#1)|#2|/#3`#4`#5/{%
    \ifnextchar<{\qtrianglepppp(#1)|#2|/#3`#4`#5/}%
    {\qtrianglepppp(#1)|#2|/#3`#4`#5/<\default,\default>}}%
\def\dtrianglep(#1){\ifnextchar|{\dtrianglepp(#1)}{\dtrianglepp(#1)|lrb|}}%
\def\dtrianglepp(#1)|#2|{\ifnextchar/{\dtriangleppp(#1)|#2|}%
    {\dtriangleppp(#1)|#2|/>`>`>/}}%
\def\dtriangleppp(#1)|#2|/#3`#4`#5/{%
    \ifnextchar<{\dtrianglepppp(#1)|#2|/#3`#4`#5/}%
    {\dtrianglepppp(#1)|#2|/#3`#4`#5/<\default,\default>}}%
\def\btrianglep(#1){\ifnextchar|{\btrianglepp(#1)}{\btrianglepp(#1)|lrb|}}%
\def\btrianglepp(#1)|#2|{\ifnextchar/{\btriangleppp(#1)|#2|}%
    {\btriangleppp(#1)|#2|/>`>`>/}}%
\def\btriangleppp(#1)|#2|/#3`#4`#5/{%
    \ifnextchar<{\btrianglepppp(#1)|#2|/#3`#4`#5/}%
    {\btrianglepppp(#1)|#2|/#3`#4`#5/<\default,\default>}}%
\def\Atrianglep(#1){\ifnextchar|{\Atrianglepp(#1)}{\Atrianglepp(#1)|lrb|}}%
\def\Atrianglepp(#1)|#2|{\ifnextchar/{\Atriangleppp(#1)|#2|}%
    {\Atriangleppp(#1)|#2|/>`>`>/}}%
\def\Atriangleppp(#1)|#2|/#3`#4`#5/{%
    \ifnextchar<{\Atrianglepppp(#1)|#2|/#3`#4`#5/}%
    {\Atrianglepppp(#1)|#2|/#3`#4`#5/<\default,\default>}}%
\def\Vtriangle{\ifnextchar({\Vtrianglep}{\Vtrianglep(0,0)}}%
\def\Vtrianglep(#1){\ifnextchar|{\Vtrianglepp(#1)}{\Vtrianglepp(#1)|alb|}}%
\def\Vtrianglepp(#1)|#2|{\ifnextchar/{\Vtriangleppp(#1)|#2|}%
    {\Vtriangleppp(#1)|#2|/>`>`>/}}%
\def\Vtriangleppp(#1)|#2|/#3`#4`#5/{%
    \ifnextchar<{\Vtrianglepppp(#1)|#2|/#3`#4`#5/}%
    {\Vtrianglepppp(#1)|#2|/#3`#4`#5/<\default,\default>}}%
\def\Ctrianglep(#1){\ifnextchar|{\Ctrianglepp(#1)}{\Ctrianglepp(#1)|arb|}}%
\def\Ctrianglepp(#1)|#2|{\ifnextchar/{\Ctriangleppp(#1)|#2|}%
    {\Ctriangleppp(#1)|#2|/>`>`>/}}%
\def\Ctriangleppp(#1)|#2|/#3`#4`#5/{%
    \ifnextchar<{\Ctrianglepppp(#1)|#2|/#3`#4`#5/}%
    {\Ctrianglepppp(#1)|#2|/#3`#4`#5/<\default,\default>}}%
\def\Dtrianglep(#1){\ifnextchar|{\Dtrianglepp(#1)}{\Dtrianglepp(#1)|alb|}}%
\def\Dtrianglepp(#1)|#2|{\ifnextchar/{\Dtriangleppp(#1)|#2|}%
    {\Dtriangleppp(#1)|#2|/>`>`>/}}%
\def\Dtriangleppp(#1)|#2|/#3`#4`#5/{%
    \ifnextchar<{\Dtrianglepppp(#1)|#2|/#3`#4`#5/}%
    {\Dtrianglepppp(#1)|#2|/#3`#4`#5/<\default,\default>}}%
\def\Atrianglepairpppp(#1)|#2|/#3`#4`#5`#6`#7/<#8>[#9]{%
\def\next(##1,##2){\xpos##1\ypos##2}%
\next(#1)%
\def\next|##1##2##3##4##5|{\def\xa{##1}\def\xb{##2}%
\def\xc{##3}\def\xd{##4}\def\xe{##5}}%
\next|#2|%
\def\next<##1,##2>{\deltax=##1\deltay=##2\ignorespaces}%
\next<#8>%
\def\next[##1`##2`##3`##4;##5`##6`##7`##8`##9]{%
 \def\nodea{##1}\def\nodeb{##2}\def\nodec{##3}\def\noded{##4}%
 \def\labela{##5}\def\labelb{##6}\def\labelc{##7}\def\labeld{##8}\def\labele{##9}}%
\next[#9]%
\morphism(\xpos,\ypos)|\xd|/{#6}/<\deltax,0>[\nodeb`\nodec;\labeld]%
\advance\xpos by \deltax%
\morphism(\xpos,\ypos)|\xe|/{#7}/<\deltax,0>[\nodec`\noded;\labele]%
\advance\ypos by \deltay%
\morphism(\xpos,\ypos)|\xa|/{#3}/<-\deltax,-\deltay>[\nodea`\nodeb;\labela]%
\morphism(\xpos,\ypos)|\xb|/{#4}/<0,-\deltay>[\nodea`\nodec;\labelb]%
\morphism(\xpos,\ypos)|\xc|/{#5}/<\deltax,-\deltay>[\nodea`\noded;\labelc]%
\ignorespaces}%
\def\Vtrianglepairpppp(#1)|#2|/#3`#4`#5`#6`#7/<#8>[#9]{%
\def\next(##1,##2){\xpos##1\ypos##2}%
\next(#1)%
\def\next|##1##2##3##4##5|{\def\xa{##1}\def\xb{##2}%
\def\xc{##3}\def\xd{##4}\def\xe{##5}}%
\next|#2|%
\def\next<##1,##2>{\deltax=##1\deltay=##2\ignorespaces}%
\next<#8>%
\def\next[##1`##2`##3`##4;##5`##6`##7`##8`##9]{%
 \def\nodea{##1}\def\nodeb{##2}\def\nodec{##3}\def\noded{##4}%
 \def\labela{##5}\def\labelb{##6}\def\labelc{##7}\def\labeld{##8}\def\labele{##9}}%
\next[#9]%
\advance\ypos by \deltay%
\morphism(\xpos,\ypos)|\xa|/{#3}/<\deltax,0>[\nodea`\nodeb;\labela]%
\morphism(\xpos,\ypos)|\xc|/{#5}/<\deltax,-\deltay>[\nodea`\noded;\labelc]%
\advance\xpos by \deltax%
\morphism(\xpos,\ypos)|\xb|/{#4}/<\deltax,0>[\nodeb`\nodec;\labelb]%
\morphism(\xpos,\ypos)|\xd|/{#6}/<0,-\deltay>[\nodeb`\noded;\labeld]%
\advance\xpos by \deltax%
\morphism(\xpos,\ypos)|\xe|/{#7}/<-\deltax,-\deltay>[\nodec`\noded;\labele]%
\ignorespaces}%
\def\Ctrianglepairpppp(#1)|#2|/#3`#4`#5`#6`#7/<#8>[#9]{%
\def\next(##1,##2){\xpos##1\ypos##2}%
\next(#1)%
\def\next|##1##2##3##4##5|{\def\xa{##1}\def\xb{##2}%
\def\xc{##3}\def\xd{##4}\def\xe{##5}}%
\next|#2|%
\def\next<##1,##2>{\deltax=##1\deltay=##2\ignorespaces}%
\next<#8>%
\def\next[##1`##2`##3`##4;##5`##6`##7`##8`##9]{%
 \def\nodea{##1}\def\nodeb{##2}\def\nodec{##3}\def\noded{##4}%
 \def\labela{##5}\def\labelb{##6}\def\labelc{##7}\def\labeld{##8}\def\labele{##9}}%
\next[#9]%
\advance\ypos by \deltay%
\morphism(\xpos,\ypos)|\xe|/{#7}/<0,-\deltay>[\nodec`\noded;\labele]%
\advance\xpos by -\deltax%
\morphism(\xpos,\ypos)|\xc|/{#5}/<\deltax,0>[\nodeb`\nodec;\labelc]%
\morphism(\xpos,\ypos)|\xd|/{#6}/<\deltax,-\deltay>[\nodeb`\noded;\labeld]%
\advance\ypos by \deltay%
\advance\xpos by \deltax%
\morphism(\xpos,\ypos)|\xa|/{#3}/<-\deltax,-\deltay>[\nodea`\nodeb;\labela]%
\morphism(\xpos,\ypos)|\xb|/{#4}/<0,-\deltay>[\nodea`\nodec;\labelb]%
\ignorespaces}%
\def\Dtrianglepairpppp(#1)|#2|/#3`#4`#5`#6`#7/<#8>[#9]{%
\def\next(##1,##2){\xpos##1\ypos##2}%
\next(#1)%
\def\next|##1##2##3##4##5|{\def\xa{##1}\def\xb{##2}%
\def\xc{##3}\def\xd{##4}\def\xe{##5}}%
\next|#2|%
\def\next<##1,##2>{\deltax=##1\deltay=##2\ignorespaces}%
\next<#8>%
\def\next[##1`##2`##3`##4;##5`##6`##7`##8`##9]{%
 \def\nodea{##1}\def\nodeb{##2}\def\nodec{##3}\def\noded{##4}%
 \def\labela{##5}\def\labelb{##6}\def\labelc{##7}\def\labeld{##8}\def\labele{##9}}%
\next[#9]%
\advance\ypos by \deltay%
\morphism(\xpos,\ypos)|\xc|/{#5}/<\deltax,0>[\nodeb`\nodec;\labelc]%
\morphism(\xpos,\ypos)|\xd|/{#6}/<0,-\deltay>[\nodeb`\noded;\labeld]%
\advance\ypos by \deltay%
\morphism(\xpos,\ypos)|\xa|/{#3}/<0,-\deltay>[\nodea`\nodeb;\labela]%
\morphism(\xpos,\ypos)|\xb|/{#4}/<\deltax,-\deltay>[\nodea`\nodec;\labelb]%
\advance\ypos by -\deltay%
\advance\xpos by \deltax%
\morphism(\xpos,\ypos)|\xe|/{#7}/<-\deltax,-\deltay>[\nodec`\noded;\labele]%
\ignorespaces}%
\def\Atrianglepairp(#1){\ifnextchar|{\Atrianglepairpp(#1)}%
{\Atrianglepairpp(#1)|lmrbb|}}%
\def\Atrianglepairpp(#1)|#2|{\ifnextchar/{\Atrianglepairppp(#1)|#2|}%
    {\Atrianglepairppp(#1)|#2|/>`>`>`>`>/}}%
\def\Atrianglepairppp(#1)|#2|/#3`#4`#5`#6`#7/{%
    \ifnextchar<{\Atrianglepairpppp(#1)|#2|/#3`#4`#5`#6`#7/}%
    {\Atrianglepairpppp(#1)|#2|/#3`#4`#5`#6`#7/<\default,\default>}}%
\def\Vtrianglepairp(#1){\ifnextchar|{\Vtrianglepairpp(#1)}%
{\Vtrianglepairpp(#1)|aalmr|}}%
\def\Vtrianglepairpp(#1)|#2|{\ifnextchar/{\Vtrianglepairppp(#1)|#2|}%
    {\Vtrianglepairppp(#1)|#2|/>`>`>`>`>/}}%
\def\Vtrianglepairppp(#1)|#2|/#3`#4`#5`#6`#7/{%
    \ifnextchar<{\Vtrianglepairpppp(#1)|#2|/#3`#4`#5`#6`#7/}%
    {\Vtrianglepairpppp(#1)|#2|/#3`#4`#5`#6`#7/<\default,\default>}}%
\def\Ctrianglepairp(#1){\ifnextchar|{\Ctrianglepairpp(#1)}%
{\Ctrianglepairpp(#1)|lrmlr|}}%
\def\Ctrianglepairpp(#1)|#2|{\ifnextchar/{\Ctrianglepairppp(#1)|#2|}%
    {\Ctrianglepairppp(#1)|#2|/>`>`>`>`>/}}%
\def\Ctrianglepairppp(#1)|#2|/#3`#4`#5`#6`#7/{%
    \ifnextchar<{\Ctrianglepairpppp(#1)|#2|/#3`#4`#5`#6`#7/}%
    {\Ctrianglepairpppp(#1)|#2|/#3`#4`#5`#6`#7/<\default,\default>}}%
\def\Dtrianglepairp(#1){\ifnextchar|{\Dtrianglepairpp(#1)}%
{\Dtrianglepairpp(#1)|lrmlr|}}%
\def\Dtrianglepairpp(#1)|#2|{\ifnextchar/{\Dtrianglepairppp(#1)|#2|}%
    {\Dtrianglepairppp(#1)|#2|/>`>`>`>`>/}}%
\def\Dtrianglepairppp(#1)|#2|/#3`#4`#5`#6`#7/{%
    \ifnextchar<{\Dtrianglepairpppp(#1)|#2|/#3`#4`#5`#6`#7/}%
    {\Dtrianglepairpppp(#1)|#2|/#3`#4`#5`#6`#7/<\default,\default>}}%
\def\pplace[#1](#2,#3)[#4]{\POS(#2,#3)*+!!<0ex,\axis>!#1{#4}\ignorespaces}%
\def\cplace(#1,#2)[#3]{\POS(#1,#2)*+!!<0ex,\axis>{#3}\ignorespaces}%
\def\pullback#1]#2]{\square#1]\trident#2]\ignorespaces}%
\def\tridentppp|#1#2#3|/#4`#5`#6/<#7,#8>[#9]{%
\def\next[##1;##2`##3`##4]{\def\nodee{##1}\def\labele{##2}%
   \def\labelf{##3}\def\labelg{##4}}%
\next[#9]%
\advance \xpos by -\deltax%
\advance \xpos by -#7\advance \ypos by #8%
\advance\deltax by #7%
\morphism(\xpos,\ypos)|#1|/{#4}/<\deltax,-#8>[\nodee`\nodeb;\labele]%
\advance\deltax by -#7%
\morphism(\xpos,\ypos)|#2|/{#5}/<#7,-#8>[\nodee`\nodea;\labelf]%
\advance\deltay by #8%
\morphism(\xpos,\ypos)|#3|/{#6}/<#7,-\deltay>[\nodee`\nodec;\labelg]%
\ignorespaces}%
\def\trident{\ifnextchar|{\tridentp}{\tridentp|amb|}}%
\def\tridentp|#1|{\ifnextchar/{\tridentpp|#1|}{\tridentpp|#1|/{>}`{>}`{>}/}}%
\def\tridentpp|#1|/#2/{\ifnextchar<{\tridentppp|#1|/#2/}%
  {\tridentppp|#1|/#2/<500,500>}}%
\def\setmorphismwidth#1#2#3#4{%
 \setbox0=\hbox{$#1{\labelstyle#3#3}#2$}#4=\wd0%
 \divide #4 by 2 \divide #4 by \ul%
 \advance #4 by 350 \ratchet{#4}{500}}%
\def\setSquarewidth[#1`#2`#3`#4;#5`#6`#7`#8]{%
 \setmorphismwidth{#1}{#2}{#5}{\topw}%
 \setmorphismwidth{#3}{#4}{#8}{\botw}%
\ratchet{\topw}{\botw}}%
\def\Squarepppp(#1)|#2|/#3/<#4>[#5]{%
 \setSquarewidth[#5]%
 \squarepppp(#1)|#2|/#3/<\topw,#4>[#5]%
\ignorespaces}%
\def\Square{\ifnextchar({\Squarep}{\Squarep(0,0)}}%
\def\Squarep(#1){\ifnextchar|{\Squarepp(#1)}{\Squarepp(#1)|alrb|}}%
\def\Squarepp(#1)|#2|{\ifnextchar/{\Squareppp(#1)|#2|}%
    {\Squareppp(#1)|#2|/>`>`>`>/}}%
\def\Squareppp(#1)|#2|/#3`#4`#5`#6/{%
    \ifnextchar<{\Squarepppp(#1)|#2|/#3`#4`#5`#6/}%
    {\Squarepppp(#1)|#2|/#3`#4`#5`#6/<\default>}}%
\def\hsquarespppp(#1,#2)|#3|/#4/<#5>[#6;#7]{%
\Xpos=#1\Ypos=#2%
\def\next|##1##2##3##4##5##6##7|{%
 \def\Xa{##1}\def\Xb{##2}\def\Xc{##3}\def\Xd{##4}%
 \def\Xe{##5}\def\Xf{##6}\def\Xg{##7}}%
\next|#3|%
\def\next<##1,##2,##3>{\deltaX=##1 \deltaXprime=##2 \deltaY=##3}%
\next<#5>%
\def\next[##1`##2`##3`##4`##5`##6]{%
 \def\Nodea{##1}\def\Nodeb{##2}\def\Nodec{##3}%
 \def\Noded{##4}\def\Nodee{##5}\def\Nodef{##6}}%
\next[#6]%
\def\next[##1`##2`##3`##4`##5`##6`##7]{%
 \def\Labela{##1}\def\Labelb{##2}\def\Labelc{##3}\def\Labeld{##4}%
 \def\Labele{##5}\def\Labelf{##6}\def\Labelg{##7}}%
\next[#7]%
\dohsquares/#4/}%
\def\dohsquares/#1`#2`#3`#4`#5`#6`#7/{%
\squarepppp(\Xpos,\Ypos)|\Xa\Xc\Xd\Xf|/#1`#3`#4`#6/<\deltaX,\deltaY>%
 [\Nodea`\Nodeb`\Noded`\Nodee;\Labela`\Labelc`\Labeld`\Labelf]%
 \advance \Xpos by \deltaX%
\squarepppp(\Xpos,\Ypos)|\Xb\Xd\Xe\Xg|/#2``#5`#7/<\deltaXprime,\deltaY>%
[\Nodeb`\Nodec`\Nodee`\Nodef;\Labelb``\Labele`\Labelg]%
\ignorespaces}%
\def\hsquaresp(#1){\ifnextchar|{\hsquarespp(#1)}{\hsquarespp%
(#1)|aalmrbb|}}%
\def\hsquarespp(#1)|#2|{\ifnextchar/{\hsquaresppp(#1)|#2|}%
    {\hsquaresppp(#1)|#2|/>`>`>`>`>`>`>/}}%
\def\hsquaresppp(#1)|#2|/#3/{%
    \ifnextchar<{\hsquarespppp(#1)|#2|/#3/}%
    {\hsquarespppp(#1)|#2|/#3/<\default,\default,\default>}}%
\def\hSquarespppp(#1,#2)|#3|/#4/<#5>[#6;#7]{%
\Xpos=#1\Ypos=#2%
\def\next|##1##2##3##4##5##6##7|{%
 \def\Xa{##1}\def\Xb{##2}\def\Xc{##3}\def\Xd{##4}%
 \def\Xe{##5}\def\Xf{##6}\def\Xg{##7}}%
\next|#3|%
\deltaY=#5%
\def\next[##1`##2`##3`##4`##5`##6]{%
 \def\Nodea{##1}\def\Nodeb{##2}\def\Nodec{##3}%
 \def\Noded{##4}\def\Nodee{##5}\def\Nodef{##6}}%
\next[#6]%
\def\next[##1`##2`##3`##4`##5`##6`##7]{%
 \def\Labela{##1}\def\Labelb{##2}\def\Labelc{##3}\def\Labeld{##4}%
 \def\Labele{##5}\def\Labelf{##6}\def\Labelg{##7}}%
\next[#7]%
\dohSquares/#4/}%
\def\dohSquares/#1`#2`#3`#4`#5`#6`#7/{%
\Squarepppp(\Xpos,\Ypos)|\Xa\Xc\Xd\Xf|/#1`#3`#4`#6/<\deltaY>%
 [\Nodea`\Nodeb`\Noded`\Nodee;\Labela`\Labelc`\Labeld`\Labelf]%
 \advance \Xpos by \topw%
\Squarepppp(\Xpos,\Ypos)|\Xb\Xd\Xe\Xg|/#2``#5`#7/<\deltaY>%
[\Nodeb`\Nodec`\Nodee`\Nodef;\Labelb``\Labele`\Labelg]%
\ignorespaces}%
\def\hSquaresp(#1){\ifnextchar|{\hSquarespp(#1)}{\hSquarespp%
(#1)|aalmrbb|}}%
\def\hSquarespp(#1)|#2|{\ifnextchar/{\hSquaresppp(#1)|#2|}%
    {\hSquaresppp(#1)|#2|/>`>`>`>`>`>`>/}}%
\def\hSquaresppp(#1)|#2|/#3/{%
    \ifnextchar<{\hSquarespppp(#1)|#2|/#3/}%
    {\hSquarespppp(#1)|#2|/#3/<\default>}}%
\def\vSquarespppp(#1,#2)|#3|/#4/<#5,#6>[#7;#8]{%
\Xpos=#1\Ypos=#2%
\def\next|##1##2##3##4##5##6##7|{%
 \def\Xa{##1}\def\Xb{##2}\def\Xc{##3}\def\Xd{##4}%
 \def\Xe{##5}\def\Xf{##6}\def\Xg{##7}}%
\next|#3|%
\deltaX=#5%
\deltaY=#6%
\def\next[##1`##2`##3`##4`##5`##6]{%
 \def\Nodea{##1}\def\Nodeb{##2}\def\Nodec{##3}%
 \def\Noded{##4}\def\Nodee{##5}\def\Nodef{##6}}%
\next[#7]%
\def\next[##1`##2`##3`##4`##5`##6`##7]{%
 \def\Labela{##1}\def\Labelb{##2}\def\Labelc{##3}\def\Labeld{##4}%
 \def\Labele{##5}\def\Labelf{##6}\def\Labelg{##7}}%
\next[#8]%
\dovSquares/#4/\ignorespaces}%
\def\dovSquares/#1`#2`#3`#4`#5`#6`#7/{%
\setmorphismwidth{\Nodea}{\Nodeb}{\Labela}{\topw}%
\setmorphismwidth{\Nodec}{\Noded}{\Labeld}{\botw}%
\ratchet{\topw}{\botw}%
\setmorphismwidth{\Nodee}{\Nodef}{\Labelg}{\botw}%
\ratchet{\topw}{\botw}%
\square(\Xpos,\Ypos)|\Xd\Xe\Xf\Xg|/`#5`#6`#7/<\topw,\deltaX>%
 [\Nodec`\Noded`\Nodee`\Nodef;`\Labele`\Labelf`\Labelg]%
\advance \Ypos by \deltaX%
\square(\Xpos,\Ypos)|\Xa\Xb\Xc\Xd|/#1`#2`#3`#4/<\topw,\deltaY>%
 [\Nodea`\Nodeb`\Nodec`\Noded;\Labela`\Labelb`\Labelc`\Labeld]%
}%
\def\vSquaresp(#1){\ifnextchar|{\vSquarespp(#1)}{\vSquarespp%
(#1)|alrmlrb|}}%
\def\vSquarespp(#1)|#2|{\ifnextchar/{\vSquaresppp(#1)|#2|}%
    {\vSquaresppp(#1)|#2|/>`>`>`>`>`>`>/}}%
\def\vSquaresppp(#1)|#2|/#3/{%
    \ifnextchar<{\vSquarespppp(#1)|#2|/#3/}%
    {\vSquarespppp(#1)|#2|/#3/<\default,\default>}}%
\def\osquarepppp(#1)|#2|/#3`#4`#5`#6/<#7>[#8]{\squarepppp%
 (#1)|#2|/#3`#4`#5`#6/<#7>[#8]%
 \let\Nodea\nodea\let\Nodeb\nodeb%
\let\Nodec\nodec\let\Noded\noded\Xpos=\xpos\Ypos=\ypos%
\deltaX=\deltax \deltaY=\deltay \isquare}%
\def\osquarep(#1){\ifnextchar|{\osquarepp(#1)}{\osquarepp(#1)|alrb|}}%
\def\osquarepp(#1)|#2|{\ifnextchar/{\osquareppp(#1)|#2|}%
    {\osquareppp(#1)|#2|/>`>`>`>/}}%
\def\osquareppp(#1)|#2|/#3`#4`#5`#6/{%
    \ifnextchar<{\osquarepppp(#1)|#2|/#3`#4`#5`#6/}%
    {\osquarepppp(#1)|#2|/#3`#4`#5`#6/<1500,1500>}}%
\def\isquarepppp(#1)|#2|/#3`#4`#5`#6/<#7>[#8]{%
 \squarepppp(#1)|#2|/#3`#4`#5`#6/<#7>[#8]%
\ifnextchar|{\cubep}{\cubep|mmmm|}}%
\def\cubep|#1|{\ifnextchar/{\cubepp|#1|}{\cubepp|#1|/>`>`>`>/}}%
\def\isquare{\ifnextchar({\isquarep}{\isquarep(\default,\default)}}%
\def\isquarep(#1){\ifnextchar|{\isquarepp(#1)}{\isquarepp(#1)|alrb|}}%
\def\isquarepp(#1)|#2|{\ifnextchar/{\isquareppp(#1)|#2|}%
    {\isquareppp(#1)|#2|/>`>`>`>/}}%
\def\isquareppp(#1)|#2|/#3`#4`#5`#6/{%
    \ifnextchar<{\isquarepppp(#1)|#2|/#3`#4`#5`#6/}%
    {\isquarepppp(#1)|#2|/#3`#4`#5`#6/<500,500>}}%
\def\cubepp|#1#2#3#4|/#5`#6`#7`#8/[#9]{%
\def\next[##1`##2`##3`##4]{\gdef\Labela{##1}%
\gdef\Labelb{##2}\gdef\Labelc{##3}\gdef\Labeld{##4}}\next[#9]%
\xend\xpos \yend\ypos%
\Xend\xend\advance\Xend by -\Xpos%
\Yend\yend\advance\Yend by -\Ypos%
\domorphism(\Xpos,\Ypos)|#2|/#6/<\Xend,\Yend>[\Nodeb`\nodeb;\Labelb]%
\advance\Xpos by-\deltaX%
\advance\xend by-\deltax%
\Xend\xend\advance\Xend by -\Xpos%
\domorphism(\Xpos,\Ypos)|#1|/#5/<\Xend,\Yend>[\Nodea`\nodea;\Labela]%
\advance\Ypos by-\deltaY%
\advance\yend by-\deltay%
\Yend\yend\advance\Yend by -\Ypos%
\domorphism(\Xpos,\Ypos)|#3|/#7/<\Xend,\Yend>[\Nodec`\nodec;\Labelc]%
\advance\Xpos by\deltaX%
\advance\xend by\deltax%
\Xend\xend\advance\Xend by -\Xpos%
\domorphism(\Xpos,\Ypos)|#4|/#8/<\Xend,\Yend>[\Noded`\noded;\Labeld]%
\ignorespaces}%
\def\setwdth#1#2{\setbox0\hbox{$\labelstyle#1$}\wdth=\wd0%
\setbox0\hbox{$\labelstyle#2$}\ifnum\wdth<\wd0 \wdth=\wd0 \fi}%
\def\topppp/#1/<#2>^#3_#4{\:%
\ifnum#2=0%
   \setwdth{#3}{#4}\deltax=\wdth \divide \deltax by \ul%
   \advance \deltax by \defaultmargin  \ratchet{\deltax}{200}%
\else \deltax #2%
\fi%
\xy\ar@{#1}^{#3}_{#4}(\deltax,0) \endxy%
\:}%
\def\toppp/#1/<#2>^#3{\ifnextchar_{\topppp/#1/<#2>^{#3}}{\topppp/#1/<#2>^{#3}_{}}}%
\def\topp/#1/<#2>{\ifnextchar^{\toppp/#1/<#2>}{\toppp/#1/<#2>^{}}}%
\def\toop/#1/{\ifnextchar<{\topp/#1/}{\topp/#1/<0>}}%
\def\to{\ifnextchar/{\toop}{\toop/>/}}%
\def\epi{\to/->>/}%
\def\twopppp/#1`#2/<#3>^#4_#5{\:%
\ifnum0=#3%
  \setwdth{#4}{#5}\deltax=\wdth \divide \deltax by \ul \advance \deltax%
  by \defaultmargin \ratchet{\deltax}{200}%
\else \deltax#3 \fi%
\xy\ar@{#1}@<2.5pt>^{#4}(\deltax,0)%
\ar@{#2}@<-2.5pt>_{#5}(\deltax,0)\endxy\:}%
\def\twoppp/#1`#2/<#3>^#4{\ifnextchar_{\twopppp/#1`#2/<#3>^{#4}}%
  {\twopppp/#1`#2/<#3>^{#4}_{}}}%
\def\twopp/#1`#2/<#3>{\ifnextchar^{\twoppp/#1`#2/<#3>}{\twoppp/#1`#2/<#3>^{}}}%
\def\twop/#1`#2/{\ifnextchar<{\twopp/#1`#2/}{\twopp/#1`#2/<0>}}%
\def\threeppppp/#1`#2`#3/<#4>^#5|#6_#7{\:%
\ifnum0=#4%
\setbox0\hbox{$\labelstyle#5$}\wdth=\wd0%
\setbox0\hbox{$\labelstyle#6$}\ifnum\wdth<\wd0 \wdth=\wd0 \fi%
\setbox0\hbox{$\labelstyle#7$}\ifnum\wdth<\wd0 \wdth=\wd0 \fi%
\deltax=\wdth \divide \deltax by \ul \advance \deltax by%
\defaultmargin \ratchet{\deltax}{300}%
\else\deltax#4 \fi%
    \xy \ifnum\wd0=0 \ar@{#2}(\deltax,0)%
    \else \ar@{#2}|{#6}(\deltax,0)\fi%
\ar@{#1}@<4.5pt>^{#5}(\deltax,0)%
\ar@{#3}@<-4.5pt>_{#7}(\deltax,0)\endxy\:}%
\def\threepppp/#1`#2`#3/<#4>^#5|#6{\ifnextchar_{\threeppppp%
  /#1`#2`#3/<#4>^{#5}|{#6}}{\threeppppp/#1`#2`#3/<#4>^{#5}|{#6}_{}}}%
\def\threeppp/#1`#2`#3/<#4>^#5{\ifnextchar|{\threepppp%
  /#1`#2`#3/<#4>^{#5}}{\threepppp/#1`#2`#3/<#4>^{#5}|{}}}%
\def\threepp/#1`#2`#3/<#4>{\ifnextchar^{\threeppp/#1`#2`#3/<#4>}%
  {\threeppp/#1`#2`#3/<#4>^{}}}%
\def\threep/#1`#2`#3/{\ifnextchar<{\threepp/#1`#2`#3/}%
  {\threepp/#1`#2`#3/<0>}}%
\def\twoar(#1,#2){{%
 \scalefactor{0.1}%
 \deltax#1\deltay#2%
 \deltaX=\ifnum\deltax<0-\fi\deltax%
 \deltaY=\ifnum\deltay<0-\fi\deltay%
 \Xend\deltax \multiply \Xend by \deltax%
 \Yend\deltay \multiply \Yend by \deltay%
 \advance\Xend by \Yend \multiply \Xend by 3%
 \ifnum \deltaX > \deltaY%
    \multiply \deltaX by 3 \advance \deltaX by \deltaY%
 \else%
    \multiply \deltaY by 3 \advance \deltaX by \deltaY%
 \fi%
 \multiply\deltax by 500%
 \multiply\deltay by 500%
 \xpos\deltax \multiply \xpos by 3 \divide\xpos by \deltaX%
 \Xpos\deltax \multiply \Xpos by \deltaX \divide \Xpos by \Xend%
 \advance \xpos by \Xpos%
 \ypos\deltay \multiply \ypos by 3 \divide\ypos by \deltaX%
 \Ypos\deltay \multiply \Ypos by \deltaX \divide \Ypos by \Xend%
 \advance \ypos by \Ypos%
 \xy \ar@{=>}(\xpos,\ypos) \endxy%
}\ignorespaces}%
\def\iiixiiipppppp(#1,#2)|#3|/#4/<#5>#6<#7>[#8;#9]{%
 \xpos#1\ypos#2\relax%
 \def\next|##1##2##3##4##5##6##7|{\def\xa{##1}\def\xb{##2}%
 \def\xc{##3}\def\xd{##4}\def\xe{##5}\def\xf{##6}\nextt|##7|}%
 \def\nextt|##1##2##3##4##5##6|{\def\xg{##1}\def\xh{##2}%
 \def\xi{##3}\def\xj{##4}\def\xk{##5}\def\xl{##6}}%
 \next|#3|%
 \def\next<##1,##2>{\deltax##1\deltay##2}%
 \next<#5>%
 \def\next<##1,##2>{\deltaX##1\deltaY##2}%
 \next<#7>%
 \def\next##1{\topw##1\relax%
 \ifodd\topw \def\zl{}\else\def\zl{\relax}\fi \divide\topw by 2%
 \ifodd\topw \def\zk{}\else\def\zk{\relax}\fi \divide\topw by 2%
 \ifodd\topw \def\zj{}\else\def\zj{\relax}\fi \divide\topw by 2%
 \ifodd\topw \def\zi{}\else\def\zi{\relax}\fi \divide\topw by 2%
 \ifodd\topw \def\zh{}\else\def\zh{\relax}\fi \divide\topw by 2%
 \ifodd\topw \def\zg{}\else\def\zg{\relax}\fi \divide\topw by 2%
 \ifodd\topw \def\zf{}\else\def\zf{\relax}\fi \divide\topw by 2%
 \ifodd\topw \def\ze{}\else\def\ze{\relax}\fi \divide\topw by 2%
 \ifodd\topw \def\zd{}\else\def\zd{\relax}\fi \divide\topw by 2%
 \ifodd\topw \def\zc{}\else\def\zc{\relax}\fi \divide\topw by 2%
 \ifodd\topw \def\zb{}\else\def\zb{\relax}\fi \divide\topw by 2%
 \ifodd\topw \def\za{}\else\def\za{\relax}\fi}%
 \next{#6}%
 \def\next[##1`##2`##3`##4`##5`##6`##7`##8`##9]{%
 \def\nodea{##1}\def\nodeb{##2}\def\nodec{##3}%
 \def\noded{##4}\def\nodee{##5}\def\nodef{##6}%
 \def\nodeg{##7}\def\nodeh{##8}\def\nodei{##9}}%
 \next[#8]%
 \def\next[##1`##2`##3`##4`##5`##6`##7]{%
 \def\labela{##1}\def\labelb{##2}\def\labelc{##3}%
 \def\labeld{##4}\def\labele{##5}\def\labelf{##6}\nextt[##7]}%
 \def\nextt[##1`##2`##3`##4`##5`##6]{%
 \def\labelg{##1}\def\labelh{##2}\def\labeli{##3}%
 \def\labelj{##4}\def\labelk{##5}\def\labell{##6}}%
 \next[#9]%
 \def\next/##1`##2`##3`##4`##5`##6`##7/{%
\morphism(\xpos,\ypos)|\xe|/{##5}/<\deltax,0>[\nodeg`\nodeh;\labele]%
 \ifx\zi\empty\relax \morphism(\xpos,\ypos)||/<-/<-\deltaX,0>[\nodeg`0;]\fi%
 \ifx\zd\empty\relax \morphism(\xpos,\ypos)||<0,-\deltaY>[\nodeg`0;]\fi%
 \advance\xpos by \deltax%
 \morphism(\xpos,\ypos)|\xf|/{##6}/<\deltax,0>[\nodeh`\nodei;\labelf]%
 \ifx\ze\empty\relax \morphism(\xpos,\ypos)||<0,-\deltaY>[\nodeh`0;]\fi%
 \advance\xpos by \deltax%
 \ifx\zf\empty\relax \morphism(\xpos,\ypos)||<0,-\deltaY>[\nodei`0;]\fi%
 \ifx\zl\empty\relax \morphism(\xpos,\ypos)||<\deltaX,0>[\nodei`0;]\fi%
 \advance\ypos by \deltay%
 \ifx\zk\empty\relax \morphism(\xpos,\ypos)||<\deltaX,0>[\nodef`0;]\fi%
 \advance\xpos by -\deltax%
 \morphism(\xpos,\ypos)|\xd|/{##4}/<\deltax,0>[\nodee`\nodef;\labeld]%
 \advance\xpos by -\deltax%
 \morphism(\xpos,\ypos)|\xc|/{##3}/<\deltax,0>[\noded`\nodee;\labelc]%
 \ifx\zh\empty\relax \morphism(\xpos,\ypos)||/<-/<-\deltaX,0>[\noded`0;]\fi%
 \advance\ypos by \deltay%
 \morphism(\xpos,\ypos)|\xa|/{##1}/<\deltax,0>[\nodea`\nodeb;\labela]%
 \ifx\zg\empty\relax \morphism(\xpos,\ypos)||/<-/<-\deltaX,0>[\nodea`0;]\fi%
 \ifx\za\empty\relax \morphism(\xpos,\ypos)||/<-/<0,\deltaY>[\nodea`0;]\fi%
 \advance\xpos by \deltax%
 \morphism(\xpos,\ypos)|\xb|/{##2}/<\deltax,0>[\nodeb`\nodec;\labelb]%
 \ifx\zb\empty\relax \morphism(\xpos,\ypos)||/<-/<0,\deltaY>[\nodeb`0;]\fi%
 \advance\xpos by \deltax%
 \ifx\zc\empty\relax \morphism(\xpos,\ypos)||/<-/<0,\deltaY>[\nodec`0;]\fi%
 \ifx\zj\empty\relax \morphism(\xpos,\ypos)||<\deltaX,0>[\nodec`0;]\fi%
 \nextt/##7/}%
 \def\nextt/##1`##2`##3`##4`##5`##6/{%
 \morphism(\xpos,\ypos)|\xi|/{##3}/<0,-\deltay>[\nodec`\nodef;\labeli]%
 \advance\xpos by -\deltax%
 \morphism(\xpos,\ypos)|\xh|/{##2}/<0,-\deltay>[\nodeb`\nodee;\labelh]%
 \advance\xpos by -\deltax%
 \morphism(\xpos,\ypos)|\xg|/{##1}/<0,-\deltay>[\nodea`\noded;\labelg]%
 \advance\ypos by -\deltay%
 \morphism(\xpos,\ypos)|\xj|/{##4}/<0,-\deltay>[\noded`\nodeg;\labelj]%
 \advance\xpos by \deltax%
 \morphism(\xpos,\ypos)|\xk|/{##5}/<0,-\deltay>[\nodee`\nodeh;\labelk]%
 \advance\xpos by \deltax%
 \morphism(\xpos,\ypos)|\xl|/{##6}/<0,-\deltay>[\nodef`\nodei;\labell]}%
 \next/#4/\ignorespaces}%
\def\iiixiiip(#1){\ifnextchar|{\iiixiiipp(#1)}%
  {\iiixiiipp(#1)|aammbblmrlmr|}}%
\def\iiixiiipp(#1)|#2|{\ifnextchar/{\iiixiiippp(#1)|#2|}%
    {\iiixiiippp(#1)|#2|/>`>`>`>`>`>`>`>`>`>`>`>/}}%
\def\iiixiiippp(#1)|#2|/#3/{%
    \ifnextchar<{\iiixiiipppp(#1)|#2|/#3/}%
    {\iiixiiipppp(#1)|#2|/#3/<\default,\default>}}%
\def\iiixiiipppp(#1)|#2|/#3/<#4>{\ifnextchar[{\iiixiiippppp(#1)|#2|/#3/%
   <#4>0<0,0>}{\iiixiiippppp(#1)|#2|/#3/<#4>}}%
\def\iiixiiippppp(#1)|#2|/#3/<#4>#5{\ifnextchar<%
   {\iiixiiipppppp(#1)|#2|/#3/<#4>{#5}}%
   {\iiixiiipppppp(#1)|#2|/#3/<#4>{#5}<400,400>}}%
\def\iiixiipppppp(#1,#2)|#3|/#4/<#5>#6<#7>[#8;#9]{%
 \xpos#1\ypos#2\relax%
 \def\next|##1##2##3##4##5##6##7|{\def\xa{##1}\def\xb{##2}%
 \def\xc{##3}\def\xd{##4}\def\xe{##5}\def\xf{##6}\def\xg{##7}}%
 \next|#3|%
 \def\next<##1,##2>{\deltax##1\deltay##2}%
 \next<#5>%
 \deltaX#7%
 \topw#6%
 \def\next{%
 \ifodd\topw \def\za{}\else\def\za{\relax}\fi \divide\topw by 2%
 \ifodd\topw \def\zb{}\else\def\zb{\relax}\fi \divide\topw by 2%
 \ifodd\topw \def\zc{}\else\def\zc{\relax}\fi \divide\topw by 2%
 \ifodd\topw \def\zd{}\else\def\zd{\relax}\fi}%
 \next%
 \def\next[##1`##2`##3`##4`##5`##6]{%
 \def\nodea{##1}\def\nodeb{##2}\def\nodec{##3}%
 \def\noded{##4}\def\nodee{##5}\def\nodef{##6}}%
 \next[#8]%
 \def\next[##1`##2`##3`##4`##5`##6`##7]{%
 \def\labela{##1}\def\labelb{##2}\def\labelc{##3}%
 \def\labeld{##4}\def\labele{##5}\def\labelf{##6}\def\labelg{##7}}%
 \next[#9]%
 \def\next/##1`##2`##3`##4`##5`##6`##7/{%
 \ifx\zc\empty\relax\morphism(\xpos,\ypos)<\deltaX,0>[0`\noded;]\fi%
 \advance\xpos by\deltaX%
 \morphism(\xpos,\ypos)|\xc|/##3/<\deltax,0>[\noded`\nodee;\labelc]%
 \advance\xpos by \deltax%
 \morphism(\xpos,\ypos)|\xd|/##4/<\deltax,0>[\nodee`\nodef;\labeld]%
 \advance\xpos by \deltax%
 \ifx\zd\empty\relax  \morphism(\xpos,\ypos)<\deltaX,0>[\nodef`0;]\fi%
 \advance\xpos by -\deltaX  \advance\xpos by -\deltax%
 \advance\xpos by -\deltax  \advance\ypos by \deltay%
 \ifx\za\empty\relax\morphism(\xpos,\ypos)<\deltaX,0>[0`\nodea;]\fi%
 \advance\xpos by\deltaX%
 \morphism(\xpos,\ypos)|\xa|/##1/<\deltax,0>[\nodea`\nodeb;\labela]%
 \morphism(\xpos,\ypos)|\xe|/##5/<0,-\deltay>[\nodea`\noded;\labele]%
 \advance\xpos by \deltax%
 \morphism(\xpos,\ypos)|\xb|/##2/<\deltax,0>[\nodeb`\nodec;\labelb]%
 \morphism(\xpos,\ypos)|\xf|/##6/<0,-\deltay>[\nodeb`\nodee;\labelf]%
 \advance\xpos by \deltax%
 \morphism(\xpos,\ypos)|\xg|/##7/<0,-\deltay>[\nodec`\nodef;\labelg]%
 \ifx\zb\empty\relax \morphism(\xpos,\ypos)<\deltaX,0>[\nodec`0;]\fi}%
 \next/#4/\ignorespaces}%
\def\iiixiip(#1){\ifnextchar|{\iiixiipp(#1)}%
  {\iiixiipp(#1)|aabblmr|}}%
\def\iiixiipp(#1)|#2|{\ifnextchar/{\iiixiippp(#1)|#2|}%
    {\iiixiippp(#1)|#2|/>`>`>`>`>`>`>/}}%
\def\iiixiippp(#1)|#2|/#3/{%
    \ifnextchar<{\iiixiipppp(#1)|#2|/#3/}%
    {\iiixiipppp(#1)|#2|/#3/<\default,\default>}}%
\def\iiixiipppp(#1)|#2|/#3/<#4>{\ifnextchar[{\iiixiippppp(#1)|#2|/#3/%
   <#4>{0}<0>}{\iiixiippppp(#1)|#2|/#3/<#4>}}%
\def\iiixiippppp(#1)|#2|/#3/<#4>#5{\ifnextchar<%
   {\iiixiipppppp(#1)|#2|/#3/<#4>{#5}}%
   {\iiixiipppppp(#1)|#2|/#3/<#4>{#5}<0>}}%
\def\node#1(#2,#3)[#4]{%
\expandafter\gdef\csname x@#1\endcsname{#2}%
\expandafter\gdef\csname y@#1\endcsname{#3}%
\expandafter\gdef\csname ob@#1\endcsname{#4}%
\ignorespaces}%
\def\arrowp|#1|{\ifnextchar/{\arrowpp|#1|}{\arrowpp|#1|/>/}}%
\def\arrowpp|#1|/#2/[#3`#4;#5]{%
\xfinish=\csname x@#4\endcsname%
\yfinish=\csname y@#4\endcsname%
\advance\xfinish by -\csname x@#3\endcsname%
\advance\yfinish by -\csname y@#3\endcsname%
\morphism(\csname x@#3\endcsname,\csname y@#3\endcsname)|#1|/#2/%
<\xfinish,\yfinish>[\csname ob@#3\endcsname`\csname ob@#4\endcsname;#5]%
}%
\def\Loop(#1,#2)#3(#4,#5){\POS(#1,#2)*+!!<0ex,\axis>{#3}\ar@(#4,#5)}%
\def\iloop#1(#2,#3){\xy\Loop(0,0)#1(#2,#3)\endxy}%
     \let \PATHafterPOS\PATHafterPOS@default%
     \let \arsavedPATHafterPOS@@\relax%
     \let\afterar@@\relax%
\xydef@\endxyobj{\if\inxy@\else\xyerror@{Unexpected \string\endxy}{}\fi%
>  \relax%
>   \dimen@=\Y@max \advance\dimen@-\Y@min%
>   \ifdim\dimen@<\z@ \dimen@=\z@ \Y@min=\z@ \Y@max=\z@ \fi%
>   \dimen@=\X@max \advance\dimen@-\X@min%
>   \ifdim\dimen@<\z@ \dimen@=\z@ \X@min=\z@ \X@max=\z@ \fi%
>   \edef\tmp@{\egroup%
>     \setboxz@h{\kern-\the\X@min \boxz@}%
>     \ht\z@=\the\Y@max \dp\z@=-\the\Y@min \wdz@=\the\dimen@%
>     \noexpand\maybeunraise@ \raise\dimen@\boxz@%
>     \noexpand\recoverXyStyle@ \egroup \noexpand\xy@end%
>     \U@c=\the\Y@max \advance\U@c-\the\Y@c%
>     \D@c=-\the\Y@min \advance\D@c\the\Y@c%
>     \L@c=-\the\X@min  \advance\L@c\the\X@c%
>     \R@c=\the\X@max  \advance\R@c-\the\X@c%
>    }\tmp@}%
\gdef\xymerge@MinMax{}%
\xydef@\twocell{\hbox\bgroup\xysave@MinMax\@twocell}%
\xydef@\uppertwocell{\hbox\bgroup\xysave@MinMax\@uppertwocell}%
\xydef@\lowertwocell{\hbox\bgroup\xysave@MinMax\@lowertwocell}%
\xydef@\compositemap{\hbox\bgroup\xysave@MinMax\@compositemap}%
\xydef@\xysave@MinMax{\xdef\xymerge@MinMax{%
   \noexpand\ifdim\X@max<\the\X@max \X@max=\the\X@max\noexpand\fi%
   \noexpand\ifdim\X@min>\the\X@min \X@min=\the\X@min\noexpand\fi%
   \noexpand\ifdim\Y@max<\the\Y@max \Y@max=\the\Y@max\noexpand\fi%
   \noexpand\ifdim\Y@min>\the\Y@min \Y@min=\the\Y@min\noexpand\fi%
  }}%
\xydef@\drop@Twocell{\boxz@ \xymerge@MinMax}%
\xydef@\twocell@DONE{%
  \edef\tmp@{\egroup%
   \X@min=\the\X@min \X@max=\the\X@max%
   \Y@min=\the\Y@min \Y@max=\the\Y@max}\tmp@%
  \L@c=\X@c \advance\L@c-\X@min \R@c=\X@max \advance\R@c-\X@c%
  \D@c=\Y@c \advance\D@c-\Y@min \U@c=\Y@max \advance\U@c-\Y@c%
  \ht\z@=\U@c \dp\z@=\D@c \dimen@=\L@c \advance\dimen@\R@c \wdz@=\dimen@%
  \computeLeftUpness@%
  \setboxz@h{\kern-\X@p \raise-\Y@c\boxz@ }%
  \dimen@=\L@c \advance\dimen@\R@c \wdz@=\dimen@ \ht\z@=\U@c \dp\z@=\D@c%
  \Edge@c={\rectangleEdge}\Invisible@false \Hidden@false%
  \edef\Drop@@{\noexpand\drop@Twocell%
   \noexpand\def\noexpand\Leftness@{\Leftness@}%
   \noexpand\def\noexpand\Upness@{\Upness@}}%
  \edef\Connect@@{\noexpand\connect@Twocell%
   \noexpand\ifdim\X@max<\the\X@max \X@max=\the\X@max\noexpand\fi%
   \noexpand\ifdim\X@min>\the\X@min \X@min=\the\X@min\noexpand\fi%
   \noexpand\ifdim\Y@max<\the\Y@max \Y@max=\the\Y@max\noexpand\fi%
   \noexpand\ifdim\Y@min>\the\Y@min \Y@min=\the\Y@min\noexpand\fi }%
  \xymerge@MinMax%
}%
 \theoremstyle{plain}
 \newtheorem{thm}{Theorem}[section]
 \newtheorem{cor}[thm]{Corollary}
 \newtheorem{lem}[thm]{Lemma}
 \newtheorem{prop}[thm]{Proposition}
\theoremstyle{definition}
 \newtheorem{defn}[thm]{Definition}
\theoremstyle{remark}
 \newtheorem{rem}[thm]{Remark}
 \newtheorem{nota}[thm]{Notation}
 \newtheorem{conv}[thm]{Convention}
 \numberwithin{equation}{section}
\DeclareMathOperator{\VF}{VF} \DeclareMathOperator{\ACVF}{ACVF}
\DeclareMathOperator{\RV}{RV}
 \DeclareMathOperator{\ran}{ran}
 \DeclareMathOperator{\dom}{dom}
 \DeclareMathOperator{\id}{id}
 \DeclareMathOperator{\lh}{lh}
 \DeclareMathOperator{\acl}{acl}
 \DeclareMathOperator{\dcl}{dcl}
 \DeclareMathOperator{\pr}{pr}
 \DeclareMathOperator{\alg}{ac}
\DeclareMathOperator{\jcb}{Jcb}
\DeclareMathOperator{\K}{\overline{K}}
\def\Xint#1{\mathchoice
{\XXint\displaystyle\textstyle{#1}}%
{\XXint\textstyle\scriptstyle{#1}}%
{\XXint\scriptstyle\scriptscriptstyle{#1}}%
{\XXint\scriptscriptstyle\scriptscriptstyle{#1}}%
\!\int}
\def\XXint#1#2#3{{\setbox0=\hbox{$#1{#2#3}{\int}$}
\vcenter{\hbox{$#2#3$}}\kern-.5\wd0}}
\newcommand{\Z}{\mathds{Z}}
\newcommand{\N}{\mathds{N}}
\newcommand{\p}{$p$\nobreakdash}
\newcommand{\omin}{$o$\nobreakdash}
\newcommand{\cmin}{$C$\nobreakdash}
\newcommand{\gA}{\mathfrak{A}}
\newcommand{\gB}{\mathfrak{B}}
\newcommand{\gC}{\mathfrak{C}}
\newcommand{\gM}{\mathfrak{M}}
\newcommand{\ga}{\mathfrak{a}}
\newcommand{\gb}{\mathfrak{b}}
\newcommand{\gc}{\mathfrak{c}}
\newcommand{\gd}{\mathfrak{d}}
\newcommand{\gh}{\mathfrak{h}}
\newcommand{\gl}{\mathfrak{l}}
\newcommand{\gm}{\mathfrak{m}}
\newcommand{\go}{\mathfrak{o}}
\newcommand{\gp}{\mathfrak{p}}
\newcommand{\gq}{\mathfrak{q}}
\newcommand{\gx}{\mathfrak{x}}
\newcommand{\gy}{\mathfrak{y}}
\newcommand{\gz}{\mathfrak{z}}
\newcommand{\0}{\emptyset}
 \newcommand{\abs}[1]{\left\vert#1\right\vert}
 \newcommand{\set}[1]{\left\{#1\right\}}
 \newcommand{\seq}[1]{\left<#1\right>}
 \newcommand{\wh}[1]{\widehat{#1}}
 \newcommand{\lan}[1]{\mathcal{L}_{\textup{#1}}}
\newcommand{\mdl}[1]{\mathcal{#1}}  
\newcommand{\bb}[1]{\mathbb{#1}}
\newcommand{\rest}{\upharpoonright}
\newcommand{\lbar}{\vec}
\newcommand{\fun}{\longrightarrow}
\newcommand{\efun}{\longmapsto}
\newcommand{\sub}{\subseteq}
\newcommand{\mi}{\smallsetminus}
\newcommand{\colim}[1]{\underset{#1}{\text{colim}}}  
\newcommand{\la}{\langle}
\newcommand{\ra}{\rangle}
\DeclareMathOperator{\mVF}{\mu \! \VF}
\DeclareMathOperator{\mgVF}{\mu_{\Gamma} \! \VF}
\DeclareMathOperator{\mRV}{\mu \! \RV}
\DeclareMathOperator{\mgRV}{\mu_{\Gamma} \! \RV}
\DeclareMathOperator{\rv}{rv}
\DeclareMathOperator{\vv}{val}
\DeclareMathOperator{\gsk}{\mathbf{K}_+}
\DeclareMathOperator{\ggk}{\mathbf{K}}
\DeclareMathOperator{\ob}{Ob}
\DeclareMathOperator{\fn}{FN}
\DeclareMathOperator{\fib}{fib}
\DeclareMathOperator{\isp}{I_{sp}}
\DeclareMathOperator{\misp}{\mu I_{sp}}
\DeclareMathOperator{\ispt}{I_{sp}^{\times}}
\DeclareMathOperator{\rad}{rad}
\DeclareMathOperator{\vcr}{vcr}
\DeclareMathOperator{\vrv}{vrv}
\DeclareMathOperator{\RVH}{RVH}
\DeclareMathOperator{\can}{\mathbf{c}}
\DeclareMathOperator{\pvf}{pvf}
\DeclareMathOperator{\prv}{prv}
\DeclareMathOperator{\KR}{\mathbf{K} \mathds R}
\begin{document}

\title[Integration in ACVF]{Integration in algebraically closed valued fields}

\author{Yimu Yin}

\address{Department of Mathematics, University of Pittsburgh, 301 Thackeray Hall, Pittsburgh, PA  15260}

\email{yimuyin@pitt.edu}

\begin{abstract}
The first two steps of the construction of motivic integration in the fundamental work of Hrushovski and
Kazhdan~\cite{hrushovski:kazhdan:integration:vf} have been presented in~\cite{Yin:special:trans}. In this paper we present the final third step. As in~\cite{Yin:special:trans}, we limit our attention to the theory of algebraically closed valued fields of pure characteristic $0$ expanded by a $(\VF,
\Gamma)$-generated substructure $S$ in the language $\lan{RV}$. A canonical description of the kernel of the homomorphism $\bb L$ is obtained.
\end{abstract}

\maketitle

\tableofcontents

\section{Introduction}

To describe in a few words how motivic integration is different from classical integration it seems best to begin by pointing out that the ring that provides values for integrals is not the real field but a Grothendieck ring. The latter is traditionally constructed from equivalence classes of algebraic varieties and, more generally in the model-theoretic setting, from equivalence classes of definable subsets. Topological tools that are essential to many classical constructions are no longer available; instead, since it was first introduced by M. Kontsevich in 1995, techniques from first-order model theory of definable sets underlie much of the development of this new kind of integration. In fact, at risk of being overly simple-minded, one may think of motivic integration as classical integration with the topological concepts of ``continuity'', ``convergence'', etc.\ replaced everywhere by the model-theoretic concept of ``definability''.

To be sure, the class of definable integrals is conceptually narrower than the class of integrals that can be more or less dealt with classically. However, there are many reasons why the motivic approach to integration will play an increasingly important role. We mention two here.

Firstly, the progress in model theory in the last few decades suggests that many natural mathematical properties are subject to first-order treatment. In our context, given the fact that some very complicated integral identities are already motivic (see, for example, \cite{cluckers:cunningham:gordon:spice, cluckers:hales:loeser:transfer, cunning:hales:good}), it is reasonable to expect that many other important kinds of integrals are definable in some first-order languages and hence may be studied motivically. We note that, in their recent paper~\cite{hru:kazh:2009}, Hrushovski and Kazhdan have developed a partially first-order method to study adelic structures over curves and, in particular, have obtained a global Poisson summation formula.

Secondly, if one is more interested in the structure of a space of functions (for example, functional equations) than actual computation of functions, then constantly worrying about things such as convergence seems to be an unnecessary burden. By this we just mean that there is no need to insist on assigning ``numerical values'' to integrals, especially when it is not possible, and sometimes working with ``geometrical values'' is more effective. Definable integrals are of a more geometrical nature and are better behaved, at least before specializing to local fields. Some pathological phenomena afforded by point-set topology are thus avoided. For example, while classically it is possible that two iterative integrals of a function exist but are not equal, this cannot happen to definable integrals. This is our Fubini's theorem (Theorem~\ref{fubini}).

The Hrushovski-Kazhdan integration theory~\cite{hrushovski:kazhdan:integration:vf} is a major development in the theory of motivic integration. The fundamental idea is to construct homomorphisms between various Grothendieck rings associated with the first-order theory $\ACVF_S(0,0)$ of algebraically closed valued fields as naturally formulated in the language $\lan{RV}$. This construction has three main steps, which is described in the introduction of~\cite{Yin:special:trans}. For clarity we shall briefly recall what they are here. Let $\mVF_*$ and $\mRV[*]$ be two
categories of definable sets with volume forms that are respectively associated with the $\VF$-sort and the $\RV$-sort of $\lan{RV}$ (see~\cite[Section~10]{Yin:special:trans}). To construct a canonical
homomorphism from the Grothendieck semigroup $\gsk \mVF_*$ to the Grothendieck semigroup $\gsk \mRV[*] / \misp$, where $\misp$ is a suitable semigroup congruence relation, we proceed as follows:
\begin{itemize}
 \item\emph{Step~1.} There is a natural lifting map $\bb L$ from the set of objects of $\mRV[*]$ into the set of objects of $\mVF_*$. Using only special bijections, we show that $\bb L$ hits every isomorphism class of $\mVF_*$. The reason that we want to use only special bijections is that they are essentially compositions of additive translations and hence are measure-preserving. This is crucial for Step~3 below.

 \item\emph{Step~2.} We show that $\bb L$ induces a semigroup homomorphism from $\gsk \mRV[*]$ into $\gsk \mVF_*$, which is also denoted by $\bb L$.

 \item\emph{Step~3.} We obtain a precise description of the semigroup congruence relation on $\gsk \mRV[*]$ induced by $\bb L$ using the basic notion of a blowup of an object in $\mRV[*]$. The basic idea is that, for any objects $\mathbf{U}_1, \mathbf{U}_2$ in $\mRV[*]$, there are isomorphic iterated blowups $\mathbf{U}_1^{\sharp}$, $\mathbf{U}_2^{\sharp}$ of $\mathbf{U}_1$, $\mathbf{U}_2$ if and only if $\bb L(\mathbf{U}_1)$, $\bb L(\mathbf{U}_2)$ are isomorphic. The ``if'' direction essentially contains a form of Fubini's Theorem and is the most technically involved part of the construction.
\end{itemize}
When the Grothendieck semigroups are formally groupified, the inverse of $\bb L$ thus obtained is recast as a ring homomorphism (Theorem~\ref{theorem:integration:1:vol}). This is understood as a motivic integration, since Fubini's theorem (Theorem~\ref{fubini}) and the change of variables formula (Theorem~\ref{change:variables}) hold.

The first two steps are presented in~\cite{Yin:special:trans}. In this paper we present the third step. We mention again what has been said in the introduction of~\cite{Yin:special:trans} that, conceptually, the Hrushovski-Kazhdan construction of motivic integration is completed by the first two steps. Besides the obvious benefit of a deeper understanding, the main reason that we would like to carry out the third step is to facilitate computation in future applications.

The sections are organized as follows. Throughout this paper we shall use the notation and terminology of~\cite{Yin:QE:ACVF:min, Yin:special:trans}, some of which are recalled in Section~\ref{section:review}, where we also review the results in~\cite{Yin:QE:ACVF:min, Yin:special:trans} that shall be cited later. In Section~\ref{section:atoms} we prove some structural properties concerning the basic geography of definable sets in $\ACVF_S(0, 0)$. The main goal is to show that definable bijections between subsets of $\VF$ have the open-to-open property over a finite partition into definable subsets (Proposition~\ref{partition:open:to:open}). The setting of Section~\ref{section:atoms} is a bit different from the corresponding sections in~\cite{Yin:QE:ACVF:min, Yin:special:trans}: we work
at the level of types with imaginary parameters, namely balls, allowed.

The notion of a $2$-cell is introduced in Section~\ref{section:2unit}, which corresponds to the notion of a bicell in \cite{cluckers:loeser:constructible:motivic:functions}. This notion may look strange and is, perhaps, only of technical
interest. It arises when we try to prove some form of Fubini's
theorem, such as Lemma~\ref{contraction:perm:pair:isp}. The
difficulty is that, although, using \cmin-minimality, integrating definable sets of
$\VF$-dimension $1$ is very functorial (see Lemma~\ref{simul:special:dim:1}), we are unable to extend the construction to higher $\VF$-dimensions. This is the concern of \cite[Question~7.9]{hrushovski:kazhdan:integration:vf}. It has
also occurred in \cite{cluckers:loeser:constructible:motivic:functions} and may be
traced back to \cite{dries:1998}; see
\cite[Section~1.7]{cluckers:loeser:constructible:motivic:functions}.
Anyway, in this situation, the natural strategy of integrating
definable sets of higher $\VF$-dimensions is to use the result for
$\VF$-dimension $1$ and proceed with one $\VF$-sort variable at a time. As in the classical theory of integration, this strategy requires some form of Fubini's theorem: for a well-behaved integration, an integral should give the same value
when it is evaluated along different orders of variables. By induction, this problem is immediately reduced to the case of two variables. A $2$-cell is a definable
subset of $\VF^2$ with certain symmetrical geometrical structure that satisfies this Fubini type of requirement. Now the idea is that, if we can find a definable partition for every definable subset such that each piece is a $2$-cell indexed by some $\RV$-sort parameters, then, by compactness, every definable
subset satisfies the Fubini type of requirement. This kind of
partition is achieved in Lemma~\ref{decom:into:2:units}.

The key result of Section~\ref{section:contracting}, Lemma~\ref{simul:special:dim:1}, says that, modulo special bijections, every definable bijection between two definable sets of $\VF$-dimension $1$ is equal to the lift of an isomorphism in the corresponding $\RV$-category. As has just been remarked above, it would be ideal to extend this result to definable sets of all $\VF$-dimensions. Failing that, we will have to go down a path that is full of undesirable technicalities (including the whole discussion on $2$-cells). We introduce the notion of a standard contraction, which gives rise to the Fubini type of problem described above; see Definition~\ref{defn:standard:contraction}.
Then in Lemma~\ref{2:unit:contracted} we show that an essential
part of Lemma~\ref{simul:special:dim:1} holds for $2$-cells, which
is good enough for the rest of the construction.

The task of finding a canonical description of the kernel of $\bb L$, that is, Step~3 above, is carried out in Section~\ref{section:kernel} for the categories without volume forms. We introduce the notion of a blowup and then show that the equivalence relation $\isp[*, \cdot]$ it induces on $\RV[*, \cdot]$ is indeed a semigroup congruence relation; see Definition~\ref{defn:blowup} and Lemma~\ref{isp:congruence}. The key result of this section is Proposition~\ref{kernel:L}, which says that $\isp[*, \cdot]$ is the congruence relation induced by the homomorphism $\bb L$. With this in hand, we can describe canonically the isomorphisms between the various Grothendieck semigroups given by the inversion of $\bb L$. Also, because we know how the kernel of $\bb L$ is generated, there is a very elegant injective ring homomorphism (Theorem~\ref{theorem:integration:1}).

In the last section we basically repeat the work in Section~\ref{section:kernel} for the categories with volume forms. Additional arguments are needed at a few places since we now work with stricter morphisms (measure-preserving maps), for example, see Lemma~\ref{elementary:blowups:preserves:iso:vol}. In the end we obtain Fubini's theorem and the change of variable formula.

We note that at the level of local fields there is another very general approach to motivic integration, namely the Cluckers-Loeser theory~\cite{cluckers:loeser:constructible:motivic:functions}; see~\cite{gordon:yaffe:2008} for an excellent exposition. However, in a future paper we shall show that the Hrushovski-Kazhdan theory and the Cluckers-Loeser theory are compatible (via specialization of the former to the latter) and hence draw the relieving conclusion that there is really just one theory of motivic integration.

\section{Preliminaries and review}\label{section:review}

The reader is referred to~\cite{Yin:QE:ACVF:min, Yin:special:trans} for notation and terminology. Some of them will be recalled in the discussion below.

In the language $\lan{RV}$, the two sorts $\VF$ and $\RV$ without the zero elements are respectively denoted by $\VF^{\times}$ and $\RV$, and $\RV \mi \set{\infty}$ is denoted by $\RV^{\times}$. The set $\{ x \in \RV: x > 1 \}$ is denoted by $\RV^{> 1}$.

Since a $\VF$-sort literal can be equivalently expressed as an $\RV$-literal, we may assume that an $\lan{RV}$-formula contains no $\VF$-sort literals at all. In particular, we may assume that every $\VF$-sort polynomial $F(\lbar X)$ in a formula $\phi$ occurs in the form $\rv(F(\lbar X))$. This understanding sometimes makes the discussion more streamlined. We say that $F(\lbar X)$ is an \emph{occurring polynomial} of $\phi$.

Except in Section~\ref{section:atoms}, as in~\cite{Yin:QE:ACVF:min, Yin:special:trans}, we shall work in a sufficiently saturated model $\gC$ of the theory $\ACVF_S(0, 0)$, where $S \sub \gC$ is a small substructure such that \emph{$\Gamma(S)$ is nontrivial}.

\begin{conv}
By a definable subset of $\gC$ we mean a $\0$-definable
subset in the theory $\ACVF_S(0, 0)$. If additional parameters are used in defining a subset then we shall spell them out explicitly if necessary. As in~\cite{Yin:special:trans}, elements in the imaginary sort $\Gamma$ are frequently used as parameters in formulas.
\end{conv}

The substructure generated by a subset $A$ is denoted by $\la A \ra$ or $\dcl(A)$. The model-theoretic algebraic closure of $A$ is denoted by $\acl(A)$, whereas the field-theoretic algebraic closure of $A$, if applicable, is denoted by $A^{\alg}$. A substructure $S$ is \emph{$\VF$-generated} if there is a subset $A \sub \VF(S)$ such that $S  = \la A \ra$; similarly for $(\VF, \RV)$-generated substructures, $(\VF, \Gamma)$-generated substructures, etc.

\begin{defn}
A subset $\gb$ of $\VF$ is an \emph{open ball} if there is a
$\gamma \in \Gamma$ and a $b \in \gb$ such that $a \in \gb$ if and
only if $\vv(a - b) > \gamma$. It is a \emph{closed ball} if $a
\in \gb$ if and only if $\vv(a - b) \geq \gamma$. It is an
\emph{$\rv$-ball} if $\gb = \rv^{-1}(t)$ for some $t \in \RV$. The
value $\gamma$ is the \emph{radius} of $\gb$, which is denoted as
$\rad (\gb)$. Each point in $\VF$ is a closed ball of radius $\infty$ and $\VF$ is a clopen ball of radius $- \infty$.

If $\vv$ is constant on $\gb$ --- that is, $\gb$ is contained in an $\rv$-ball --- then $\vv(\gb)$ is the \emph{valuative center} of $\gb$; if $\vv$ is not constant on
$\gb$, that is, $0 \in \gb$, then the \emph{valuative center} of
$\gb$ is $\infty$. The valuative center of $\gb$ is denoted by
$\vcr(\gb)$.

A subset $\gp \sub \VF^n \times \RV^m$ is an (\emph{open, closed, $\rv$-}) \emph{polydisc} if it is of the form $(\prod_{i \leq n} \gb_i) \times \set{\lbar t}$, where each $\gb_i$ is an (open, closed, $\rv$-) ball and $\lbar t \in \RV^m$.
If $\gp$ is a polydisc then the \emph{radius} of $\gp$, denoted as $\rad(\gp)$,
is $\min \set{\rad(\gb_i) : i \leq n}$. The open and closed polydiscs centered at a sequence of elements $\lbar a = (a_1, \ldots, a_n) \in \VF^n$ with radii $\lbar \gamma = (\gamma_1, \ldots, \gamma_n) \in \Gamma^n$ are respectively denoted as $\go(\lbar a, \lbar \gamma)$ and $\gc(\lbar a, \lbar \gamma)$.

An $\rv$-polydisc $\rv^{-1}(t_1, \ldots, t_n) \times \{ \lbar s \}$ is \emph{degenerate} if $t_i = \infty$ for some $i$.
\end{defn}

\begin{defn}
A subset $\gd$ of $\VF$ is a \emph{punctured} (\emph{open, closed,
$\rv$-}) \emph{ball} if $\gd = \gb \mi \bigcup_{i=1}^n \gh_i$, where
$\gb$ is an (open, closed, $\rv$-) ball, $\gh_i, \ldots, \gh_n$
are disjoint balls, and $\gh_i, \ldots, \gh_n \sub \gb$. Each
$\gh_i$ is a \emph{hole} of $\gd$. The \emph{radius} and the
\emph{valuative center} of $\gd$ are those of $\gb$. A punctured closed ball $\gb$ with a single hole $\gh$ such that $\gh$ is a maximal open subball of $\gb$ is called a \emph{thin annulus}.
\end{defn}

For example, an element $\gamma \in \Gamma$ may be regarded as a thin annulus: it
is the punctured closed ball centered at $0$ with radius $\gamma$ and with the maximal open subball containing $0$ removed.

For brevity, we introduce the following peculiar terminology: by ``a punctured ball $\gb$'' we shall mean that $\gb$ is a ball that is possibly not punctured at all (or punctured by the ``empty ball''). So by ``a ball'' we shall always mean an unpunctured ball.

\begin{rem}\label{rem:K:aff}
Given two balls $\ga$, $\gb$, we write $\ga - \gb$ for the subset $\{a - b : a \in \ga \text{ and } b \in \gb \}$. Suppose that $\ga$, $\gb$ are disjoint maximal open subballs of a closed ball $\gc$. Then clearly $\ga - \gb$ is an $\rv$-ball $\rv^{-1}(t)$ with $\vrv(t) = \rad(\ga)$. This means that the collection of maximal open subballs of $\gc$ admits a $\K$-affine structure.
\end{rem}

In Section~\ref{section:atoms} we shall use balls as parameters as well. We could work in the traditional expansion $\gC^{eq}$ of $\gC$. But a much simpler expansion $\gC^{\bullet}$ suffices: it has only one additional sort that contains all the open balls and all the closed balls whose valuative radii are in $\Gamma(S)$. This means that, when we work in $\gC^{\bullet}$, the underlying substructure $S$ actually contains balls as imaginary elements. This expansion can help reduce the technical complexity of our discussion. However, as is the case with $\Gamma$, it is conceptually inessential since, for the purposes of this paper, all allusions to balls as imaginary elements may be eliminated in favor of objects already definable in $\gC$.

\begin{nota}
The following notational device is used in $\gC^{\bullet}$: for a ball $\gb \sub \VF$, if there is a corresponding imaginary element then we shall denote it by $\dot \gb$.
\end{nota}

We shall adopt~\cite[Convention~4.20]{Yin:QE:ACVF:min}: Since definably bijective subsets are to be identified, for a subset $A$, we shall tacitly substitute its canonical image $\can(A)$ for it in the discussion if it is necessary or is just more convenient. This should happen mainly when special bijections are performed.

\begin{nota}\label{indexing}
We recall the notational conventions concerning coordinate projection maps, which are ubiquitous in this paper. Let $A \sub \VF^n \times \RV^m$. For any $n \in \N$, let $I_n = \set{1, \ldots, n}$. First of all, the $\VF$-coordinates and the
$\RV$-coordinates of $A$ are indexed separately. It is cumbersome
to actually distinguish them notationally, so we just assume that
the set of the $\VF$-indices is $I_n$ and the set of the $\RV$-indices is $I_m$. This should never cause confusion in context.

Let $I = I_n \uplus I_m$, $E \sub I$, and $\widetilde{E} = I \mi E$. If $E$ is a
singleton $\set{i}$ then we always write $E$ as $i$ and
$\widetilde{E}$ as $\widetilde{i}$. We write $\pr_E(A)$ for the projection
of $A$ to the coordinates in $E$. For any $\lbar a \in
\pr_{\widetilde{E}} (A)$, the fiber $\{\lbar b : (\lbar b, \lbar a)
\in A \}$ is denoted by $\fib(A, \lbar a)$. Note that we shall often tacitly identify the two subsets $\fib(A, \lbar a)$ and $\fib(A, \lbar a) \times \set{\lbar
a}$. Also, it is often more convenient to use simple descriptions
as subscripts. For example, if $E = \set{1, \ldots, k}$ etc.\ then
we may write $\pr_{\leq k}$ etc. If $E$ contains exactly the
$\VF$-indices (respectively $\RV$-indices) then $\pr_E$ is written
as $\pvf$ (respectively $\prv$). If $E'$ is a subset of the coordinates of $\pr_E (A)$ then the composition $\pr_{E'} \circ \pr_E$ is written as $\pr_{E, E'}$. Naturally
$\pr_{E'} \circ \pvf$ and $\pr_{E'} \circ \prv$ are written as $\pvf_{E'}$ and $\prv_{E'}$, respectively.
\end{nota}

Now, for convenience, we list some of the results in~\cite{Yin:QE:ACVF:min, Yin:special:trans} that shall be used in the sections below. We start with some basic structural properties.

\begin{lem}[{\cite[Lemma~4.11]{Yin:QE:ACVF:min}}]\label{function:dim1:decom:RV}
Let $A, B \sub \VF$ and $f : A \fun B$ a definable surjective function. Then there is a definable function $P : A \fun \RV^m$ such that, for each $\lbar t \in \ran(P)$, $f \rest P^{-1}(\lbar t)$ is either constant or injective.
\end{lem}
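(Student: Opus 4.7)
The plan is to reduce to the two extreme behaviours of $f$ (fibers either finite or entirely infinite) and treat them separately, then glue by a flag coordinate.

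First, I would invoke Lemma~\ref{function:dim:1:range:decom} to produce a definable partition $Y = Y_1 \sqcup Y_2$ in which $Y_1$ is finite and $f^{-1}(b)$ is infinite for every $b \in Y_1$, while $f$ restricted to $X_2 := f^{-1}(Y_2)$ is finite-to-one. Set $X_1 := f^{-1}(Y_1)$. On $X_1$ the strategy is to encode the value $f(x)$ by an $\RV$-tuple: since $Y_1$ is a finite subset of $\VF$, Lemma~\ref{finite:VF:project:RV} yields a definable injective auxiliary projection $h : Y_1 \fun \RV^{k_1}$, and I set $P_1(x) = h(f(x))$. For any $\lbar t \in h(Y_1)$, injectivity of $h$ forces $P_1^{-1}(\lbar t) = f^{-1}(b)$ for the unique $b$ with $h(b) = \lbar t$, so $f$ is constant on that fiber.

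For $X_2$ the goal is reversed: produce $\hat g : X_2 \fun \RV^{k_2}$ whose fibers each sit inside a single $f$-fiber, so that $f$ becomes injective on them. For each $b \in Y_2$, the fiber $f^{-1}(b)$ is a finite subset of $\VF$, so Lemma~\ref{finite:VF:project:RV} provides a $\seq{b}$-definable injective auxiliary projection $\hat g_b$ of $f^{-1}(b)$ into $\RV^{k_b}$. Now I would apply Lemma~\ref{compactness:injectivity} with $p = f\!\restriction\! X_2 : X_2 \fun Y_2$ and $\Phi(p)$ the class of $p$-functions whose target is $Y_2 \times \RV^k$ (any $k$); this class is $p$-closed by concatenation and padding with dummy $\RV$-coordinates. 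The compactness lemma then produces a single $p$-function $g = p \times \hat g : X_2 \fun Y_2 \times \RV^{k_2}$ that is injective on $X_2$, which means $\hat g$ is injective on each $f$-fiber. Consequently, if $x, x' \in X_2$ satisfy $\hat g(x) = \hat g(x')$ but $x \neq x'$, they must lie in different $f$-fibers, so $f \!\restriction\! \hat g^{-1}(\lbar t)$ is injective for every $\lbar t$.

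Finally, I would splice the two pieces using a flag: set $m = 1 + \max(k_1, k_2)$ and define
\[
P(x) = \begin{cases} (1,\, h(f(x)),\, 0,\ldots,0) & x \in X_1,\\ (0,\, \hat g(x),\, 0,\ldots,0) & x \in X_2, \end{cases}
\]
so that every fiber of $P$ is entirely contained in $X_1$ or $X_2$ and inherits the corresponding behaviour (constant on $X_1$-fibers, injective on $X_2$-fibers). I expect the only delicate step to be the compactness argument for $X_2$: one must check carefully that the family of $p$-functions into $Y_2 \times \RV^k$ is $p$-closed so that Lemma~\ref{compactness:injectivity} applies, and that the resulting $\hat g$ really is injective on each single $f$-fiber (not merely that $p \times \hat g$ is globally injective). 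Once that is in place, the rest is bookkeeping.
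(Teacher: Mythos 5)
Your proposal is correct and follows essentially the same route as the paper: partition $Y$ via Lemma~\ref{function:dim:1:range:decom}, apply Lemma~\ref{finite:VF:project:RV} to the finite set $Y_1$ and to each finite fiber over $Y_2$, and glue by compactness (Lemma~\ref{compactness:injectivity}); the paper's proof is just a terser version of exactly this. Your extra care about $p$-closedness and fiberwise injectivity of $\hat g$, and the flag coordinate for splicing, are the routine bookkeeping the paper leaves implicit (note only that padding/flag values should be taken in $\RV$, e.g.\ $1$ and $\infty$, rather than $0$).
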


\begin{lem}[{\cite[Corollary~3.2]{Yin:special:trans}}]\label{acl:VF:transfer:acl:RV}
For any $\lbar t \in \RV$, any $\lbar t$-definable subset $A \sub \rv^{-1}(\lbar t)$, and any element $x$, if $x \in \acl(\lbar a)$ for every $\lbar a \in A$ then $x \in \acl(\lbar t)$. Similarly, for any $\lbar \gamma \in \Gamma$, any $\lbar \gamma$-definable subset $B \sub \vrv^{-1}(\lbar \gamma)$, and any element $x$, if $x \in \acl(\lbar t)$ for every $\lbar t \in B$ then $x \in \acl(\lbar \gamma)$.
\end{lem}

\begin{lem}[{\cite[Lemma~3.3]{Yin:special:trans}}]\label{dcl:to:ac}
For any $\lbar a, b \in \VF$ and $\lbar t \in \RV$, if $b \in \acl(\lbar a, \lbar t)$ then $b \in \VF(S)(\lbar a)^{\alg}$.
\end{lem}

\begin{lem}[{\cite[Corollary~3.5]{Yin:special:trans}}]\label{function:rv:to:vf:finite:image}
Let $A \sub \RV^m$ and $f: A \fun \VF^n$ a definable function. Then $f(A)$ is finite.
\end{lem}

\begin{lem}[{\cite[Corollary~3.7]{Yin:special:trans}}]\label{alg:ind:imag}
If $a \in \VF$ is such that $a \notin \acl(\0)$, then for any $t \in \RV$ we have $a \notin \acl(t)$. Similarly, if $t \in \RV$ is such that $t \notin \acl(\0)$, then for any $\gamma \in \Gamma$ we have $t \notin \acl(\gamma)$.
\end{lem}

\begin{lem}[{\cite[Lemma~4.13]{Yin:QE:ACVF:min}}]\label{approx:roots}
Let $\gb$ be a ball contained in an $\rv$-ball $t$. Let $G_1(X), \ldots, G_n(X)$ be polynomials with coefficients in $S$. Suppose that $\gb$ does not contain any root of any $G_i(X)$ (hence $\rv$ is constant on every $G_i(\gb)$). If $\gb$ is a closed ball then there is a $d \in t \mi \gb$ such that $\rv(G_i(d)) = \rv(G_i(\gb))$ for every $i$. If $\gb$ is an open ball then there is a $d \in t \mi \gb$ such that $\vv(G_i(d)) = \vv(G_i(\gb))$ for every $i$.
\end{lem}

\begin{lem}[{\cite[Lemma~4.15]{Yin:QE:ACVF:min}}]\label{effectiveness}
Let $\gB$ be an algebraic set of closed balls. Then $\gB$ has centers.
\end{lem}

\begin{lem}[{\cite[Lemma~4.17]{Yin:QE:ACVF:min}}]\label{algebraic:balls:definable:centers}
Suppose that $S$ is $(\VF, \Gamma)$-generated. Let $\gB$ be an algebraic set of balls. Then $\gB$ has centers.
\end{lem}

\begin{cor}\label{function:gamma:to:rv:finite:image}
Let $A \sub \Gamma$ and $f : A \fun \RV$ a definable function. Then $f(A)$ is finite.
\end{cor}
\begin{proof}
By \omin-minimality and compactness, there are only finitely many $t \in \RV$ such that $f^{-1}(t)$ is infinite and every one of them is definable. So, without loss of generality, we may assume that $f$ is finite-to-one. We may also add parameters so that $S$ is $\VF$-generated. By Lemma~\ref{algebraic:balls:definable:centers}, $f(\gamma)$ contains a $\gamma$-definable point for every $\gamma \in A$. It follows from \cmin-minimality that $f(A)$ must be finite.
\end{proof}

Recall from~\cite[Notation~3.16]{Yin:special:trans} that every subset $A \sub \VF^n \times \RV^m$ may be treated as a function from $\pvf(A)$ into the powerset $\mdl P(\RV^m)$ or, sometimes more conveniently, as a function $\VF^n \fun \mdl P(\RV^m)$.

\begin{lem}[{\cite[Lemma~3.18]{Yin:special:trans}}]\label{fun:quo:rep}
Let $G$ be a definable additive subgroup of $\VF$ (hence $G$ is either an open ball around $0$ or a closed ball around $0$). Let $f : \VF \fun \mdl P(\RV^m)$ be a definable function. Then there are $G$-cosets $D_1, \ldots, D_n$ such that $f$ is constant on any  $G$-coset other than $D_1, \ldots, D_n$.
\end{lem}

Recall the notions of $\VF$-dimension and $\RV$-dimension from~\cite[Section~4]{Yin:special:trans} and the terminology that a property holds in a set almost everywhere if it holds outside of a subset of smaller dimension. Whether $\VF$-dimension or $\RV$-dimension is used should be clear in context.

\begin{lem}[{\cite[Corollary~4.7]{Yin:special:trans}}]\label{VF:dim:rv:polydisc}
Let $A$ be a definable subset that contains an $\rv$-polydisc of the form $\{\lbar 0\} \times \rv^{-1}(\lbar t) \times \set{\lbar s}$, where $\lbar 0$ is a tuple of $0 \in \VF$ and $\lbar t \in (\RV^{\times})^k$. Then $\dim_{\VF}(A) \geq k$.
\end{lem}

Let $\ga$ be an open ball and $f : \ga \fun \VF$ an injection. We say that $f$ is \emph{$\rv$-linear} if there is a $t \in \RV$ such that $\rv(f(a) - f(a')) = t \rv(a - a')$ for any $a, a' \in \ga$. Obviously if $f$ is $\rv$-linear then there is only one $t \in \RV$ that satisfies the requirement.

\begin{lem}[{\cite[Lemma~8.5]{Yin:special:trans}}]\label{ball:shrink:RV:cons}
Let $A, B \sub \VF$ be infinite subsets and $f : A \fun B$ a definable bijection. For almost all $a \in A$ there is an $a$-definable $\delta \in \Gamma$ such that $f \rest \go(a, \delta)$ is $\rv$-linear.
\end{lem}

\begin{defn}
A function $f : \VF^n \fun \mdl P(\RV^m)$ is \emph{locally constant at $\lbar a$} if there is an open subset $U_{\lbar a} \sub \VF^n$ containing $\lbar a$ such that $f \rest U_{\lbar a}$ is constant. If $f$ is locally constant at every point in an open subset $A$ then $f$ is locally constant on $A$.
\end{defn}

\begin{lem}[{\cite[Lemma~8.9]{Yin:special:trans}}]\label{fun:almost:loc:con}
Let $f : \VF^n \fun \mdl P(\RV^m)$ be a definable function. Then $f$ is locally constant almost everywhere.
\end{lem}

Recall from~\cite[Section~4]{Yin:special:trans} the definitions of $\VF$-categories and $\RV$-categories without volume forms. The fundamental lifting map $\bb L$ is also defined there (see~\cite[Definition~4.18]{Yin:special:trans}).

\begin{lem}[{\cite[Corollary~4.19]{Yin:special:trans}}]\label{RV:morphism:condition}
Suppose that $F$ is volumetric and there is a definable
function $F^{\uparrow} : \mathbb{L}(U, f) \fun \mathbb{L}(U', f')$
such that the diagram
\[
\bfig
  \square(0,0)/->`->`->`->/<600,400>[\mathbb{L}(U, f)`U_f`\mathbb{L}(U', f')`
   U'_{f'}; \rv`F^{\uparrow}`F_{f, f'}`\rv]
 \square(600,0)/->`->`->`->/<600,400>[U_f`U`U'_{f'}`U';
  \pr_{>k}``F`\pr_{>k}]
\efig
\]
commutes. Then $F$ is a morphism in $\RV[k,\cdot]$.
\end{lem}

\begin{rem}\label{RV:isomorphism:weaker:condition}
In Lemma~\ref{RV:morphism:condition}, if both $F$ and
$F^{\uparrow}$ are bijections then we may drop the assumption that
$F$ is volumetric, since it is guaranteed by the commutative
diagram and Corollary~\ref{VF:dim:rv:polydisc}.
\end{rem}

Recall from~\cite[Definition~5.1]{Yin:special:trans} the notion of a special bijection between two definable subsets. The \emph{$\RV$-hull} of a subset $A$, denoted by $\RVH(A)$, is the union of the $\rv$-polydiscs that have a nonempty intersection with $A$. If $A$ is equal to its $\RV$-hull then $A$ is an \emph{$\RV$-pullback}. An $\RV$-pullback is \emph{degenerate} if it contains a degenerate $\rv$-polydisc and is \emph{strictly degenerate} if it only contains degenerate $\rv$-polydiscs.

\begin{defn}
Let $f : A \fun B$ be a function. We say that $f$ is \emph{contractible} if for every $\rv$-polydisc $\gp \sub \RVH(A)$ the subset $f(\gp \cap A)$ is contained in one $\rv$-polydisc.
\end{defn}

Clearly, if $f : A \fun B$ is a (definable) contractible function then there is a unique (definable) function $f_{\downarrow} : \rv(A) \fun \rv(B)$ such that the diagram commutes:
\[
\bfig
  \square/->`->`->`->/<600,400>[A`B`\rv(A)`\rv(B);  f`\rv`\rv`f_{\downarrow}]
 \efig
\]
We say that $f_{\downarrow}$ is the \emph{contraction} of $f$. Also note that, in this case, if $A$, $B$ are $\RV$-pullbacks and $f$, $f_{\downarrow}$ are bijective then $f$ is a lift of $f_{\downarrow}$ (see~\cite[Definition~7.3]{Yin:special:trans} for the notion of a lift of a function). Equivalently, if $f$ is bijective and both $f$ and $f^{-1}$ are contractible then $f$ is a lift of
$f_{\downarrow}$.

The following technical result plays a crucial role in both~\cite{Yin:special:trans} and this paper (see the lemmas in Section~\ref{section:2unit} as well as Lemma~\ref{bijection:made:contractible} below).

\begin{thm}[{\cite[Theorem~5.5]{Yin:special:trans}}]\label{special:bi:polynomial:constant}
Let $F(\lbar X) = F(X_1, \ldots, X_n)$ be a polynomial with coefficients in $\VF(S)$, $B \sub \VF^n$ a definable subset, $\tau : B \fun A$ a special bijection, and $f = F \circ \tau^{-1}$. Then there is a special bijection $T$ on $A$ such that $T(A)$ is an $\RV$-pullback and $f \circ T^{-1}$ is contractible.
\end{thm}

\begin{cor}[{\cite[Corollary~5.6]{Yin:special:trans}}]\label{all:subsets:rvproduct}
Every definable subset $A \sub \VF^n \times \RV^m$ is a definable
deformed $\RV$-pullback.
\end{cor}

Here is Step~1 for categories without volume forms:

\begin{thm}[{\cite[Corollary~5.7]{Yin:special:trans}}]\label{L:surjective}
The map $\mathbb{L}: \ob \RV[k, \cdot] \fun \ob \VF[k, \cdot]$ is
surjective on the isomorphism classes of $\VF[k, \cdot]$. The map $\mathbb{L}: \ob \RV[k] \fun \ob \VF[k]$ is surjective on the isomorphism classes of $\VF[k]$.
\end{thm}

Recall the notion of a $\lbar \gamma$-polynomial (\cite[Definition~7.1]{Yin:special:trans}). There is a version of Hensel's lemma for such polynomials:

\begin{lem}[{\cite[Lemma~7.2]{Yin:special:trans}}]\label{hensel:lemma}
Let $F_1(\lbar X), \ldots, F_n(\lbar X)$ be $\lbar \gamma$-polynomials with residue values $\alpha_1, \ldots, \alpha_n$, where $\lbar \gamma = (\gamma_1, \ldots, \gamma_n) \in \Gamma$. For every simple common residue root $\lbar t = (t_1,
\ldots, t_n) \in \RV$ of $F_1(\lbar X), \ldots, F_n(\lbar X)$
there is a unique $\lbar a \in \rv^{-1}(\lbar t)$ such that
$F_i(\lbar a) = 0$ for every $i$.
\end{lem}

To apply this generalized Hensel's lemma we usually need the following:

\begin{lem}[{\cite[Lemma~7.4]{Yin:special:trans}}]\label{exists:gamma:polynomial}
Suppose that $S$ is $(\VF, \Gamma)$-generated. Let $\lbar t = (\lbar t_n, t_{n}) \in \RV$ with $t_n \in \acl(\lbar t_n)$ and $\vrv (\lbar t) = \lbar \gamma \in \Gamma$. Then there is a $\lbar \gamma$-polynomial $F(\lbar X)$ with coefficients in $\VF(S)$ such that $\lbar t$ is a residue root of $F(\lbar X)$ but is not a residue root of $\partial F(\lbar X) / \partial X_n$.
\end{lem}

Here is Step~2 for categories without volume forms:

\begin{thm}[{\cite[Corollary~7.7]{Yin:special:trans}}]\label{L:semigroup:hom}
Suppose that the substructure $S$ is $(\VF, \Gamma)$-generated.
The map $\mathbb{L}$ induces surjective homomorphisms between various
Grothendieck semigroups, for example:
\[
\gsk \RV[k, \cdot] \fun \gsk \VF[k, \cdot],\quad
\gsk \RV[k] \fun \gsk \VF[k].
\]
\end{thm}

There are two different and yet compatible approaches to defining the Jacobian of a morphism in the $\VF$-categories (\cite[Section~9]{Yin:special:trans}): one analytic, the other algebraic, each has its own advantages. The algebraic approach may also be used to define the Jacobian of a morphism in the $\RV$-categories.

\begin{thm}[{\cite[Corollary~9.9]{Yin:special:trans}}]\label{diff:almost:every}
Let $f : \VF^n \fun \VF^m$ be a definable function. Then $f$ is continuously partially differentiable almost everywhere.
\end{thm}

\begin{lem}[{\cite[Lemma~9.11]{Yin:special:trans}}]\label{special:tran:vol:pre}
For any special bijection $T : A \fun A^{\sharp}$, the Jacobians of $T$ and $T^{-1}$ are equal to $1$ almost everywhere. If $A$ is a nondegenerate $\RV$-pullback then they are equal to $1$ everywhere.
\end{lem}

\begin{lem}[{\cite[Lemma~9.12]{Yin:special:trans}}]\label{jcb:chain}
Let $f : A \fun B$ and $g : B \fun C$ be definable functions. Then for any $\lbar x \in A$,
\[
\jcb_{\VF} ( g \circ f)(\lbar x) = \jcb_{\VF} g(f(\lbar x)) \cdot \jcb_{\VF} f(\lbar x),
\]
if both sides are defined.
\end{lem}

Recall from~\cite[Definition~9.13]{Yin:special:trans} the notion of an essential isomorphism between two objects in $\RV[k]$. Note that conceptually this is quite different from the notion of an essential bijection between two objects in $\VF[k]$ (see~\cite[Definition~10.1]{Yin:special:trans}).

\begin{lem}[{\cite[Lemma~9.15]{Yin:special:trans}}]\label{RV:iso:class:lifted:jcb}
Let $F : (U, f)\fun (V, g)$ be an essential $\RV[k]$-isomorphism and $F^{\uparrow} : \bb L(U, f) \fun \bb L (V, g)$ a lift of $F$. Then for all $\lbar u \in U$ outside a definable subset of $U$ of dimension $< k$ and almost all $(\lbar a, \lbar u) \in \rv^{-1}(f(\lbar u), \lbar u)$,
\[
\rv(\jcb_{\VF} F^{\uparrow}(\lbar a, \lbar u)) = \jcb_{\RV} F(f(\lbar u), \lbar u).
\]
Also, for almost all $(\lbar a, \lbar u) \in \bb L(U, f)$,
\[
\vv(\jcb_{\VF} F^{\uparrow}(\lbar a, \lbar u)) = \jcb_{\Gamma} F(f(\lbar u), \lbar u).
\]
\end{lem}

We now consider the $\VF$-categories and the $\RV$-categories with volume forms (see~\cite[Section~10]{Yin:special:trans}). For any $(\mathbf{U}, \omega) \in \mRV[k]$, let $\bb L \omega$ be the function on $\bb L \mathbf{U}$ naturally induced by $\omega$. The \emph{lift} of $(\mathbf{U}, \omega)$ is the object $\bb L(\mathbf{U}, \omega) = (\bb L \mathbf{U}, \bb L \omega) \in \mVF[k]$.


\begin{cor}\label{lift:iso:mrv:iso}
Let $(\mathbf{U}, \omega), (\mathbf{V}, \pi) \in \mRV[k]$ and $F^{\uparrow} : \bb L \mathbf{U} \fun \bb L \mathbf{V}$ a lift of an essential $\RV[k]$-isomorphism $F: \mathbf{U} \fun \mathbf{V}$. If $F^{\uparrow}$ is measure-preserving with respect to $\bb L \omega$, $\bb L \pi$ then $F$ is a $\mRV[k]$-isomorphism between $(\mathbf{U}, \omega)$ and $(\mathbf{V}, \pi)$.
\end{cor}


Here are Step~1 and Step~2 for the categories with volume forms:

\begin{thm}[{\cite[Theorem~10.4]{Yin:special:trans}}]\label{L:measure:surjective}
Every object $(A, \omega)$ in $\mVF[k]$ is isomorphic to another object $\bb L(\mathbf{U}, \pi)$ in $\mVF[k]$, where $(\mathbf{U}, \pi) \in \mRV[k]$; similarly for other pairs of corresponding categories.
\end{thm}

\begin{thm}[{\cite[Corollary~10.6]{Yin:special:trans}}]\label{L:measure:semigroup:hom}
The map $\mathbb{L}$ induces surjective homomorphisms between the various
Grothendieck semigroups associated with the categories with volume forms, for example:
\[
\gsk \mRV[k] \fun \gsk \mVF[k], \quad \gsk \mgRV[k] \fun \gsk \mgVF[k].
\]
\end{thm}

\begin{nota}\label{nota:iso:classes}
For any subset $E \sub \mathds{N}$ with $\abs{E} = k$, we write $[U]_{E}$ and $[U, \omega]_{E}$ for the isomorphism classes $[(U, \pr_{E})] \in \gsk \RV[k,\cdot]$ and $[(U, \pr_{E}, \omega)] \in \gsk \mRV[k]$, respectively. If $\omega$ is the constant form $1$ then $[U, \omega]_{E}$ is simply written as $[U]_{E}$ as well. If $E = I_k$ etc.\ then we may write $[U]_{\leq k}$, $[U, \omega]_{\leq k}$, etc. If in context it is clear that certain function $f: U \fun \RV^k$ is being used then we just write $[U]_{k}$ and $[U, \omega]_{k}$ for the isomorphism classes $[(U, f)] \in \gsk \RV[k,\cdot]$ and $[(U, f, \omega)] \in \gsk \mRV[k]$. For example, any two singletons $(\{t\}, f), (\{s\}, g) \in \RV[k,\cdot]$ are isomorphic, so we may write $[1]_k$ for their isomorphism class. Similarly, if $t, s \in \K^{\times}$ then $(\{t\}, \id, 1), (\{s\}, \id, 1) \in \mRV[1]$ are isomorphic and we write $[1]_1$ for this isomorphism class.
\end{nota}

\section{Parametric balls and atomic subsets}\label{section:atoms}

In this section we shall work in the expansion $\gC^{\bullet}$. The main goal is to establish the open-to-open property for definable bijections between two subsets of $\VF$ (see Proposition~\ref{partition:open:to:open} below).

\begin{defn}
Let $Q$ be a set of parameters. We say that a (not necessarily definable) subset $A \sub \VF^n \times \RV^m$ \emph{generates a complete $Q$-type} if, for every $Q$-definable subset $B$, either $A \sub B$ or $A \cap B = \0$. If $A$ is $Q$-definable and generates a complete $Q$-type then we say that it is \emph{$Q$-atomic} or \emph{atomic over $Q$}.
\end{defn}

For example, if $t \in \RV$ is not algebraic then $\rv^{-1}(t)$ is $t$-atomic. More generally, we have

\begin{lem}\label{atomic:over:its:name}
Let $\gB$ be a definable set of balls and $\phi$ a formula such that, for all $t_1 \neq t_2 \in \gB$, $\phi(t_1)$ and $\phi(t_2)$ define two disjoint balls $\gb_{t_1}$ and $\gb_{t_2}$. For each $t \in \gB$, if $\dot \gb_t$ is not algebraic then $\gb_t$ is $t$-atomic.
\end{lem}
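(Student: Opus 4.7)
The plan is to prove atomicity via a homogeneity argument: I will show that the pointwise stabilizer of $\seq{Q, t}$ in $\operatorname{Aut}(\gC)$ acts transitively on the elements of $\gb_t$, so that any $\seq{Q, t}$-definable subset of $\VF$ is invariant under this group and hence either contains $\gb_t$ or misses it entirely. Since the map $t' \mapsto \gb_{t'}$ is injective on $T$ (by the disjointness hypothesis), $\gb_t \in \acl(Q)$ iff $t \in \acl(Q)$, so I may assume $t \notin \acl(Q)$.

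The key preliminary step is to check that $\gb_t$ admits no $Q$-definable proper sub-ball. If some $Q$-definable ball $\gc$ were strictly contained in $\gb_t$, then since $\{\gb_{t'}\}_{t' \in T}$ are pairwise disjoint, $\gc$ would lie in a unique member of the family, namely $\gb_t$. So $t$ would be the unique $t' \in T$ with $\gc \sub \gb_{t'}$, giving $t \in \dcl(Q, \gc) \sub \acl(Q)$, contrary to assumption.

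Next I will invoke \cmin-minimality together with quantifier elimination (Theorem~\ref{qe:acvf}) to conclude that the type of an element $a \in \VF$ over $Q$ is determined by the $Q$-definable balls containing $a$. Since any two balls are nested or disjoint, every $Q$-definable ball is either disjoint from $\gb_t$, contains $\gb_t$, or is strictly contained in $\gb_t$; the last case is excluded by the previous paragraph. Hence any two elements $a, b \in \gb_t$ have the same $Q$-type, and by saturation of $\gC$ there is $\sigma \in \operatorname{Aut}(\gC / Q)$ with $\sigma(a) = b$. The image $\sigma(\gb_t) = \gb_{\sigma(t)}$ belongs to the disjoint family and meets $\gb_t$ at $b$, forcing $\sigma(t) = t$, so $\sigma \in \operatorname{Aut}(\gC / \seq{Q, t})$. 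Any $\seq{Q, t}$-definable $Y$ is then $\sigma$-invariant, so $a \in Y$ iff $b \in Y$, which yields $\gb_t \sub Y$ or $\gb_t \cap Y = \0$.

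The most delicate step is the justification that the $Q$-type of $a \in \VF$ is controlled by the $Q$-definable balls containing $a$. After quantifier elimination this type is determined by atomic formulas $f(a) = 0$ and $\rv(f(a)) \, \Box \, s$ for polynomials $f$ over $Q$; the $\VF$-sort part is handled directly by \cmin-minimality, while the $\RV$-sort part requires $\rv \circ f$ to be constant on $\gb_t$. For this I will invoke the earlier remark that $\rv \circ f$ is constant on any ball contained in an $\rv$-ball and disjoint from the roots of $f$: the roots of $f$ lie in $\acl(Q)$ and therefore avoid $\gb_t$ (the combination of $t \notin \acl(Q)$ with disjointness gives $\gb_t \cap \acl(Q) = \0$), and $\gb_t$ itself sits in an $\rv$-ball because $0 \in \acl(Q)$ is not in $\gb_t$.
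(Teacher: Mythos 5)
Your reduction to showing that all elements of $\gb_t$ realize the same type over $\seq{Q}$ is sound: the observation that any $\sigma \in \operatorname{Aut}(\gC/\seq{Q})$ carrying one point of $\gb_t$ to another must satisfy $\sigma(t)=t$ (since $\sigma(\gb_t)=\gb_{\sigma(t)}$ meets $\gb_t$ and the family is pairwise disjoint) is correct and is a clean way to pass from $Q$ to $\seq{Q,t}$. The gap is in the step that is supposed to deliver the equality of $Q$-types. You invoke the principle that the $1$-type over $Q$ of a field element is determined by the $Q$-\emph{definable} balls containing it, having excluded only $Q$-definable proper sub-balls of $\gb_t$. That principle is false: \cmin-minimality says a $Q$-definable subset of $\VF$ is a boolean combination of balls, but the balls of its canonical decomposition are in general only $Q$-\emph{algebraic} --- a finite family permuted by $\operatorname{Aut}(\gC/\seq{Q})$ --- and need not be individually $Q$-definable (already a finite $Q$-definable set of conjugate points separates two elements that lie in exactly the same $Q$-definable balls). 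So a $Q$-definable set could meet $\gb_t$ in a nonempty proper union of conjugate balls, none of them $Q$-definable, and nothing in your argument rules this out. This is precisely the configuration the paper's proof is built to exclude: there, the finitely many $\gb_{t'}$ that contain a boolean component of the offending $Q$-definable set form a finite $Q$-definable collection, which would force $\gb_t$ to be $Q$-algebraic, contradiction.

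The gap is repairable along your own lines, but the repair is where the real content lies: if a $Q$-algebraic ball $\gc$ (a member of a finite $Q$-definable set of balls) were properly contained in $\gb_t$, then $t$ would lie in the finite $Q$-definable subset $\set{t' \in T : \gb_{t'} \text{ contains some ball of that finite set}}$ of $T$, giving $t \in \acl(Q)$ and hence $\gb_t$ $Q$-algebraic; you would then also need to argue, via the uniqueness of the canonical decomposition, that membership in $Q$-algebraic balls determines the $1$-type over $Q$. As written, neither step is carried out. A secondary soft spot is your final paragraph: since the elements of $Q$ are balls, they enter formulas only through representatives, so the coefficients of the relevant polynomials involve representative variables and their roots need \emph{not} lie in $\acl(Q)$; the paper's subsequent lemmas in this section (for example the analysis where a representative $q \in \gq$ occurs in $f(x)$) go to some length to handle exactly this, so the syntactic part of your argument is looser than it can afford to be.
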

\begin{proof}
Suppose for contradiction that there is a non-algebraic $\dot \gb_s$ and a formula $\psi$ such that $\psi(s)$ defines a proper subset of $\gb_s$. For each $t \in \gB$, let $A_t$ be the set defined by $\psi(t)$ if it is a proper subset of $\gb_t$ and $A_t = \0$ otherwise. Set $A = \bigcup_{t \in \gB} A_t$, which is definable. By \cmin-minimality, $A$ is a boolean
combination of some balls $\gd_1, \ldots, \gd_n$. Since the balls
$\gb_t$ are pairwise disjoint, there are only finitely many balls
$\gb_t$ that contain some $\gd_i$. Note that the set of these finitely many balls is definable, which does not contain
$\gb_s$ since $\dot \gb_s$ is not algebraic. On the other
hand, since $\gb_s \cap A \neq \0$, we must have $\gb_s \sub
A$. This is a contradiction because the balls $\gb_t$ being
pairwise disjoint implies that $\gb_s \cap A$ is a proper subset
of $\gb_s$.
\end{proof}

\begin{lem}\label{atomic:base:change}
Let $A \sub \VF^n \times \RV^m$ be atomic. Then $A$ is $\lbar \gamma$-atomic for all $\lbar \gamma \in \Gamma$.
\end{lem}
\begin{proof}
By induction this is immediately reduced to the case that the length of $\lbar \gamma$ is $1$. Suppose for contradiction that there is a formula $\psi(\gamma)$ that defines a proper subset of $A$. Then the subset
\[
\Delta = \set{\gamma \in \Gamma : \psi(\gamma) \text{ defines a proper subset of } A}
\]
is nonempty and is definable. By $o$-minimality, some $\alpha \in \Delta$ is definable, contradicting the assumption that $A$ is atomic.
\end{proof}

\begin{lem}\label{subball:atomic}
Let $A \sub \VF$ generate a complete type and $\gb$ an open (or closed) ball contained in $A$. Then $\gb$ is $\dot \gb$-atomic.
\end{lem}
\begin{proof}
We assume that $\gb$ is an open ball, since the argument for closed
balls is identical. Suppose for contradiction that there is a formula $\psi$ such that $\psi(\dot \gb)$ defines a proper subset of $\gb$. By Lemma~\ref{atomic:over:its:name}, there is a finite definable subset $\gB$ of balls with $\dot \gb \in \gB$. For each $\dot \gd \in \gB$, let $B_{\dot \gd}$ be the subset defined by $\psi(\dot \gd)$ if it is a proper subset of $\gd$ and $B_{\dot \gd} = \0$ otherwise. Then $B = \bigcup_{\dot \gd \in \gB} B_{\dot \gd}$ is a definable subset that neither contains $A$ nor is disjoint from $A$, contradiction.
\end{proof}

\begin{defn}
Let $\gb_1$ and $\gb_2$ be two punctured balls. We say that they
are of the same \emph{primitive type} if
\begin{enumerate}
 \item $\rad(\gb_1) = \rad(\gb_2)$ and $\vcr(\gb_1) =
\vcr(\gb_2)$,
 \item they are both open balls or both closed balls or both thin annuli.
\end{enumerate}
\end{defn}

\begin{lem}\label{atomic:subset:types}
Every atomic $A \sub \VF$ is the union of disjoint punctured balls $\gb_1, \ldots, \gb_n$ of the same primitive type.
\end{lem}
\begin{proof}
By \cmin-minimality, $A$ is a union of disjoint punctured balls $\gb_1,
\ldots, \gb_n$. First of all,
since $A$ is atomic, both $\vcr$ and $\rad$ must be constant on
$\set{\gb_1, \ldots, \gb_n}$, because otherwise there would be a definable proper subset of $A$ according to $\min \set{\vcr(\gb_1), \ldots, \vcr(\gb_n)}$ or $\min \set{\rad(\gb_1), \ldots, \rad(\gb_n)}$. Similarly either $\gb_1, \ldots, \gb_n$ are all closed balls or are all open balls. Also, since the subset of
$A$ that contains exactly every unpunctured ball $\gb_i$ is
definable, we have that either $\gb_1, \ldots, \gb_n$ are all
punctured or are all unpunctured.

So it is enough to show that if $\gb_i$ is punctured then it must
be a thin annulus. By atomicity again, if $\gb_1, \ldots, \gb_n$
are punctured then each $\gb_i$ must contain the same number of
holes. If $\gb_i$ has a hole $\gh$ with $\rad(\gh) < \rad(\gb_i)$
then $\gb_i \mi \gh^*$ is nonempty, where $\gh^*$ is the closed
ball that has radius $(\rad(\gb_i) + \rad(\gh))/2$ and contains
$\gh$. The collection of all such holes $\gh_1, \ldots, \gh_m$ is definable and hence, if it is not empty, then there
would be a proper subset of $A$ that is defined by replacing each $\gh_i$ with $\gh_i^*$. So each $\gb_i$ is a closed ball and each hole in each
$\gb_i$ is a maximal open ball in $\gb_i$.

Suppose for contradiction that $\gb_1$ contains more than one holes $\gh_1,
\ldots, \gh_m$. Recall from Remark~\ref{rem:K:aff} that each closed ball carries a $\K$-affine structure. This means that the subsets
\[
1 \cdot(\gh_2 - \gh_1), \ldots, (m+1)\cdot(\gh_2 - \gh_1)
\]
are distinct $\rv$-balls. Therefore, for some $1 \leq k \leq m+1$ we have
that $\gh_1 + k \cdot (\gh_2 - \gh_1)$ is a maximal open ball in
$\gb_1$ and is disjoint from $\bigcup_i \gh_i$. This means that
there is a finite definable set of maximal open balls in
$\gb_1, \ldots, \gb_n$ that strictly contains the set of holes in
$\gb_1, \ldots, \gb_n$. This readily implies that $A$ has a
nonempty proper definable subset, contradiction.
\end{proof}

Note that, in the above lemma, if $A$ is definable in $\gC$ then it cannot be a
disjoint union of closed balls of radius $< \infty$, because in
that case, by Lemma~\ref{effectiveness}, the closed balls would
have definable centers. Now, by the above lemma, the \emph{radius} and the \emph{valuative center} of $A$ are well-defined quantities: they are respectively the radius and the valuative center of the balls $\gb_1, \ldots,
\gb_n$ in the above lemma. These are also denoted by $\rad(A)$ and
$\vcr(A)$. The balls $\gb_1, \ldots, \gb_n$ are called the
\emph{primitive components} of $A$.

\begin{cor}
If $A \sub \VF$ is atomic and $\gb \sub A$ is an open (closed) ball then every $a \in A$ is contained in an open (closed) ball $\gd_a \sub A$ with $\rad(\gd_a) = \rad(\gb)$.
\end{cor}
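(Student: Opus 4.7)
The plan is to invoke Lemma~\ref{atomic:subset:types} to write $X = \gb_1 \sqcup \cdots \sqcup \gb_n$ as a disjoint union of haecceitistic components of a common type (all open, all closed, or all thin annuli), common radius $r$, and common valuative center. Since any two balls in $\VF$ are either nested or disjoint, the given ball $\gb \sub X$ must lie inside a single component $\gb_j$; a short comparison of shapes (an open ball of radius $r$ cannot contain a closed ball of the same radius, etc.) forces $\rad(\gb) \geq r$.

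Now fix an arbitrary $a \in X$ and let $\gb_k$ be the component containing $a$. Define $\gd_a$ to be the open (respectively closed) ball of radius $\rad(\gb)$ centered at $a$; I would then show $\gd_a \sub \gb_k \sub X$ by the ultrametric inequality. Namely, choosing any $c_k$ in the closed ball underlying $\gb_k$, for every $x \in \gd_a$ one has
\[
\vv(x - c_k) \;\geq\; \min\bigl(\vv(x - a),\, \vv(a - c_k)\bigr) \;\geq\; r,
\]
so $x$ lies in the closed ball of radius $r$ around $c_k$. A brief case split on the type of $\gb_k$ turns this into the desired containment: in the open component case the first argument of the minimum is strictly greater than $r$ (using $\rad(\gb) \geq r$), matching openness of $\gb_k$; in the closed case the bound above is exactly what is required; in the thin annulus case one additionally checks that $\gd_a$ is disjoint from the excised hole $\gh_k$ of $\gb_k$, which follows because $a \in \gd_a$ and $a \notin \gh_k$, so the two balls $\gd_a$ and $\gh_k$ cannot be nested and must therefore be disjoint.

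The only delicate point is the thin annulus case, where one needs to ensure that the candidate $\gd_a$ does not accidentally fall into the puncture of $\gb_k$; the observation in the preceding paragraph settles this. Everything else is a routine application of the ultrametric inequality together with the haecceitistic decomposition supplied by Lemma~\ref{atomic:subset:types}.
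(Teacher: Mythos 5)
Your overall route is the intended one: the corollary is meant to fall out of Lemma~\ref{atomic:subset:types} exactly as you set it up, by locating $\gb$ inside a single haecceitistic component (so $\rad(\gb) \geq r$, the common radius $r$ of the components) and then placing the ball of radius $\rad(\gb)$ around an arbitrary $a \in X$ inside the component of $a$. Two of your written steps, however, do not quite deliver what you claim. First, in the ultrametric estimate you take $c_k$ only in the \emph{closed} ball underlying $\gb_k$, so the second argument of the minimum is merely $\geq r$ and the display certifies only membership in that closed ball; when $\gb_k$ is an open ball (and also when you must steer clear of the hole of an annulus) this is not yet $\gd_a \sub \gb_k$. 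The repair is to center at $a$ itself: $\gd_a$ is contained in the radius-$r$ ball of the appropriate kind around $a$ because $\rad(\gd_a) = \rad(\gb) \geq r$, and that ball is $\gb_k$, respectively its underlying closed ball. Note that in the mixed cases --- $\gb$ closed while the components are open balls or thin annuli --- you need the strict inequality $\rad(\gb) > r$ for this containment; that is exactly what your ``comparison of shapes'' provides, but it must be invoked here rather than only the weak form $\rad(\gb) \geq r$.

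Second, in the thin-annulus case your reason that $\gd_a$ and the hole $\gh_k$ ``cannot be nested'' excludes only $\gd_a \sub \gh_k$ (via $a \in \gd_a \mi \gh_k$); the other containment $\gh_k \sub \gd_a$ would just as well put points of the hole into $\gd_a$ and must be excluded too. It is excluded by the radii: if $\gh_k \sub \gd_a$, pick $y \in \gh_k$; then $\gd_a$ is a ball about $y$ of radius $\geq r$, and even $> r$ when $\gd_a$ is closed (shape comparison again, since a closed ball of radius $r$ cannot sit inside a thin annulus), so $\gd_a$ is contained in the radius-$r$ open ball about $y$, which is $\gh_k$; hence $\gd_a = \gh_k \ni a$, a contradiction. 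With these two adjustments your argument is complete and agrees with the intended derivation; alternatively, the whole corollary also follows in one line from Lemma~\ref{atomic:base:change}, since after adjoining $\gamma = \rad(\gb)$ the set of $a \in X$ lying in an open (closed) ball of radius $\gamma$ contained in $X$ is a nonempty $\seq{Q, \gamma}$-definable subset of the atomic set $X$.
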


\begin{lem}\label{hae:component:11}
Let $A \sub \VF$ be atomic with only one primitive component. Let $f : A \fun \VF$ be a definable function. Then $f(A)$ also has only one primitive component.
\end{lem}
\begin{proof}
Let $\gb_1, \ldots, \gb_n$ be the primitive components of
$f(A)$ given by Lemma~\ref{atomic:subset:types}. Suppose for
contradiction that $n > 1$. Then, by $C$-minimality, there is exactly one of these
components, say $\gb_1$, such that
$f^{-1}(\gb_1)$ is a punctured ball of the form $A \mi \bigcup_j
\gh_j$ for some holes $\gh_j$. Consequently, the ball $\gb_1$ and hence $f^{-1}(\gb_1)$ are definable, contradicting the assumption that $A$ is
atomic.
\end{proof}

\begin{lem}\label{atom:open:RV:constant}
Let $\ga$ be an atomic open ball and $f : \ga \fun \mdl P (\RV^m)$ a definable function. Then $f$ is constant.
\end{lem}
\begin{proof}
By Lemma~\ref{atomic:base:change}, $\ga$ remains atomic over any $\gamma > \rad(\ga)$. Then, by Lemma~\ref{fun:quo:rep}, $f$ is constant on every open subball of $\ga$ of any radius $> \rad(\ga)$. Hence $f$ must be constant.
\end{proof}

Combining Lemma~\ref{subball:atomic} and Lemma~\ref{atom:open:RV:constant}, we get

\begin{cor}
Let $A \sub \VF$ be atomic and $f : A \fun \mdl P (\RV^m)$ a definable function. Then $f$ is constant on every open subball of $A$.
\end{cor}

The following lemma says that, in a geometrical sense, an atomic open ball is really different from an atomic closed ball or an atomic thin annulus.

\begin{lem}\label{atomic:balls:no:alg:relation}
Let $\go$ be an open ball and $\gl$ a closed ball of radius $< \infty$ or a thin annulus. Suppose that both $\go$ and $\gl$ are atomic. If $A \sub \go \times \gl$ is definable then the projection $\pr_1 \rest A$ cannot be finite-to-one.
\end{lem}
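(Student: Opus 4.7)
The plan is to argue by contradiction. Suppose $X \sub \go \times \gl$ is $Q$-definable, $\pr_1 \rest X$ is finite-to-one, and $X \neq \0$. First, atomicity of $\go$ applied to the $Q$-definable subsets $\pr_1(X)$ and $\{a \in \go : |\fib(X, a)| \geq n\}$ (one for each $n$) lets me assume $\pr_1(X) = \go$ and that the fiber size $k = |\fib(X, a)|$ is a constant positive integer.

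The central device will be a constancy principle: every $Q$-definable function $f : \go \fun \RV$ is constant on $\go$. By quantifier elimination (Theorem~\ref{qe:acvf}), such an $f$ is defined by a quantifier-free formula whose only $\VF$-to-$\RV$ content comes from terms $\rv(P(a))$ with $P \in \dcl(Q)[x]$. Atomicity of $\go$ gives $\go \cap \acl(\0) = \0$, so no root $r$ of any such $P$ lies in $\go$; for any $c \in \go$, every root satisfies $\vv(r - c) \leq \rad(\go) < \vv(a - a')$ for all $a, a' \in \go$, whence $\vv(a - r) = \vv(c - r)$ and, by a short computation, $\rv(a - r)$ is constant on $\go$. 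Taking products over the roots makes $\rv(P(a))$ constant, so $f$ is constant. I will apply this with $f$ being the composition $a \efun \rv(P(e_j(\fib(X, a))))$, where $e_j$ is the $j$-th elementary symmetric function and $P \in \dcl(Q)[x]$: constancy of all these $\RV$-valued maps, combined with the fact that types over $\dcl(Q)$ in the $\VF$-sort are determined by the values $\rv(P(\cdot))$ (again by quantifier elimination), forces each $e_j(\fib(X, a))$ to have a constant type over $\dcl(Q)$ as $a$ varies in $\go$. Equivalently, the finite multiset $\fib(X, a)$ has constant type over $\dcl(Q)$.

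The main obstacle is extracting the contradiction from this type-constancy. If the common type is algebraic, then all $e_j(\fib(X, a))$ lie in $\acl(\0)$, so the monic polynomial $P_a(x) = \prod_{b \in \fib(X, a)}(x - b)$ has coefficients in $\acl(\0)$; its roots, which are precisely the elements of $\fib(X, a) \sub \gl$, then lie in $\gl \cap \acl(\0)$, which is empty by atomicity of the infinite $\gl$ (any $\0$-algebraic point of $\gl$ would, by atomicity, put all of $\gl$ inside a finite $Q$-definable set). Hence $\fib(X, a) = \0$ and $X = \0$, contradicting the hypothesis. The non-algebraic case is the delicate one: exchange (Lemma~\ref{exchange}) applied to the interalgebraicity $\fib(X, a) \in \acl(\seq{Q, a})$ (which follows from finite-to-oneness of $\pr_1 \rest X$) together with the non-algebraicity over $\dcl(Q)$ yields $a \in \acl(\seq{Q, \fib(X, a)})$, establishing an interalgebraicity between $a$ and a $k$-tuple drawn from $\gl$. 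Completing the contradiction in this case will involve selecting (via Lemma~\ref{finite:VF:project:RV} and compactness) a $\seq{Q, \lbar t}$-definable branch $g : A_{\lbar t} \fun \gl$ of $X$ with injective graph on a definable subset $A_{\lbar t} \sub \go$, restricting to a proper open subball via Lemma~\ref{subball:atomic} to recover atomicity over an enlarged parameter set, and then invoking Lemma~\ref{hae:component:11} to conclude that the image of this open-ball-type subset in $\gl$ must have only one haecceitistic component of open-ball type---incompatible with the closed-ball or thin-annulus haecceitistic type forced by containment in $\gl$. This last step is the technical heart of the argument.
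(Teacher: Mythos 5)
Your proposal has a genuine gap, and it sits exactly where you yourself defer: the ``technical heart'' of the non-algebraic case is not an application of the cited lemmas but is essentially the statement being proved. Lemma~\ref{hae:component:11} only yields that the image of a one-component atomic set under an \emph{injective} $Q$-definable map has one haecceitistic component --- it does not say that component is an open ball; the stronger ``open goes to open'' conclusion you invoke is Lemma~\ref{atomic:open:ball:open}, which the paper deduces \emph{from} Lemma~\ref{atomic:balls:no:alg:relation}, so your plan is circular at the decisive point. Moreover the map you would feed into Lemma~\ref{hae:component:11} is a ``branch'' of $X$: the hypothesis is only that $\pr_1\rest X$ is finite-to-one, so such a branch $\go\fun\gl$ need not be injective (its fibers over $\gl$ may even be infinite), and nothing in Lemma~\ref{finite:VF:project:RV}, Lemma~\ref{subball:atomic} or compactness repairs this. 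The exchange step is also on soft ground: Lemma~\ref{exchange} is proved over the substructure $S$ using quantifier-free formulas with real parameters, and its validity over a base enlarged by the imaginary ball parameters $Q$ is not established in the paper.

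A second problem is your ``constancy principle.'' Its justification treats a $Q$-definable function on $\go$ as if, after quantifier elimination, its $\VF$-to-$\RV$ content came from terms $\rv(P(a))$ with $P$ over $\dcl(Q)[x]$ whose roots avoid $\go$ (and you even write $\acl(\0)$ where at best $\acl(Q)$ is relevant). But $Q$ consists of balls used as imaginary parameters: syntactically they enter through existentially quantified representatives, so the polynomial terms involve those representative variables, and controlling such terms when the representatives interact with $\go$ is precisely the difficulty --- indeed the paper's Lemma~\ref{atomic:open:no:atomic:closed} handles exactly this configuration by \emph{appealing to} Lemma~\ref{atomic:balls:no:alg:relation}. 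For contrast, the paper's proof avoids all of this: it fixes the set $\gM$ of maximal open subballs of $\gl$, shows via atomicity of $\gl$ that the number of boolean components of each $A_{\gx}=\pr_1((\pr_2\rest X)^{-1}(\gx))$, their minimal radius $\lambda$ and their separation $\rho$ are constant, uses openness of $\go$ plus compactness (this is where finite-to-one enters) to force $\lambda<\rad(\go)$, and then gets a contradiction with the constancy of $\rho$ by comparing $A_{\gx}$ with a proper open subball $\gz$ of $\go$ containing it. If you want to salvage your approach, you would have to prove the constancy principle and the open-to-closed incompatibility directly, which amounts to redoing an argument of this geometric kind rather than citing the later lemmas.
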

\begin{proof}
We assume that $\gl$ is a closed ball, since the proof for thin
annuli is identical. Suppose for contradiction that there is definable $A \sub \go \times \gl$ such that $\pr_1 \rest A$ is finite-to-one. Since
$\go$ and $\gl$ are atomic, we must have $\pr_1 (A) = \go$ and
$\pr_2 (A) = \gl$. Let $\gM$ be the set of maximal open subballs of
$\gl$, which is definable. For any $\dot \gx \in \gM$, let $A_{\gx} = \bigcup_{b \in \gx}\fib(A, b) \sub \go$. By \cmin-minimality each $A_{\gx}$ is a boolean combination of balls. In fact, for any $\dot \gx, \dot \gy \in \gM$, $A_{\gx}$ and $A_{\gy}$ must have the same
number of boolean components, because otherwise there would be a definable proper subset of $\gl$. Let this number be $k$.

Let $\dot \gx \in \gM$ and suppose that $\gB = \set{\gb_1, \ldots,
\gb_k}$ is the set of the boolean components of $A_{\gx}$. Set $\lambda_{\gx} = \min \set{\rad(\gb_1), \ldots, \rad(\gb_k)}$. For any $\gb_i, \gb_j \in \gB$, let
\[
\begin{cases}
  \rho(\gb_i, \gb_j) = \min \set{\rad(\gb_i), \rad(\gb_j)}, & \text{if } \gb_i \cap \gb_j \neq \0,\\
  \rho(\gb_i, \gb_j) = \vv(\gb_i - \gb_j), & \text{otherwise}.
\end{cases}
\]
Let
\[
\rho_{\gx} = \min \set{\rho(\gb_i, \gb_j) : \gb_i, \gb_j \in \gB}.
\]
Note that the subsets $\Lambda = \set{\lambda_{\gx} : \dot \gx \in \gM} \sub \Gamma$ and $\Delta = \set{\rho_{\gx} : \dot \gx \in \gM} \sub \Gamma$ are both definable. Since $\gl$ is atomic, we must have that both
$\Lambda$ and $\Delta$ are singletons, say $\Lambda =
\set{\lambda}$ and $\Delta = \set{\rho}$.

We claim that $\lambda > \rad(\go)$. To see this, suppose for contradiction
$\lambda_{\gx} = \rad(\go)$ for every $\dot \gx \in \gM$. This means
that every $A_{\gx}$ has $\go$ as a positive boolean component. Since $\go$ is open, we have that for any $\dot \gx_1, \ldots, \dot \gx_n \in \gM$ the intersection $\bigcap_{i \leq n} A_{\gx_i}$ is nonempty and hence there are $b_i \in \gx_i$ such that $(a, b_i) \in A$. Therefore, by compactness,
there is an $a \in \go$ such that $\fib(A, a)$ is infinite, contradicting the assumption that $\pr_1 \rest A$ is finite-to-one.

Now, fix an $\dot \gx \in \gM$. Since $\go$ is open and $\pr_1 (A) = \go$, there is a proper open subball $\gz$ of $\go$ that properly contains
$A_{\gx}$. Let $B_{\gz} = \bigcup_{a \in \gz}\fib(A, a) \sub \gl$. Since $B_{\gz}$ properly contains the maximal open subball $\gx$ of
$\gl$, either $\gx$ is a boolean component of
$B_{\gz}$ that is disjoint from any other boolean component of
$B_{\gz}$ or $\gl$ is a positive boolean component of $B_{\gz}$.
However, the former is impossible, because in that case $B_{\gz}$
could only have finitely many maximal open subballs of $\gl$ as
its positive boolean components and consequently, since $\Lambda =
\set{\lambda}$ and $\lambda > \rad(\go)$, $\gz$ could not be an open ball that properly contains $A_{\gx}$, contradiction. So we must have that $\gl$ is a positive boolean component of $B_{\gz}$. This means that $B_{\gz}$ can only have finitely many maximal open subballs of
$\gl$ as its negative boolean components, say $\gx_1, \ldots,
\gx_n$. Again, since $\Lambda = \set{\lambda}$ and $\lambda >
\rad(\go)$, $\bigcup_{i \leq n} A_{\gx_i}$ must be a proper subset
of $\go \mi \gz$ and hence there is a $\dot \gy \in \gM$ such that $\gy
\sub B_{\gz}$ and $A_{\gy}$ has a boolean component contained in
$\gz$ and another boolean component disjoint from $\gz$. This
implies that $\rho_{\gy} \leq \rad (\gz)$. On the other hand,
since $A_{\gx} \sub \gz$ and $\gz$ is an open ball, we have $\rho_{\gx} > \rad (\gz)$. This is a contradiction since $\Delta$ is a singleton.
\end{proof}

\begin{cor}\label{atomic:no:function}
There are no definable finite-to-one functions from an atomic closed ball or an atomic thin annulus to an atomic open ball. There are no definable functions at all from an atomic open ball to an atomic closed ball of radius $< \infty$ or an atomic thin annulus.
\end{cor}

Combining Lemma~\ref{hae:component:11} and Corollary~\ref{atomic:no:function}, we get:

\begin{lem}\label{atomic:open:ball:open}
Let $A \sub \VF$ be an atomic open ball and $f : A \fun \VF$ a definable function. Then either $f$ is constant or $f(A)$ is also an atomic open ball.
\end{lem}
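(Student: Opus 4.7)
The plan is to reduce everything to lemmas already established in this section. First I would show that $f(X)$ is itself atomic over $\seq{Q}$. This should follow almost tautologically from the fact that $f$ is a $Q$-definable bijection onto $f(X)$: any proper $Q$-definable subset $Y \subseteq f(X)$ would pull back via $f^{-1}$ to a proper $Q$-definable subset $f^{-1}(Y) \subseteq X$, contradicting the atomicity of $X$ over $\seq{Q}$.

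Next, since $X$ is a single open ball, it has exactly one haecceitistic component. Applying Lemma~\ref{hae:component:11} then gives that $f(X)$ also has a single haecceitistic component. Combined with Lemma~\ref{atomic:subset:types}, this forces $f(X)$ to be either a single open ball, a single closed ball (possibly of radius $\infty$, i.e.\ a point), or a thin annulus. The point case is ruled out because $X$ is infinite and $f$ is injective, so $f(X)$ is infinite.

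The remaining task is to rule out the closed-ball and thin-annulus cases, and for this I would invoke Lemma~\ref{atomic:balls:no:alg:relation} with $\go = X$ and $\gl = f(X)$. Both are atomic over $\seq{Q}$ by the preceding steps, and the graph of $f$ is a $Q$-definable subset of $\go \times \gl$ on which the first-coordinate projection is a bijection --- in particular finite-to-one --- contradicting the conclusion of that lemma. So the only surviving possibility is that $f(X)$ is an open ball.

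The substantive work has already been absorbed into Lemma~\ref{atomic:balls:no:alg:relation} and Lemma~\ref{hae:component:11}; the role of the present lemma is simply to combine them with Lemma~\ref{atomic:subset:types}. I do not anticipate any real obstacle here beyond verifying the straightforward atomicity-transfer step in the first paragraph.
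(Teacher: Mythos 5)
Your proposal is correct and follows essentially the same route as the paper: the paper's proof simply cites Lemma~\ref{hae:component:11} (which itself rests on Lemma~\ref{atomic:subset:types}) to conclude $f(X)$ is an open ball, closed ball, or thin annulus, and then Lemma~\ref{atomic:balls:no:alg:relation} applied to the graph of $f$ to rule out all but the open-ball case. Your explicit verification that atomicity transfers to $f(X)$ and that the point case is excluded just spells out steps the paper leaves implicit.
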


\begin{lem}\label{atomic:open:no:atomic:closed}
Let $\gb$ be a $\dot \gb$-atomic open ball. Let $A \sub \VF$ be an infinite $\dot \gb$-atomic subset with only one primitive component. Suppose that the only imaginary parameter needed to define $A$ is $\dot \gb$. Then $A$ is not a closed ball.
\end{lem}
\begin{proof}
Suppose for contradiction that $A$ is a closed ball. There is a quantifier-free $\lan{RV}$-formula $\psi(X, Y)$ such that $\psi(X, b)$ defines $A$ for every $b \in \gb$. Let $F_i(X, Y)$ be the occurring polynomials of $\psi(X, Y)$. Since $\gb$ and $A$ are $\dot \gb$-atomic, we may assume that every $F_i(X, b)$ and $F_i(a, Y)$ are nonzero for all $b \in \gb$ and $a \in A$. Also note that for all $a \in A$, $b \in \gb$, and $F_i(X, Y)$ we have $F_i(a, b) \neq 0$, for otherwise there would be a $\dot \gb$-definable $C \sub \gb \times A$ with $\pr_1 \rest C$ finite-to-one, contradicting Lemma~\ref{atomic:balls:no:alg:relation}. Now fix a $b \in \gb$. Since all the roots of all $F_i(X, b)$ lie outside of $A$ and $A$ is a closed ball, by Lemma~\ref{approx:roots}, there is a $d \notin A$ such that $\rv(F_i(d, b)) = \rv(F_i(A, b))$ for all $F_i(X, b)$. So $d$ also satisfies $\psi(X, b)$, contradiction.
\end{proof}

\begin{lem}\label{complete:type:open:ball}
Let $A \sub \VF$ generate a complete type. Let $f : \VF \fun \VF$ be a definable function such that $f \rest A$ is injective. Then for every open ball $\gb \sub A$ the image $f(\gb)$ is also an open ball.
\end{lem}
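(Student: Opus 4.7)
The plan is to reduce to Lemma~\ref{atomic:open:ball:open}. By enlarging the base substructure via compactness, I may assume that $f$ is $S$-definable and that $X = p(\gC)$ is the full realization set of the complete type $p$ over $S$. Fix an open ball $\gb \sub X$ with $\gamma = \rad(\gb)$ and a point $a_0 \in \gb$. The goal is to exhibit $f(\gb)$ as a directed union of open balls, hence itself as an open ball.

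The key claim is that for any $\gamma' \in \Gamma$ with $\gamma < \gamma'$ chosen generic over $\seq{S, a_0, \gamma}$ (transcendental over this parameter set in $\Gamma$, available by saturation of $\gC$), the open subball $\gb_0 := \set{x \in \VF : \vv(x - a_0) > \gamma'}$ is atomic over $\seq{\gb_0}$, so by Lemma~\ref{atomic:open:ball:open} its image $f(\gb_0)$ is an open ball. Letting $\gamma'$ vary, the corresponding subballs $\gb_0$ form a nested chain of open subballs of $\gb$ around $a_0$; since any $a \in \gb$ satisfies $\vv(a - a_0) > \gamma$ and hence lies in $\gb_0$ as soon as $\gamma' < \vv(a - a_0)$, we have $\gb = \bigcup \gb_0$ and therefore $f(\gb) = \bigcup f(\gb_0)$. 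The open balls $f(\gb_0)$ all contain the common point $f(a_0)$ and are nested (as $\gb_0 \sub \gb_0'$ forces $f(\gb_0) \sub f(\gb_0')$), so their directed union is a single open ball of $\VF$ by the tree structure of balls in $\ACVF$.

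To establish the atomicity claim, I argue that $\gb_0 \notin \acl(\seq{\gamma'})$. Were $\gb_0$ to lie in a $\seq{\gamma'}$-definable finite set of open balls of radius $\gamma'$, then $a_0$ would lie in a $\seq{\gamma'}$-definable finite union of such balls, contradicting the non-algebraicity of $a_0$ over $\seq{\gamma'}$. The latter follows from the stable embedding and orthogonality of the sorts $\VF$ and $\Gamma$ in $\ACVF$: since $\gamma'$ is transcendental over $\seq{S, a_0}$ in $\Gamma$, the $\VF$-part of the type of $a_0$ over $\seq{\gamma'}$ coincides with the type $p = \mathrm{tp}(a_0/S)$, which is non-algebraic because $X = p(\gC)$ is infinite. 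Applying Lemma~\ref{atomic:over:its:name} to the $\seq{\gamma'}$-definable pairwise-disjoint family $T$ of all open balls of radius $\gamma'$, with $\phi(t) = t$, then yields the atomicity of $\gb_0$ over $\seq{\gamma', \gb_0}$, which equals $\seq{\gb_0}$ since $\gamma' = \rad(\gb_0) \in \dcl(\seq{\gb_0})$.

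The main obstacle is the atomicity claim: its proof requires invoking the orthogonality and stable embedding of $\VF$ and $\Gamma$ in $\ACVF$ to ensure that a generic transcendental $\gamma' \in \Gamma$ does not cause $a_0$ (or $\gb_0$) to become algebraic over $\seq{\gamma'}$. A secondary technical point, handled routinely by the tree structure of balls in $\ACVF$, is that a nested family of open balls sharing a common point unions to a single open ball.
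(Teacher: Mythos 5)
There are two genuine gaps, and the first is exactly at the point you flag as the main obstacle. The inference ``were $\gb_0$ to lie in a $\seq{\gamma'}$-definable finite set of open balls, then $a_0$ would lie in a $\seq{\gamma'}$-definable finite union of such balls, contradicting the non-algebraicity of $a_0$ over $\seq{\gamma'}$'' is a non sequitur: a finite union of balls is an infinite set, so containing $a_0$ says nothing about $a_0 \in \acl(\seq{\gamma'})$. What you actually need is that no $\seq{\gamma'}$-definable set can contain $a_0$ without containing $\gb$ (then a finite union of balls of radius $\gamma' > \gamma$ cannot contain the ball $\gb$ of radius $\gamma$, and you are done). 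That statement --- that adjoining a generic $\Gamma$-parameter does not refine the type generated by $X$ --- is the real content, and the appeal to ``stable embedding and orthogonality of $\VF$ and $\Gamma$'' does not deliver it: no such fact is proved in this paper, and as phrased it is not even correct ($\VF$ and $\Gamma$ are tied together by the valuation; the results the paper does prove in this direction, such as Lemma~\ref{atomic:base:change} and Lemma~\ref{subball:atomic}, concern \emph{definable} atomic sets, whereas your $X$ and $\gb$ need not be definable). If you prove the needed fact honestly --- by quantifier elimination, observing that $\gamma'$ can only enter through $\RV$-sort literals whose terms $\rv(g(x))$ are constant on $\gb$ because every root of $g$ lies in $\acl(\0) \cap \VF$, which is disjoint from $X$ --- you end up reproducing precisely the syntactic argument the paper uses to show directly that $\gb$ itself is not $\gamma$-algebraic (whence $\gb$ is atomic over $\seq{\gamma, \gb}$ by Lemma~\ref{atomic:over:its:name} and Lemma~\ref{atomic:open:ball:open} applies to $\gb$ itself); the detour through generic-radius subballs then buys nothing and only adds the union step.

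That union step is the second gap: a nested family of open balls sharing a common point does \emph{not} in general union to an open ball --- the infimum of the radii need not be realized in $\Gamma$, so ``the tree structure of balls'' gives only a chain union, which may fail to be a ball at all. The step can be repaired, but only by using that $f(\gb)$ is parametrically definable: the set $\set{\vv(y - f(a_0)) : y \in f(\gb)}$ is an up-closed definable subset of $\Gamma \cup \set{\infty}$, hence by $o$-minimality an interval with endpoint in $\Gamma \cup \set{\pm\infty}$, and one must still rule out the closed-ball and whole-$\VF$ alternatives. None of this is in your write-up, and as stated the final sentence of your second paragraph is false. In short: the skeleton (reduce to Lemma~\ref{atomic:open:ball:open} via Lemma~\ref{atomic:over:its:name}) matches the paper, but the two justifications you supply in place of the paper's direct non-algebraicity argument do not hold up.
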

\begin{proof}
For any open ball $\gb \sub A$, by Lemma~\ref{subball:atomic}, $\gb$ is $\dot \gb$-atomic. By Lemma~\ref{atomic:open:ball:open}, $f(\gb)$ is an open ball.
\end{proof}

Now we introduce a (coarsened) variation of $\rv$-linearity. Let $\ga$ be an open ball and $f : \ga \fun \VF$ an injection. We say that $f$ is \emph{$\Gamma$-linear} if there is a $\gamma \in \Gamma$ such that $\vv(f(a) - f(a')) = \gamma + \vv(a - a')$ for any $a, a' \in \ga$. Obviously if $f$ is $\Gamma$-linear then there is only one $\gamma \in \Gamma$ that satisfies the requirement.

\begin{lem}\label{atomic:ball:shrink:RV:cons}
Let $f : \ga \fun \gb$ be a definable bijection between two atomic open balls. Then $f$ is $\Gamma$-linear with respect to $\rad(\gb) - \rad(\ga)$.
\end{lem}
\begin{proof}
Let $U = \{t \in \RV : \vrv(t) > \rad(\ga)\}$ and $V = \{t \in \RV : \vrv(t) > \rad(\gb)\}$. By Lemma~\ref{complete:type:open:ball}, $f$ has the open-to-open property. So, for any $a \in \ga$, the $a$-definable function $f_a : (\ga - a) \fun (\gb - f(a))$ given by $a' -a \efun f(a') - f(a)$ naturally induces a bijection $h_a : U \fun V$. Note that, for any $t, t' \in U$, $\vrv(t) = \vrv(t')$ if and only if $\vrv(h_a(t)) = \vrv(h_a(t'))$. Since $\ga$ is atomic, by Lemma~\ref{atom:open:RV:constant}, all these bijections $h_a$ are actually the same and hence may be denoted by $h$. Let $\hat h : U \fun \RV$ be the function given by $t \efun h(t)/ t$. Let $a, b \in \ga$ and $u \in U$ such that $\rv(b - a) = u$. Since $\rv^{-1}(u) = \rv^{-1}(2u) + a - b$, we have
\[
f_a(\rv^{-1}(2u)) - f_a(\rv^{-1}(u)) = f_b(\rv^{-1}(u)).
\]
So $h(2u) = 2h(u)$. In fact, by the same argument, we see that $h(ku) = kh(u)$ and hence $\hat h(ku) = \hat h(u)$ for all nonzero integer $k$. It follows from strong minimality that, for each $\gamma > \rad(\ga)$, the restriction $\hat h \rest (\vrv^{-1}(\gamma) \cap U)$ is constant and hence $\hat h$ may be regarded as a function from the interval $(\rad(\ga), \infty)$ into $\RV$. By Corollary~\ref{function:gamma:to:rv:finite:image}, $\hat h(U)$ is finite. Since if $\vrv(t) \leq \vrv(t')$ then $\vrv(\hat h(t)) \leq \vrv(\hat h(t'))$, we must have $\vrv(\hat h(U)) = \{\rad(\gb) - \rad(\ga)\}$.
\end{proof}

\begin{cor}\label{rv:linear:rad}
Let $f : \ga \fun \gb$ be a definable bijection between two open balls. For each $a \in \ga$ let $\delta_a \in \Gamma \cup \{\infty\}$ be the least such that $f \rest \go(a, \delta_a)$ is $\Gamma$-linear. Then for no $\beta > \rad(\ga)$ do we have $\beta < \delta_a$ for every $a \in \ga$.
\end{cor}
\begin{proof}
Suppose for contradiction that there is a $\beta > \rad(\ga)$ such that $\beta < \delta_a$ for every $a \in \ga$. By Lemma~\ref{atomic:over:its:name} there is an atomic open ball $\gd \sub \ga$ with $\rad(\gd) = \beta$. By Lemma~\ref{atomic:ball:shrink:RV:cons}, $f \rest \gd$ is $\Gamma$-linear, contradiction.
\end{proof}

For our purposes in this paper, $\Gamma$-linearity is enough. On the other hand, some of the results still hold if we replace $\Gamma$-linearity with $\rv$-linearity, because we have the following strengthening of Lemma~\ref{atomic:ball:shrink:RV:cons}:

\begin{lem}\label{atomic:ball:rv:linear}
Let $f : \ga \fun \gb$ be a definable bijection between two atomic open balls. Then $f$ is $\rv$-linear.
\end{lem}
\begin{proof}
Let $f_a, U, V, \hat h$ be as in the proof of Lemma~\ref{atomic:ball:shrink:RV:cons} and $\gamma = \rad(\gb) - \rad(\ga)$. Fix an $e \in \ga$ and we shall work with $\ga - e = \rv^{-1}(U)$. Let $\delta \in \Gamma$ be the least such that $\hat h \rest (\delta, \infty)$ is constant. We may assume $\delta > \rad(\ga)$ (otherwise we are done). Let $\hat h (\delta, \infty) = r$ and we claim that $\hat h(\delta) = r$. To see this, we first choose a $c \in \rv^{-1}(\hat h(\delta))$. Let $t \in U$ such that $\vrv(t) = \delta$ and $\rv^{-1}(t)$ is atomic over $\dcl(e, c, t)$, which is possible by Lemma~\ref{atomic:over:its:name}. Hence, by Lemma~\ref{atom:open:RV:constant}, the set $\{\rv(f_e(a) - ca) : a \in \rv^{-1}(t)\}$ is a singleton, say $\{s\}$. Note that $\vrv(s) > \delta + \gamma$. Consider any $b \in \rv^{-1}(t)$ and any $b' \in \go(b, \delta)$ with $\vv(b' - b) < \vrv(s) - \gamma$. Since
\[
\delta + \gamma < \vv(c(b' - b)) < \vrv(s) \quad \text{and} \quad \vv((f_e(b') - c b') - (f_e(b) - c b)) > \vrv(s),
\]
we must have $\rv(f_e(b') - f_e(b)) = \rv(c b' - c b)$ and hence $\rv(c) = \hat h(\rv(b' - b)) = r$.

Now we need to show that $\hat h$ is constant. The argument is very similar to the one above. Suppose for contradiction that $\hat h$ is not constant. Then there are definable $r' \neq r \in \hat h(U)$ and $\delta' < \delta \in \Gamma$ such that $\hat h[\delta, \infty) = r$ and $\hat h(\delta', \delta) = r'$. Let
\[
A = \{a \in \ga - e : \delta' < \vv(a) < \delta\}.
\]
Choose a $c \in \rv^{-1}(r')$ and let $g : A \fun \RV$ be the function given by $a \efun \rv(f_e(a) - ca)$. For notational simplicity, we may assume $e, c \in S$. By Lemma~\ref{alg:ind:imag} and Lemma~\ref{atomic:over:its:name}, if $\alpha < \delta$ is sufficiently close to $\delta$ (that is, larger than every definable element in $(\delta', \delta)$) and $\vrv(t) = \alpha$ then $\rv^{-1}(t)$ is $t$-atomic and hence, by Lemma~\ref{atom:open:RV:constant}, $g \rest \rv^{-1}(t)$ is constant. For each $\delta' < \alpha < \delta$ let $\zeta(\alpha + \gamma)$ be an $\alpha$-definable element in $\{\vrv(g(a)) : \vv(a) = \alpha\}$. Note that $\zeta(\alpha + \gamma) > \alpha + \gamma$. By quantifier elimination (in $\lan{v}$), the definable function $\zeta : (\delta' + \gamma, \delta + \gamma) \fun \Gamma$ is piecewise linear. This means that there is a definable $\alpha \in (\delta', \delta)$ such that $\zeta \rest (\alpha + \gamma, \delta + \gamma)$ is given by a linear equation and hence, if $\beta + \gamma \in (\alpha + \gamma, \delta + \gamma)$ is sufficiently close to $\delta + \gamma$ then $\zeta(\beta + \gamma) > \delta + \gamma$. Let $\beta < \delta$ be sufficiently close to $\delta$ and $t \in \vrv^{-1}(\beta)$ such that $\zeta(\beta + \gamma) = \vrv(g(\rv^{-1}(t)))$. For any $b \in \rv^{-1}(t)$ and any $b' \in \go(b, \delta)$, \[
\text{if} \quad \delta < \vv(b' - b) < \zeta(\beta + \gamma) - \gamma \quad \text{then} \quad \delta + \gamma < \vv(c(b' - b)) < \zeta(\beta + \gamma).
\]
In this case, since
\[
\vv((f_e(b') - c b') - (f_e(b) - c b)) > \zeta(\beta + \gamma),
\]
we have $\rv(f_e(b') - f_e(b)) = \rv(c b' - c b)$ and hence $\rv(c) = \hat h(\rv(b' - b)) = r$, contradiction.
\end{proof}

The above lemma is~\cite[Lemma~5.5]{hrushovski:kazhdan:integration:vf}, which plays a very important role in ~\cite{hrushovski:kazhdan:integration:vf} but is not needed here.

\begin{prop}\label{partition:open:to:open}
Let $f : A \fun B$ be a definable bijection between two subsets of $\VF$. Then there is a partition of $A$ into definable subsets $A_1, \ldots, A_n$ such that, for all $\ga \sub A_i$, $\ga$ is an open ball if and only if $f(\ga)$ is an open ball (we say that each $f \rest A_i$ has the \emph{open-to-open property}).
\end{prop}
\begin{proof}
For every $a \in A$ let $D_a \sub A$ be the intersection of all
definable subsets of $A$ that contains $a$. So $D_a$ generates a complete type. By Lemma~\ref{complete:type:open:ball}, for every
open ball $\ga \sub D_a$, the image $f(\ga)$ is an open ball. By compactness, this property holds in a definable subset $A_a \sub A$ that contains $a$. By compactness
again, there are definable subsets $A_1, \ldots, A_m \sub A$ with
$\bigcup A_i = A$ such that each $A_i$ has this property. Similarly there are definable subsets $B_1, \ldots, B_l \sub B$ with $\bigcup B_i = B$ such that each $B_i$ has this property (with respect to $f^{-1}$). The partition of $A$ generated by $A_1, \ldots, A_m$, $f^{-1}(B_1), \ldots, f^{-1}(B_l)$ is as desired.
\end{proof}


It is more convenient to deal with the open-to-open property in the above proposition in more general terms:

\begin{defn}\label{def:open:open}
Let $A \sub \VF^{n_1} \times \RV^{m_1}$, $B \sub \VF^{n_2} \times
\RV^{m_2}$, and $f : A \fun B$ a bijection. Let $i \in I_{n_1}$ and $j \in I_{n_2}$. For any $\lbar a
\in \pr_{\widetilde{i}}(A)$ and any $\lbar b \in \pr_{\widetilde{j}}(B)$,
let
\[
f_{\lbar a, \lbar b} = f \rest (\fib(A, \lbar a) \cap f^{-1}(\fib(B, \lbar b))).
\]
We say that $f$ has the \emph{$(i,j)$-open-to-open} property if, for every $\lbar a \in \pr_{\widetilde{i}}(A)$ and every $\lbar b \in \pr_{\widetilde{j}}(B)$, $f_{\lbar a, \lbar b}$ has the open-to-open property. If $f$ has the
$(i,j)$-open-to-open property for every $(i,j) \in I_{n_1} \times I_{n_2}$ then $f$ has the open-to-open property.
\end{defn}

With this understanding, Proposition~\ref{partition:open:to:open}
may be easily generalized as follows:

\begin{prop}\label{open:to:open:parameter}
Let $A$, $B$, and $f$ be as above. Then there is a partition of $A$ into definable subsets $A_1, \ldots, A_n$ such that every $f \rest A_i$ has the open-to-open property.
\end{prop}
\begin{proof}
First observe that if $f$ has the $(i,j)$-open-to-open property
then, for every subset $A^* \sub A$, $f \rest A^*$ has the $(i,j)$-open-to-open property. Now, by Proposition~\ref{partition:open:to:open}, for any $\lbar a \in
\pr_{>1}(A)$ and $\lbar b \in \pr_{>1}(B)$ there is a partition of $\dom
(f_{\lbar a, \lbar b})$ into $(\lbar a, \lbar b)$-definable subsets $V_1, \ldots, V_n$ such that each $f_{\lbar a, \lbar b} \rest V_i$ has the open-to-open property. From here on it is routine to use compactness to obtain a definable partition $A^{1,1}_1, \ldots, A^{1,1}_m$ of $A$ such that each $f \rest A^{1,1}_i$ has the
$(1,1)$-open-to-open property. Iterating this procedure for each
$(i, j) \in I_{n_1} \times I_{n_2}$ on each piece of the partition obtained in the previous step, we eventually get a partition of $A$ that is as desired.
\end{proof}

A closed ball with $k$ maximal open subballs removed is called a \emph{thin punctured ball}. Of course if $k=0$ then it is just a closed ball and if $k=1$ then it is just a thin annulus.
\begin{lem}\label{thin:ball:closed:match}
Let $f : A \fun B$ be a definable bijection that has the open-to-open property, where $A$ and $B$ are disjoint unions of thin punctured balls. Then $A$ and $B$ have the same number of positive boolean components and the same number of negative boolean components.
\end{lem}
\begin{proof}
Let $\ga_1, \ldots, \ga_n$ be the positive boolean components of $A$ and $\gb_1, \ldots, \gb_m$ the positive boolean components of $B$. If $\gm \sub \ga_i$ is a maximal open subball of $\ga_i$ then $f(\gm)$ must be a maximal open subball of some $\gb_j$, and vice versa. So there is a parametrically $\lan{RV}$-definable bijection $g$ from a definable cofinite subset of $\bigcup_{1 \leq i \leq n} \K \times \{i\}$ into a definable cofinite subset of $\bigcup_{1 \leq i \leq m} \K \times \{i\}$. Note that $g$ is a constructible function in the sense of algebraic geometry (see~\cite[Remark~7.5]{Yin:special:trans}). If $n < m$ then we would be able to extend $g$, possibly with additional parameters, to a constructible injective but not surjective function from $\bigcup_{1 \leq i \leq m} \K \times \{i\}$ into itself, violating Ax's theorem (see~\cite[3.G$^\prime$]{gromov:symbolic:var}); similarly if $n > m$. So $n = m$.

By strong minimality, for each $i \leq n$ there are a finite subset $C_i \sub \K \times \{i\}$ and a $j \leq n$ such that $g \rest (\K \times \{i\} \mi C_i)$ is a regular injection into $\K \times \{j\}$. By the conservation property~~\cite[3.G$^{\prime \prime}$]{gromov:symbolic:var}, $C_i$ and $\K \times \{j\} \mi g(\K \times \{i\} \mi C_i)$ have the same size. Since $\bigcup_i C_i$ is finite, it follows that $A$ and $B$ must have the same number of holes.
\end{proof}

The following proposition somehow completes Lemma~\ref{atomic:balls:no:alg:relation}: it justifies the feeling that atomic open balls, atomic closed balls, and atomic thin annuli are geometrically distinct.

\begin{prop}\label{no:bijec:atomic:primi}
There can be no definable bijection between any pair of the following: an atomic open ball, an atomic closed ball, and an atomic thin annulus.
\end{prop}
\begin{proof}
The case of an atomic closed ball and an atomic thin annulus follows from Lemma~\ref{complete:type:open:ball} and Lemma~\ref{thin:ball:closed:match}. The other two cases follow from Lemma~\ref{atomic:balls:no:alg:relation}.
\end{proof}

In fact this proposition is true without the atomicity condition. But even that is a special case that follows from a major construction in the Hrushovski-Kazhdan theory, namely the Euler characteristics. This construction in effect fuses together two Euler characteristics: that of the theory of algebraically closed fields of characteristic $0$ and that of the theory of densely ordered abelian groups. The former is essentially what is used in the proof of Lemma~\ref{thin:ball:closed:match}. Similarly we may use the latter in certain simple situations, for example:

\begin{prop}\label{exam:top:inv}
Let $f : \ga \fun \gb$ be a definable bijection, where $\ga$ is a punctured closed ball with one open hole and $\gb$ is a punctured open ball with one open hole. Then $f$ cannot have the open-to-open property.
\end{prop}
\begin{proof}
Suppose for contradiction that $f$ is an open-to-open function. Let $a$ be a point in the hole of $\ga$ and $b$ a point in the hole of $\gb$. Set $A = \rv(\ga - a)$ and $B = \rv(\gb - b)$. Then $f$ induces a parametrically definable bijection $f' : A \fun B$. By strong minimality, for each $\gamma \in \vrv(A)$ there is a unique $f''(\gamma) \in \vrv(B)$ such that
\[
C_{\gamma} = \vrv^{-1}(f''(\gamma)) \mi f'(\vrv^{-1}(\gamma))
\]
is finite. By compactness the sizes of these finite subsets are bounded. Therefore $f'' : \vrv(A) \fun \vrv(B)$ is a parametrically definable bijection. Note that $f''$ is parametrically definable in $\Gamma$. There are three cases in accordance with the lengths of the half-open interval $\vrv(A)$ and the open interval $\vrv(B)$. Any of these cases would contradict the fact that the Grothendieck ring of $\Gamma$ is nontrivial (see~\cite{kage:fujita:2006}).
\end{proof}

The construction of the Euler characteristics in the Hrushovski-Kazhdan theory will be presented in a sequel.

\section{$2$-cells}\label{section:2unit}

From now on we are back in $\gC$.

The notion of a $2$-cell is studied in this section. It is crucial for our Fubini's theorem (see Lemma~\ref{contraction:perm:pair:isp}).

\begin{lem}\label{inverse:special:dim:1}
Let $A \sub \VF \times \RV^m$ be a definable subset and $T$ a special bijection on $A$ such that $T(A) = A^{\sharp}$ is an $\RV$-pullback. Then there is a definable function $\epsilon : \prv (A^{\sharp}) \fun \VF$ such that, for every $\rv$-polydisc $\rv^{-1}(t) \times \set{(t, \lbar s)} \sub A^{\sharp}$, we have
\[
(\pvf \circ T^{-1})(\rv^{-1}(t) \times \set{(t, \lbar s)}) =
\rv^{-1}(t) + \epsilon(t, \lbar s).
\]
\end{lem}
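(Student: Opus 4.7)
The plan is to induct on the total number of elementary components (centripetal transformations and canonical bijections) composing $T$. The key observation is that each canonical bijection preserves the $\VF$-coordinate (only recording its $\rv$-value as a new $\RV$-coordinate), while each centripetal transformation translates the $\VF$-coordinate by its focus map, a function of the $\RV$-coordinates alone. Consequently, $T^{-1}$ acts on the $\VF$-coordinate of any point of $T(X)$ by a translation whose amount is a definable function of the $\RV$-coordinates. Restricted to an entire $\rv$-polyball of $T(X)$, this translation is constant in the $\VF$-coordinate, yielding the desired $\epsilon(t, \lbar s)$.

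For the induction to go through cleanly, it is convenient to strengthen the conclusion: there exist definable $\epsilon$ and $\tau$ such that $T^{-1}$ restricted to each polyball $\rv^{-1}(t) \times \set{(t, \lbar s)}$ is literally the map $(b, t, \lbar s) \mapsto (b + \epsilon(t, \lbar s), \tau(t, \lbar s))$. The base case $T = \id$ is immediate with $\epsilon \equiv 0$. For the inductive step, write $T = \sigma \circ T'$ with $\sigma$ the final elementary component. From the $\RV$-product structure of $T(X)$ and the decomposition $\eta(X) = \eta(C \cap X) \sqcup (X \setminus C)$ in the centripetal case, one checks that $T'(X)$ is itself an $\RV$-product, so the strengthened inductive hypothesis applies to $T'$ and yields $\epsilon'$ and $\tau'$. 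If $\sigma = \can$, then $\can^{-1}$ forgets an $\RV$-coordinate without altering the $\VF$-coordinate, so reindexing gives $\epsilon$ and $\tau$ directly. If $\sigma = \eta$ is a centripetal with focus $\lambda$ and locus $C$, then $\eta^{-1}$ translates each polyball $\rv^{-1}(t) \times \set{(t, \lbar s)}$ of $T(X)$ uniformly --- by $\lambda$ evaluated at the appropriate $\RV$-coordinates when $\vrv(t) > \vrv(t_1)$ (where $t_1$ is the locus coordinate recorded in $\lbar s$), and by $0$ otherwise. The resulting translated set lies inside a polyball of $T'(X)$, on which the strengthened inductive hypothesis states that $(T')^{-1}$ acts as a constant shift by $\epsilon'$; composing yields the total $\epsilon = \epsilon' + \lambda$ or $\epsilon = \epsilon'$ accordingly, and the dichotomy is visibly definable in $(t, \lbar s)$.

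The main subtlety lies in the centripetal case: one must verify both that the behavior of $\eta^{-1}$ is uniform on each polyball of $T(X)$, and that the resulting translated subset lies inside a single polyball of $T'(X)$ so that the strengthened inductive hypothesis can be invoked. The former follows from the structure of $C^{\sharp}$, which at each locus fiber $\lbar t \in \pr_{>1} C$ is precisely the open ball $\set{a : \vv(a) > \vrv(t_1)}$: a polyball $\rv^{-1}(t) \times \set{\lbar t}$ of $T(X)$ lies either fully inside this open ball (when $\vrv(t) > \vrv(t_1)$, forcing it into $\eta(C \cap T'(X))$) or fully disjoint from $\rv^{-1}(t_1)$ (otherwise, placing it in $T'(X) \setminus C$). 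The latter is exactly the reason to strengthen the inductive hypothesis to record $T^{-1}$ as a translation rather than merely its image, since the bare image of a polyball under $(T')^{-1}$ would not suffice to recover the image of a proper translated subball.
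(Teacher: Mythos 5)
Your outside-in induction breaks at the step where you assert that ``one checks that $T'(X)$ is itself an $\RV$-product'': this is false when the component being peeled off is a centripetal transformation, and the assertion is load-bearing, because your strengthened inductive hypothesis only speaks about polyballs of an $\RV$-product image. Concretely, take $X = (\gb_1 \cup \gb_2) \times \set{t}$ where $\gb_1, \gb_2 \sub \rv^{-1}(t)$ are disjoint open balls around definable points $c_1, c_2$ of radius strictly greater than $\vv(c_1 - c_2)$; let the first centripetal transformation translate by the average $c$ of $c_1, c_2$ (locus $\rv^{-1}(t) \times \set{t}$) and the second translate the piece indexed by $r_i = \rv(c_i - c)$ by $c_i - c$. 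Then $T = \can \circ \eta_2 \circ \can \circ \eta_1$ is a special bijection with $T(X)$ an $\RV$-product, but the intermediate image $(\can \circ \eta_1)(X)$ has fibers $\gb_i - c$, which are \emph{proper} subballs of the $\rv$-balls $\rv^{-1}(r_i)$, so it is not an $\RV$-product. Consequently, in your centripetal case the translated set $(\rv^{-1}(u) + \lambda(\lbar s)) \times \set{\lbar s}$ does lie inside a single $\rv$-polyball, but that polyball is not contained in $T'(X)$, and the inductive hypothesis as you formulated it says nothing about how $(T')^{-1}$ acts there; the phenomenon you yourself flag (the image is a proper translated subball) is exactly what prevents the intermediate images from being $\RV$-products, so it cannot simultaneously be absorbed by an induction whose hypothesis presupposes that structure.

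The strategy is repairable, in two ways. One is to strengthen differently: prove, with no $\RV$-product hypothesis at all, that for \emph{every} $\rv$-polyball $\gp$ the restriction of $T^{-1}$ to $\gp \cap T(X)$ is a translation in the $\VF$-coordinate, with constant $\RV$-part, by an amount depending definably only on the $\rv$-data of $\gp$; on such a set the value of the $\VF$-coordinate is constant, so the in-locus/out-of-locus dichotomy at each stage is decided by the $\RV$-data alone and your peeling then goes through without ever asking whether intermediate images are $\RV$-products. The other is the paper's route, which peels from the inside: write $T = T_1 \circ (\can \circ \eta_1)$, observe that $T_1$ is again a special bijection whose final image is the given $\RV$-product $T(X)$, apply the inductive hypothesis to $T_1$ acting on $(\can \circ \eta_1)(X)$ to get $\epsilon_1$, and then set $\epsilon(t, \lbar s) = \epsilon_1(t, \lbar s) + \lambda(\lbar u)$ or $\epsilon_1(t, \lbar s)$ according to whether the $\RV$-part $\lbar u$ of $T_1^{-1}$ of the polyball lies in $\dom(\lambda)$. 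Either fix avoids the false claim; as written, your argument has a genuine gap at precisely this point.
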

\begin{proof}
We do induction on the length $\lh(T)$ of $T$. For the base case $\lh(T)=1$, let $T = \can \circ \eta$, where $\eta$ is a centripetal
transformation. Let $\lambda$ and $C \sub \RVH(A)$ be the corresponding focus map and its locus. For each $(t, \lbar s) \in \prv (A^{\sharp})$, if $\lbar s \in \dom(\lambda)$ then set
$\epsilon(t, \lbar s)= \lambda(\lbar s)$, otherwise set
$\epsilon(t, \lbar s) = 0$. Clearly $\epsilon$ is as required.

We proceed to the inductive step. Let $T = \can \circ \eta_n \circ \cdots \circ \can \circ \eta_1$ and $T_1 = \can \circ \eta_{n} \circ \cdots \circ \can \circ \eta_2$. By the inductive hypothesis, for the special bijection $T_1$, there is a
function
\[
\epsilon_1 : (\prv \circ T_1)((\can \circ \eta_1)(A))
\fun \VF
\]
as required. Let $\lambda$ and $C \sub \RVH(A)$ be the focus map and its locus for the centripetal transformation $\eta_1$. For each $(t, \lbar s) \in \prv(A^{\sharp})$, if
\[
(\prv \circ T_1^{-1})(\rv^{-1}(t) \times \set{(t, \lbar s)}) = (r, \lbar u) \quad \text{and} \quad \lbar u \in \dom(\lambda),
\]
then set $\epsilon(t, \lbar s)= \epsilon_1(t, \lbar s) + \lambda(\lbar u)$, otherwise set
$\epsilon(t, \lbar s) = \epsilon_1(t, \lbar s)$. Then $\epsilon$
is as required.
\end{proof}

Note that, in the above lemma, since $\dom(\epsilon) \sub \RV^l$ for some $l$, by
Lemma~\ref{function:rv:to:vf:finite:image}, $\ran(\epsilon)$ is actually finite.

It is easy to see that for functions between subsets that have only one $\VF$-coordinate, composing with special bijections on the right and inverses of special bijections on the left preserves the open-to-open property.

\begin{lem}\label{bijection:dim:1:decom:RV}
Let $A, B \sub \VF$ and $f : A \fun B$ a definable bijection. Then there is a special bijection $T$ on $A$ such that $T(A)$ is an $\RV$-pullback and, for each $\rv$-polydisc $\gp \sub T(A)$, $f \rest T^{-1}(\gp)$ is $\Gamma$-linear and has the open-to-open property.
\end{lem}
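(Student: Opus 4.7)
The plan is to first invoke Proposition~\ref{open:to:open:parameter} to obtain a finite definable partition $X = X_1 \sqcup \cdots \sqcup X_n$ such that $f \rest X_i$ has the open-to-open property for every $i$. The remaining task is to produce a single special bijection $T$ on $X$ with $T(X)$ an $\RV$-product and each $T(X_i)$ a union of $\rv$-polyballs of $T(X)$. Granted this, every $\rv$-polyball $\gp \sub T(X)$ lies in a unique $T(X_i)$, so $T^{-1}(\gp) \sub X_i$, and $f \rest T^{-1}(\gp)$, being a restriction of a bijection with the open-to-open property, inherits that property directly (a restriction to a subset preserves the open-to-open property since $f$ is a bijection, so preimages of open balls inside the restricted range remain entirely inside the restricted domain).

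For the construction of $T$, I would first apply Proposition~\ref{all:subsets:rvproduct} to $X$ to obtain a special bijection $T^{(0)}$ such that $T^{(0)}(X)$ is an $\RV$-product. It then remains to refine $T^{(0)}$ so that each $T^{(0)}(X_i)$ also becomes a union of $\rv$-polyballs. The local claim driving this refinement, for each $a \in X$ with $a \in X_{i(a)}$, is that there exists an $\seq{a}$-definable refining special bijection $R_a$ on $T^{(0)}(X)$ such that $R_a(T^{(0)}(a))$ lies in an $\rv$-polyball $\gp_a \sub R_a(T^{(0)}(X_{i(a)}))$. To construct such $R_a$, I would work inside the $\rv$-polyball $\gp^{(0)} \sub T^{(0)}(X)$ containing $T^{(0)}(a)$ and apply Proposition~\ref{all:subsets:rvproduct} (or its direct precursor Lemma~\ref{simplex:with:hole:rvproduct}) to the $\seq{a}$-definable subset $\gp^{(0)} \cap T^{(0)}(X_{i(a)})$ of $\gp^{(0)}$; this would yield a localized special bijection placing $T^{(0)}(a)$ inside an $\rv$-polyball entirely within $T^{(0)}(X_{i(a)})$, which after extension by the identity outside $\gp^{(0)}$ provides the desired $R_a$.

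A compactness argument in the spirit of Lemma~\ref{compactness:injectivity} then combines the family $\{R_a : a \in X\}$ into a single definable refinement $R$ such that for every $a \in X$ the $\rv$-polyball of $R(T^{(0)}(X))$ containing $R(T^{(0)}(a))$ pulls back into $T^{(0)}(X_{i(a)})$; setting $T = R \circ T^{(0)}$ then completes the construction. The principal obstacle is the careful management of loci and focus maps during the extension of each localized bijection to all of $T^{(0)}(X)$: centripetal translations performed within $\gp^{(0)}$ must avoid collisions with the untouched complement $T^{(0)}(X) \setminus \gp^{(0)}$. This can be arranged by confining every locus to lie inside $\gp^{(0)}$ and prepending canonical bijections so that the freshly-added $\RV$-coordinates, which record $\rv$ of the current $\VF$-coordinate, distinguish the translated part inside $\gp^{(0)}$ from the untranslated complement, thereby guaranteeing the global injectivity required for $R_a$ to be a special bijection.
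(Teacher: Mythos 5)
Your proposal is correct and follows essentially the same route as the paper: partition $X$ using Proposition~\ref{partition:open:to:open} (your appeal to Proposition~\ref{open:to:open:parameter} just specializes to it here), then use Proposition~\ref{all:subsets:rvproduct} together with Lemma~\ref{simplex:with:hole:rvproduct} to produce a single special bijection whose $\rv$-polyballs refine that partition, so that the open-to-open property passes to each pullback $f \rest T^{-1}(\gp)$ simply by restriction. The only difference is presentational: your pointwise-local refinement inside each $\rv$-polyball followed by a compactness globalization is exactly what the paper compresses into the phrase ``applying Proposition~\ref{all:subsets:rvproduct} to each piece or its subsequent image.''
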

\begin{proof}
By Proposition~\ref{partition:open:to:open}, there is a finite partition of $A$ into definable subsets such that the restriction of $f$ to each
piece has the open-to-open property. After applying
Corollary~\ref{all:subsets:rvproduct} to each piece, we may assume that $A$ is an open ball and $f$ has the open-to-open property. For each $a \in A$ let $\delta_a \in \Gamma \cup \{\infty\}$ be the least such that $f \rest \go(a, \delta_a)$ is $\Gamma$-linear. Let $\psi$ be a quantifier-free formula that defines the function $a \efun \delta_a$. By Theorem~\ref{special:bi:polynomial:constant} and compactness, there is a special bijection $T$ on $A$ such that $T(A)$ is an $\RV$-pullback and each term of the form $\rv(G(X))$ in $\psi$ is constant on every subset of the form $T^{-1}(\gp)$, where $\gp$ is an $\rv$-polydisc contained in $T(A)$. By Corollary~\ref{rv:linear:rad}, each $f \rest T^{-1}(\gp)$ is $\Gamma$-linear. So $T$ is as required.
\end{proof}


\begin{lem}\label{bijection:radii:one:one}
Let $A$, $B \sub \VF$ be open balls and $f : A \fun B$ a
definable bijection that has the open-to-open property. For any $\alpha \in \Gamma(S)$ there is a special bijection $T$ on $A$ such that $T(A)$ is an $\RV$-pullback and, for each $\rv$-polydisc $\gp \sub T(A)$, the set
\[
\set{\rad(\gb) : \gb \text{ is an open ball contained in }
T^{-1}(\gp) \text{ with } \rad(f(\gb)) = \alpha}
\]
is a singleton.
\end{lem}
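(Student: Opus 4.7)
The plan is to associate to each point $a \in X$ a distinguished open ball $\gb_a(\delta(a))$ capturing the ``$\alpha$-behavior'' of $f$ near $a$, and then to use compactness to produce a single special bijection $T$ whose $\rv$-polyballs align with these distinguished balls. To this end, for $a \in X$ and $\gamma \geq \rad(X)$, let $\gb_a(\gamma)$ denote the open ball of radius $\gamma$ containing $a$; by the open-to-open property $f(\gb_a(\gamma))$ is an open ball. If $\gamma < \gamma'$ then $\gb_a(\gamma') \subsetneq \gb_a(\gamma)$, so by the injectivity of $f$ we have $f(\gb_a(\gamma')) \subsetneq f(\gb_a(\gamma))$, giving $\rad(f(\gb_a(\gamma'))) > \rad(f(\gb_a(\gamma)))$. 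Thus $\gamma \mapsto \rad(f(\gb_a(\gamma)))$ is strictly increasing and definable, so its preimage of $\alpha$ has at most one element. I define a definable function $\delta: X \fun \Gamma$ sending $a$ to this unique value when it exists, and otherwise to $\inf\set{\gamma : \rad(f(\gb_a(\gamma))) > \alpha}$, which is definable by $o$-minimality of $\Gamma$.

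The function $\delta$ is locally constant on its distinguished balls: if $a' \in \gb_a(\delta(a))$, then $\gb_{a'}(\delta(a)) = \gb_a(\delta(a))$, so the strict monotonicity forces $\delta(a') = \delta(a)$. Hence $X$ is partitioned into definable open balls $\gb_a(\delta(a))$ on which $\delta$ is constant. The goal is then a special bijection $T$ on $X$ such that each $\rv$-polyball $\gp \sub T(X)$ satisfies $T^{-1}(\gp) = \gb_a(\delta(a))$ for some (hence every) $a \in T^{-1}(\gp)$. Locally, given $a$, a single centripetal transformation whose focus map translates a chosen definable point to the appropriate center realizes the ball $\gb_a(\delta(a))$ as the preimage of an $\rv$-polyball around the image of $a$; compactness in the style of Lemma~\ref{compactness:injectivity}, combined with the $\RV$-product structure from Proposition~\ref{all:subsets:rvproduct}, then glues these local witnesses into a single definable special bijection $T$.

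Verification is then straightforward. Fix $\gp \sub T(X)$ and set $\gb^* = T^{-1}(\gp) = \gb_a(\delta(a))$ for any $a \in \gb^*$, with common value $\delta^*$. Any open ball $\gb \sub \gb^*$ with $\rad(f(\gb)) = \alpha$ contains some $a' \in \gb^*$, so by the uniqueness proved in the first step, $\rad(\gb) = \delta(a') = \delta^*$; hence the set of such radii is a subset of $\set{\delta^*}$, and is a singleton whenever some such $\gb$ exists (in the ``jump'' case where $\delta$ was extended by the infimum, the set is empty and the conclusion holds vacuously). The main obstacle is the gluing step in the second paragraph: because $\delta$ is $\Gamma$-valued while focus maps are $\VF$-valued, one must witness each $\gb_a(\delta(a))$ locally in the form $\rv(x - c_a) \in A_a$ for a definable point $c_a$ and a definable $A_a \sub \RV$, and then appeal to a uniform compactness argument of the kind used throughout the preceding sections to assemble the global $T$.
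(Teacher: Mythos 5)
Your opening step is sound, and it is in fact the same observation that drives the paper's proof: since properly nested open balls have strictly distinct radii and $f$ is injective with the open-to-open property, through each $a \in X$ there is at most one open ball $\gb_a$ with $\rad(f(\gb_a)) = \alpha$, so $a \efun \rad(\gb_a)$ is a definable partial function on $X$. The route you take from there, however, has a genuine gap, and it sits exactly where you extend this to the total function $\delta$. In the fallback (``jump'') case $\delta(a)$ is an infimum over the whole family of balls through $a$, not a property of the single ball $\gb_a(\delta(a))$; a point $a' \in \gb_a(\delta(a))$ may perfectly well lie on a strictly smaller ball whose $f$-image has radius exactly $\alpha$ (nothing forces the image radius to depend only on the source radius, so the balls through $a$ can skip $\alpha$ while balls elsewhere inside $\gb_a(\delta(a))$ hit it). Then $\delta(a') > \delta(a)$, your local-constancy claim and the resulting ``partition'' of $X$ into distinguished balls fail (the balls $\gb_a(\delta(a))$ can be properly nested), and the ``vacuously empty'' step at the end of your verification collapses with them. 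Separately, the gluing step you defer to compactness is the real content of your plan: you need the pulled-back polyballs $T^{-1}(\gp)$ to be \emph{exactly} the distinguished balls, which requires producing, uniformly and $\RV$-definably, translation points for a definable family of open balls; this is precisely the kind of work packaged in Proposition~\ref{special:bi:polynomial:constant} and its corollaries, not a routine compactness argument, and you acknowledge but do not carry it out.

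The paper sidesteps both problems by asking for less. It never tries to make the polyballs coincide with the balls in $\gB$ (the family of open balls with image radius $\alpha$): it takes a quantifier-free formula $\psi(x)$ defining the partial radius function $a \efun \rad(\gb_a)$ and applies Corollary~\ref{poly:constant:finitely:many} to get a special bijection $T$ with $T(X)$ an $\RV$-product on which every term $\rv(g(x))$ of $\psi$ is constant on each $T^{-1}(\gp)$; then the radius function itself is constant on each $T^{-1}(\gp)$, and since any ball $\gb \sub T^{-1}(\gp)$ with $\rad(f(\gb)) = \alpha$ contains points at which that function takes the value $\rad(\gb)$, the set of radii in question is a singleton. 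If you want to salvage your approach, drop the extension of $\delta$ off $\bigcup \gB$ and aim only for constancy of the partial radius function on each pulled-back polyball; at that point your argument reduces to the paper's.
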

\begin{proof}
Let $h$ be the function on $A$ such that, for every $a \in A$, if $\go(f(a), \alpha) \sub B$ then $h(a) = \rad(f^{-1}(\go(f(a), \alpha)))$, otherwise $h(a) = \infty$. Now we may proceed exactly as in Lemma~\ref{bijection:dim:1:decom:RV}.
\end{proof}

\begin{lem}\label{bijection:rv:one:one}
Let $A \sub \VF^2$ be a definable subset such that $\pr_1(A)$ is an
open ball. Let $f : \pr_1(A) \fun \pr_2(A)$ be a definable bijection that has the open-to-open property. Suppose that for each $a \in \pr_1(A)$ there is a $t_a \in \RV$ such that
\[
\fib(A, a) = \rv^{-1}(t_a) + f(a).
\]
Then there is a special bijection $T$ on $\pr_1(A)$ such that $T(\pr_1(A))$ is an $\RV$-pullback and, for each $\rv$-polydisc $\gp \sub T(\pr_1(A))$, the set
\[
\set{\rv(a - f^{-1}(b)) : a \in T^{-1}(\gp) \text{ and } b \in \fib(A, a)}
\]
is a singleton.
\end{lem}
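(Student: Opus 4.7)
The plan is to identify, for each $a \in \pr_1 X$, a single well-defined $\RV$-value $\mu(a)$ such that $\rv(a - f^{-1}(b)) = \mu(a)$ for every $b \in \fib(X,a)$, and then to apply a special bijection that makes the definable function $\mu$ constant on each $\rv$-polyball.

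First I would check that when $t_a \neq \infty$, the fiber $\fib(X,a) = f(a) + \rv^{-1}(t_a)$ is an open ball of radius $\vrv(t_a)$ that does not contain $f(a)$, since $0 \notin \rv^{-1}(t_a)$. Applying the open-to-open property to $f^{-1}$ then shows that the subset $\gb_a := f^{-1}(\fib(X,a))$ is an open ball in $\pr_1 X$, and since $f(a) \notin \fib(X,a)$ we have $a \notin \gb_a$. For any $a_0, a' \in \gb_a$ I would then write $a - a' = (a - a_0) + (a_0 - a')$ and observe that $\vv(a - a_0) \leq \rad(\gb_a) < \vv(a_0 - a')$; the standard non-archimedean dominance identity gives $\rv(a - a') = \rv(a - a_0)$, a quantity depending only on $a$. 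Setting $\mu(a)$ equal to this common value (and $\mu(a) = \infty$ when $t_a = \infty$, in which case $\fib(X,a) = \{f(a)\}$ and $f^{-1}(b) = a$) produces a definable function $\mu : \pr_1 X \fun \RV$ with the desired property that $\rv(a - f^{-1}(b)) = \mu(a)$ for every $(a,b) \in X$.

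The final step is to apply a special bijection that makes $\mu$ constant on each $\rv$-polyball. By quantifier elimination (Theorem~\ref{qe:acvf}), the graph of $\mu$ is cut out by a quantifier-free formula $\phi(x, z)$ whose $\VF$-sort content reduces to finitely many $\VF$-sort atoms $h_j(x) = 0$ and finitely many terms of the form $\rv(g_l(x))$, with $g_l, h_j \in \VF(\seq{\0})[x]$. Applying Corollary~\ref{poly:constant:finitely:many} to the combined list $\{g_l\} \cup \{h_j\}$ and to $\pr_1 X$ yields a special bijection $T$ such that $T(\pr_1 X)$ is an $\RV$-product and all of the functions $\rv(g_l)$, $\rv(h_j)$ are constant on $T^{-1}(\gp)$ for every $\rv$-polyball $\gp \sub T(\pr_1 X)$. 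Each $\VF$-atom $h_j(x) = 0$ is equivalent to $\rv(h_j(x)) = \infty$, which is therefore constant on $T^{-1}(\gp)$, and likewise for every $\rv(g_l(x))$. Hence the truth value of $\phi(x,z)$ (as a formula in $z$) depends only on these now-constant values, so $\mu$ is constant on $T^{-1}(\gp)$, and the set displayed in the conclusion collapses to the singleton $\{\mu(a)\}$.

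The hard part is really the well-definedness of $\mu$: it is crucial that $a$ lies strictly outside the open ball $\gb_a$, so that in the splitting $a - a' = (a - a_0) + (a_0 - a')$ the two summands have strictly different valuations and $\rv$ is controlled by the dominant term. This is where the open-to-open hypothesis enters in an essential way, and it also forces us to treat the degenerate possibility $t_a = \infty$ separately, since in that case $\gb_a$ collapses to the single point $a$.
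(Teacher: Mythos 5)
Your argument is correct and is essentially the paper's own proof: both define the definable map $a \efun \rv(a - f^{-1}(\fib(X,a)))$, justify its well-definedness from the fact that $f^{-1}(\fib(X,a))$ is a ball (or the point $a$ itself when $t_a = \infty$) not containing $a$, and then invoke the quantifier-elimination-plus-Corollary~\ref{poly:constant:finitely:many} mechanism (exactly as in Lemma~\ref{bijection:radii:one:one}) to make that function constant on the $\rv$-polyballs of a suitable special bijection. Your extra care in handling the $\VF$-sort atoms $h_j(x)=0$ as $\rv(h_j(x))=\infty$ is a detail the paper leaves implicit, but it changes nothing of substance.
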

\begin{proof}
For each $a \in \pr_1(A)$, let $\gb_a$ be the minimal closed ball
that contains $\fib(A, a)$. Since $\fib(A, a) - f(a) = \rv^{-1}(t_a)$, we have $f(a) \in \gb_a$ but $f(a) \notin \fib(A, a)$ if $t_a \neq \infty$. Hence $a \notin
f^{-1}(\fib(A, a))$ if $t_a \neq \infty$ and $\set{a} =
f^{-1}(\fib(A, a))$ if $t_a = \infty$. Since $f^{-1}(\fib(A, a))$ is a ball, in either case, the function $\rv(a - X)$ is constant on $f^{-1}(\fib(A, a))$. The function $h : \pr_1 (A) \fun \RV$ given by
\[
a \efun \rv(a - f^{-1}(\fib(A, a)))
\]
is definable. Now we may proceed as in Lemma~\ref{bijection:dim:1:decom:RV}.
\end{proof}

%

\begin{defn}\label{defn:balance}
Let $A \sub \VF^2$ be a definable subset such that $\pr_1(A)$, $\pr_2(A)$ are open balls. Let $f : \pr_1(A) \fun \pr_2(A)$ be an $\Gamma$-linear open-to-open bijection. We say that $f$ is \emph{balanced in $A$} if there
are $t_1, t_2 \in \RV$ such that, for each $a \in \pr_1(A)$,
\[
 \fib(A, a) = \rv^{-1}(t_2) + f(a), \quad
 f^{-1}(\fib(A, a)) = a - \rv^{-1}(t_1).
\]
The elements $t_1$, $t_2$ are called the \emph{paradigms} of $A$.
\end{defn}

Let $f$ be balanced in $A$ with the paradigms $t_1$, $t_2$. If one of the paradigms is $\infty$ then the other one must be $\infty$. In this case $A$ is the (graph of the) function $f$. Let us suppose that $t_1, t_2 \in \RV^{\times}$. Let $a \in \pr_1(A)$, $\gb$ the minimal closed ball containing
$\fib(A, a)$, and $\ga$ the minimal closed ball containing
$f^{-1}(\fib(A, a))$. The following properties are easily
deduced:
\begin{enumerate}
 \item $f(a) \notin \fib(A, a)$ and hence $a \notin f^{-1}(\fib(A, a))$.

 \item $\vrv(t_1) = \rad(\ga) > \rad(\pr_1(A))$ and $\vrv(t_2) = \rad(\gb) > \rad(\pr_2(A))$.

 \item $f(a) \in \gb$ and $a \in \ga$.

 \item Let $\go_{a}$, $\go_{f(a)}$ be the maximal open subballs of $\ga$, $\gb$ that contains respectively $a$, $f(a)$. We have that, for every $a^* \in f^{-1}(\go_{f(a)})$,
     \[
     \fib(A, a^*) = \rv^{-1}(t_2) + f(a^*) = \rv^{-1}(t_2) + f(a) = \fib(A, a)
     \]
     and hence $a^* - \rv^{-1}(t_1) = a - \rv^{-1}(t_1)$; so $a^* \in \go_{a}$. Symmetrically, for every $b^* \in f(\go_a)$,
     \[
     f^{-1}(\fib(A, f^{-1}(b^*))) = f^{-1}(b^*) - \rv^{-1}(t_1) = a - \rv^{-1}(t_1) = f^{-1}(\fib(A, a))
     \]
     and hence $\rv^{-1}(t_2) + b^* = \rv^{-1}(t_2) + f(a)$; so $b^* \in \go_{f(a)}$. Therefore we have $f(\go_{a}) = \go_{f(a)}$.
\end{enumerate}
Let $\gA$, $\gB$ be respectively the sets of open subballs of $\pr_1(A)$, $\pr_2(A)$ of radii $\vrv(t_1)$, $\vrv(t_2)$. Then we also have:
\begin{enumerate}
 \item $f$ induces a bijection $f_{\downarrow} : \gA \fun \gB$ such that for each $\go \in \gA$, each $c \in \go$, and each $d \in \fib(A, c)$
     \[
     \fib(A, c) = \rv^{-1}(t_2) + f_{\downarrow}(\go) \in \gB, \quad
     \fib(A, d) = f_{\downarrow}^{-1}(\fib(A, c)) + \rv^{-1}(t_1) = \go.
     \]
 \item There is an internal symmetry of $A$ as expressed by the following identity:
       \begin{align*}
       \bigcup \{\go \times (\rv^{-1}(t_2) + f_{\downarrow}(\go)) : \go \in \gA \} &= \bigcup \{(f^{-1}_{\downarrow}(\gm) + \rv^{-1}(t_1)) \times \gm : \gm \in \gB \} \\
       & = A \cap (\pr_1(A) \times \pr_2(A)).
       \end{align*}
\end{enumerate}

\begin{rem}\label{rem:rv:linear:bal}
Since $f$ is also $\Gamma$-linear, there is a $\gamma \in \Gamma$ such that $\vv(f(a) - f(a')) = \gamma + \vv(a - a')$ for any $a, a' \in \pr_1(A)$. In particular, this holds for any $a, a' \in \go \in \gA$ and hence $\gamma = \vrv(t_2/t_1)$. More generally, let $T_1$, $T_2$ be special bijections on $\pr_1(A)$, $\pr_2(A)$ such that $T_1(\pr_1(A))$, $T_2(\pr_2(A))$ are $\RV$-pullbacks. Let $\rv^{-1}(r_1) \times \{\lbar s_1\} \sub T_1(\pr_1(A))$ and suppose that
\[
(T_2 \circ f \circ T^{-1}_1)(\rv^{-1}(r_1) \times \{\lbar s_1\}) = \rv^{-1}(r_2) \times \{\lbar s_2\} \sub T_2(\pr_2(A)).
\]
Then $\vrv(r_2 / r_1) = \gamma = \vrv(t_2/t_1)$. This consequence of the $\Gamma$-linearity of $f$ is what will be used below in the last section.
\end{rem}

\begin{defn}\label{def:units}
We say that a subset $A$ is a \emph{$1$-cell} if it is either an
open ball contained in one $\rv$-ball or a point in $\VF$. We say
that $A$ is a \emph{$2$-cell} if
\begin{enumerate}
 \item $A \sub \VF^2$ is contained in one $\rv$-polydisc and
  $\pr_1(A)$ is a $1$-cell,
 \item there is a function $\epsilon : \pr_1 (A) \fun \VF$ and a $t \in
   \RV$ such that, for each $a \in \pr_1(A)$, $\fib(A, a) =
   \rv^{-1}(t) + \epsilon(a)$,
 \item one of the following three possibilities occurs:
  \begin{enumerate}
   \item $\epsilon$ is constant,
   \item $\epsilon$ is injective, has the open-to-open property, and
    $\rad(\epsilon(\pr_1(A))) \geq \vrv(t)$,
   \item $\epsilon$ is balanced in $A$.
  \end{enumerate}
\end{enumerate}
The function $\epsilon$ is called the \emph{positioning function}
of $A$ and the element $t$ is called the \emph{paradigm} of $A$.
\end{defn}

A subset $A \sub \VF \times \RV^m$ is a \emph{$1$-cell} if for each $\lbar t \in \prv(A)$ the fiber $\fib(A, \lbar t)$ is a $1$-cell in the sense of Definition~\ref{def:units}. The parameterized version of the notion of a $2$-cell is formulated in the same way. A cell is definable if all the relevant ingredients are definable. Naturally we shall only be concerned with definable cells.

Suppose that $A$ is a $2$-cell. Clearly if its paradigm $t$ is $\infty$ then $A$ and its positioning function $\epsilon$ are identical. It is also easy to see that, if $t \neq \infty$ and $\epsilon$ is not balanced, then $A$ is actually an open
polydisc.

Notice that Corollary~\ref{all:subsets:rvproduct} implies that
for every definable subset $A \sub \VF \times \RV^m$ there is a
definable function $P: A \fun \RV^l$ such that, for each $\lbar s
\in \ran(P)$, the fiber $P^{-1}(\lbar s)$ is a $1$-cell. The same
holds for $2$-cell:

\begin{lem}\label{decom:into:2:units}
For every definable subset $A \sub \VF^2$ there is a definable
function $P: A \fun \RV^m$ such that, for each $\lbar s \in \ran(P)$, the fiber $P^{-1}(\lbar s)$ is a $2$-cell.
\end{lem}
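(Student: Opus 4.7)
The plan is to iterate Lemma~\ref{simplex:with:hole:rvproduct}, first on $\pr_1 X$ and then fiberwise on each $\fib(X, a)$, and then analyze the resulting positioning function via Corollary~\ref{b:minimal:con:13}, Proposition~\ref{open:to:open:parameter}, and Lemma~\ref{bijection:rv:one:one}. By compactness it suffices to show that each point of $X$ has a definable neighborhood that is a $2$-cell; partitioning at the outset by the $\rv$-polyball containing each point also secures clause~(1) of Definition~\ref{def:units}.

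First I would apply Lemma~\ref{simplex:with:hole:rvproduct} to $\pr_1 X$ to reduce, after compactness, to the case that $\pr_1 X$ is itself a $1$-cell. Applying Lemma~\ref{simplex:with:hole:rvproduct} to each fiber $\fib(X, a)$ and uniformizing by compactness then yields a paradigm $t \in \RV$ and a definable positioning function $\epsilon : \pr_1 X \fun \VF$ with $\fib(X, a) = \rv^{-1}(t) + \epsilon(a)$ for every $a \in \pr_1 X$, securing clause~(2). Corollary~\ref{b:minimal:con:13} applied to $\epsilon$ now splits into the cases where $\epsilon$ is constant (so case~(a) of Definition~\ref{def:units} holds) and where $\epsilon$ is injective. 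In the injective case, Proposition~\ref{open:to:open:parameter} refines so that $\epsilon$ has the open-to-open property, and a further refinement using Lemma~\ref{complete:type:open:ball} makes $\epsilon(\pr_1 X)$ an open ball, with $\rad(\epsilon(\pr_1 X))$ made constant by Lemma~\ref{bijection:radii:one:one}. If $\rad(\epsilon(\pr_1 X)) \geq \vrv(t)$, case~(b) holds.

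The main obstacle is the remaining case $\rad(\epsilon(\pr_1 X)) < \vrv(t)$, in which we must realize the trapezoidal structure of case~(c) by producing a uniform $t_1 \in \RV$ with $\epsilon^{-1}(\fib(X, a)) = a - \rv^{-1}(t_1)$ for every $a \in \pr_1 X$. For this I would apply Lemma~\ref{bijection:rv:one:one} to the bijection $\epsilon$, viewing $X$ as a subset of $\pr_1 X \times \VF$ with the required fibered form: the lemma produces a special bijection making $\rv(a - \epsilon^{-1}(b))$ constant on each $\rv$-polyball. Proposition~\ref{special:bi:polynomial:constant} applied to the polynomials defining $\epsilon$ and $\epsilon^{-1}$ then merges these values across the piece into a single $t_1$ after one more compactness step. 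All the special bijections used along the way are recorded as additional $\RV$-valued coordinates, assembling into the desired definable function $P : X \fun \RV^m$ whose fibers are $2$-cells.
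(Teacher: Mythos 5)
Your proposal follows the same route as the paper for most of its length: fix one $\rv$-polyball, contract each fiber $\fib(X,a)$ by an $a$-definable special bijection and uniformize by compactness to get the fibered form $\fib(X,a)=\rv^{-1}(t)+\epsilon(a)$ piecewise, split according to whether $\epsilon$ is constant or injective, and in the injective case reduce via the open-to-open property and the comparison of $\rad(\epsilon(\pr_1 X))$ with $\vrv(t)$. Two small remarks on that portion: the translate form of the fibers is not delivered by Lemma~\ref{simplex:with:hole:rvproduct} plus compactness alone --- you need Lemma~\ref{inverse:special:dim:1}, which says that the inverse of a special bijection acts on each $\rv$-polyball as a translation, to obtain the positioning function $\epsilon$ at all; and your opening pass making $\pr_1 X$ a $1$-cell does not survive the subsequent fiberwise refinement (the set of $a$ carrying a given $\RV$-index is only a definable subset of $\pr_1 X$), though this is harmless because Corollary~\ref{b:minimal:con:13} re-establishes the $1$-cell condition later, so the first step simply does no work.

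The genuine gap is in the trapezoidal case. Trapezoidality requires the equality $\epsilon^{-1}(\fib(X,a)) = a - \rv^{-1}(t_1)$, whereas Lemma~\ref{bijection:rv:one:one} only yields that $\rv(a - \epsilon^{-1}(b))$ is constant on each piece, i.e.\ the inclusion $\epsilon^{-1}(\fib(X,a)) \sub a - \rv^{-1}(t_1)$. The paper bridges this by first applying Lemma~\ref{bijection:radii:one:one} with $\alpha = \vrv(t)$, which uniformizes the radii of the preimage balls $\epsilon^{-1}(\gb)$ of the maximal open subballs $\gb \sub \ran \epsilon$ of radius $\vrv(t)$; that --- not the radius of $\epsilon(\pr_1 X)$, which is a single ball whose radius is already a fixed definable value --- is what that lemma is for, and it is the ingredient that upgrades the constancy of $\rv(a-\epsilon^{-1}(b))$ to the exact translate equality. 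Your closing appeal to Proposition~\ref{special:bi:polynomial:constant} cannot substitute for this: that proposition takes a finite list of $\VF$-sort polynomials, while $\epsilon$ is an arbitrary definable function; and in any case nothing needs to be ``merged across the piece into a single $t_1$'' --- the statement being proved allows further $\RV$-indexed partitioning, and the paradigms may vary from piece to piece. So, as written, the equality clause of the trapezoidal condition in Definition~\ref{def:units} is never established, and the tools you cite do not close it.
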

\begin{proof}
By compactness, without loss of generality, we may assume that $A$ is contained in
one $\rv$-polydisc. For any $a \in \pr_1 (A)$, by Corollary~\ref{all:subsets:rvproduct}, there is an $a$-definable special bijection $T_a$ on $\fib(A, a)$ such that $T_a(\fib(A, a))$ is an $\RV$-pullback. By Lemma~\ref{inverse:special:dim:1}, there is an $a$-definable function $\epsilon_a : \prv (T_a(\fib(A, a))) \fun \VF$ such that, for every $(t, \lbar s)
\in \prv (T_a(\fib(A, a)))$, we have
\[
T_a^{-1}(\rv^{-1}(t) \times \set{(t, \lbar s)}) =
\rv^{-1}(t) + \epsilon_a(t, \lbar s).
\]
By compactness, we may glue these functions together, that is, there is a definable subset $A^{\sharp} \sub \pr_1(A) \times \RV^l$ and a definable function $\epsilon : A^{\sharp} \fun \VF$ such that, for each $a \in \pr_1(A)$,
\[
\fib(A^{\sharp}, a) = \prv (T_a(\fib(A, a))), \quad
\epsilon \rest \fib(A^{\sharp}, a) = \epsilon_a.
\]
Since, for each $(t, \lbar s) \in \prv(A^{\sharp})$, $\epsilon \rest \fib(A^{\sharp}, (t, \lbar s))$ is a $(t, \lbar s)$-definable function from $\VF$ into $\VF$, by Lemma~\ref{function:dim1:decom:RV} and compactness, we are reduced to the case
that each $\epsilon \rest \fib(A^{\sharp}, (t, \lbar s))$ is either
constant or injective. If no $\epsilon \rest \fib(A^{\sharp}, (t, \lbar
s))$ is injective then we can finish by applying
Corollary~\ref{all:subsets:rvproduct} to each $\fib(A^{\sharp}, (t,
\lbar s))$ and then compactness.

Suppose that $\epsilon_{(t, \lbar s)} = \epsilon \rest \fib(A^{\sharp},
(t, \lbar s))$ is injective. By Lemma~\ref{bijection:dim:1:decom:RV}, we are reduced to the case that $\fib(A^{\sharp}, (t, \lbar s))$ is an open ball and
$\epsilon_{(t, \lbar s)}$ is $\Gamma$-linear and has the open-to-open property. Note that if $\rad(\ran (\epsilon_{(t, \lbar s)})) \geq \vrv(t)$ then $\epsilon_{(t, \lbar s)}$ satisfies the condition (3b) in Definition~\ref{def:units}. So let us suppose $\rad(\ran (\epsilon_{(t, \lbar s)})) < \vrv(t)$. We have
\[
\ran (\epsilon_{(t, \lbar s)}) = \bigcup_{a \in \fib(A^{\sharp},
(t, \lbar s))} (\rv^{-1}(t) + \epsilon_{(t, \lbar s)}(a)).
\]
By Lemma~\ref{bijection:rv:one:one}, we are further reduced to the case that there is an $r \in \RV$ such that, for every $a \in \fib(A^{\sharp}, (t, \lbar s))$,
\[
\rv(a - \epsilon_{(t, \lbar s)}^{-1}(\rv^{-1}(t) + \epsilon_{(t, \lbar s)}(a))) = r
\]
and hence
\[
\epsilon_{(t, \lbar s)}^{-1}(\rv^{-1}(t) + \epsilon_{(t, \lbar s)}(a)) = a -
\rv^{-1}(r).
\]
So, in this case, $\epsilon_{(t, \lbar s)}$ is balanced. Now we are
done by compactness.
\end{proof}

\section{Contracting to $\RV$}\label{section:contracting}

One basic result of this section is Lemma~\ref{simul:special:dim:1}. We remark here again that if we could extend this lemma to higher dimensions in a more ``natural'' way then the length of this paper would be cut by half.

\begin{lem}\label{bijection:made:contractible}
Let $A \sub \VF^{n_1} \times \RV^{m_1}$ and $B \sub \VF^{n_2} \times \RV^{m_2}$ be definable subsets and $f : A \fun B$ a definable function. Then there exist special bijections $T_A$, $T_B$ on $A$, $B$ such that $T_A(A)$, $T_B(B)$ are $\RV$-pullbacks and the function $T_B \circ f \circ T_A^{-1}$ is contractible.
\end{lem}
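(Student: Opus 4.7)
The plan is first to reduce to the case where both $X$ and $Y$ are already $\RV$-products, and then to force the ``$\RV$-shadow'' of $f$ to be locally constant by applying Corollary~\ref{poly:constant:finitely:many} to a finite list of polynomials pulled out of a quantifier-free definition of $f$.

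First I would invoke Proposition~\ref{all:subsets:rvproduct} to obtain definable special bijections $T_X^0$ on $X$ and $T_Y^0$ on $Y$ such that $X^{\ast} := T_X^0(X)$ and $Y^{\ast} := T_Y^0(Y)$ are $\RV$-products. Setting $g := T_Y^0 \circ f \circ (T_X^0)^{-1}$, it then suffices to find a further special bijection $T_X^1$ on $X^{\ast}$ making $g \circ (T_X^1)^{-1}$ contractible, because compositions of special bijections are special bijections and one can take $T_Y := T_Y^0$. Since $Y^{\ast}$ is an $\RV$-product, its $\rv$-polyballs are precisely the fibers $\rv^{-1}(\lbar v) \times \set{\lbar v}$ with $\lbar v \in \prv Y^{\ast}$, so contractibility of $g \circ (T_X^1)^{-1}$ is equivalent to constancy on every $\rv$-polyball in $T_X^1(X^{\ast})$ of the $\RV$-shadow
\[
h := (\rv \circ \pvf \circ g,\; \prv \circ g) : X^{\ast} \fun \RV^{n_2 + m_2}.
\]

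Next I would use Theorem~\ref{qe:acvf} to pick a quantifier-free $\lan{RV}$-formula $\psi$ defining the graph of $h$ inside $X^{\ast} \times \RV^{n_2 + m_2}$. The crucial observation is that since the output tuple of $h$ is purely $\RV$-sort, every $\VF$-sort subterm of $\psi$ is a polynomial $p(\lbar z) \in \VF(\seq{\0})[\lbar z]$ in the $\VF$-coordinates $\lbar z$ of $X^{\ast}$, occurring either inside an $\rv(\cdot)$ or as one side of a $\VF$-sort (dis)equality; in the latter case I simply add $p$ to the list, since making $\rv(p)$ constant on an $\rv$-polyball already decides whether $p$ vanishes there. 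Collecting the resulting finite list $p_1, \ldots, p_N$, I would apply Corollary~\ref{poly:constant:finitely:many} to $X^{\ast}$ with these polynomials to obtain a definable special bijection $T_X^1$ on $X^{\ast}$ for which $\rv(p_l(\lbar z))$ is constant on $(T_X^1)^{-1}(\gp)$ for every $\rv$-polyball $\gp \sub T_X^1(X^{\ast})$ and every $l$. Because special bijections do not alter the pre-existing $\RV$-coordinates of $X^{\ast}$, the original $\RV$-coordinate $\lbar u$ is also constant along $(T_X^1)^{-1}(\gp)$, hence every literal of $\psi$ has a fixed truth value there once the output $\lbar w$ is fixed. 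Since $h$ is single-valued, this pins $h \circ (T_X^1)^{-1}$ to a unique point on $\gp$, giving the desired contractibility.

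The hard part is the syntactic bookkeeping in the middle paragraph: extracting the clean list of polynomials $p_l$ from $\psi$, verifying that $\VF$-sort (dis)equalities really do get tamed once the corresponding $\rv(p_l)$ are made constant, and checking that the action of $(T_X^1)^{-1}$ on an $\rv$-polyball does not reintroduce variation in the $\RV$-coordinates relevant to $\psi$. Once this is in place, Corollary~\ref{poly:constant:finitely:many} serves as a black box that finishes the job, and the argument owes its brevity to the direct syntactic access to $\VF$-sort polynomials afforded by quantifier elimination --- precisely the advantage highlighted in the introduction for the restricted subclass of theories considered here, which makes the present lemma a good deal lighter than its analogue \cite[Lemma~7.8]{hrushovski:kazhdan:integration:vf}.
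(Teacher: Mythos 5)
Your argument is correct and is essentially the paper's own proof: both reduce $Y$ to an $\RV$-product via Proposition~\ref{all:subsets:rvproduct}, take quantifier-free formulas defining the $\RV$-valued data of $T_Y \circ f$, extract the finite list of $\VF$-sort polynomials occurring in them, and invoke Corollary~\ref{poly:constant:finitely:many} (together with Proposition~\ref{all:subsets:rvproduct}) to make the corresponding $\rv$-terms constant on every $\rv$-polyball, which forces contractibility. Your explicit inclusion of $\rv \circ \pvf \circ g$ in the shadow and of the polynomials coming from $\VF$-sort (dis)equalities (after normalizing them to the form $p(\lbar z)=0$) is just the spelled-out version of what the paper handles tacitly through Convention~\ref{convention:can:map} and its terser syntactic bookkeeping.
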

\begin{proof}
Note that by our convention the canonical bijection has been applied to all definable subsets. By Corollary~\ref{all:subsets:rvproduct}, there is a special bijection $T_B$ on $B$ such that $T_B(B)$ is an $\RV$-pullback. So we may assume that $B$ is an $\RV$-pullback. Let $\psi_1, \ldots,
\psi_{m_2}$ be a sequence of quantifier-free formulas that respectively define the functions $f_i = \pr_i \circ \prv \circ f$ for $1 \leq i \leq m_2$. Let $G_i(\lbar X)$ enumerate all the occurring polynomials in $\psi_1,
\ldots, \psi_{m_2}$. By Theorem~\ref{special:bi:polynomial:constant}, there is a special bijection $T_A$ on $A$ such that $T_A(A)$ is an
$\RV$-pullback and each function $\rv \circ G_i \circ T_A^{-1}$ is
constant on every $\rv$-polydisc $\gp \sub T_A(A)$. So on such an $\rv$-polydisc every $f_i \circ T_A^{-1}$ is constant.
\end{proof}

\begin{lem}\label{simul:special:dim:1}
Let $A \sub \VF \times \RV^{m_1}$ and $B \sub \VF \times \RV^{m_2}$ be definable subsets and $f : A \fun B$ a definable bijection. Then there exist special bijections $T_A : A \fun A^{\sharp}$ and $T_B : B \fun B^{\sharp}$ such that $A^{\sharp}$, $B^{\sharp}$ are $\RV$-pullbacks and, in
the commutative diagram
\[
\bfig
  \square(0,0)/->`->`->`->/<600,400>[A`A^{\sharp}`B`B^{\sharp};
  T_A`f``T_B]
 \square(600,0)/->`->`->`->/<600,400>[A^{\sharp}`\rv(A^{\sharp})`B^{\sharp} `\rv(B^{\sharp});  \rv`f^{\sharp}`f^{\sharp}_{\downarrow}`\rv]
 \efig
\]
$f^{\sharp}_{\downarrow}$ is bijective and hence $f^{\sharp}$ is a lift of it.
\end{lem}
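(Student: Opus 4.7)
The strategy is to combine the contractibility tool Lemma~\ref{bijection:made:contractible} with iterated centripetal refinement, using the uniformization lemmas of this section to make the procedure uniformly definable.

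First, apply Lemma~\ref{bijection:made:contractible} to $f$ to obtain special bijections $T^0_X$, $T^0_Y$ such that $f_0 = T^0_Y \circ f \circ (T^0_X)^{-1} : X_0 \fun Y_0$ is contractible, producing a definable induced map $f_{0\downarrow}$ at the $\RV$-level. Then apply Proposition~\ref{open:to:open:parameter} to partition $X_0$ into finitely many definable pieces on which $f_0$ has the open-to-open property; since special bijections may be applied piecewise and reassembled by compactness, we may assume without loss of generality that $f_0$ is contractible and has the open-to-open property on all of $X_0$. Bijectivity of $f_0$ together with contractibility forces $f_{0\downarrow}$ to be surjective, so only injectivity remains.

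For injectivity, consider any $\rv$-polyball $\gq \sub Y_0$ whose preimage is a disjoint union of more than one $\rv$-polyball $\gp_1, \gp_2, \ldots$ of $X_0$. By the open-to-open property and bijectivity of $f_0$, each $f_0(\gp_i)$ is an open subball of $\gq$ and these subballs partition $\gq$. Use Lemma~\ref{bijection:radii:one:one} to uniformize the radii of the $f_0(\gp_i)$ parametrically, and Lemma~\ref{rv:effectiveness} (or its analogues Lemma~\ref{effectiveness} and Lemma~\ref{algebraic:balls:definable:centers}) to secure a definable center $c \in f_0(\gp_1)$. Perform a centripetal on $Y_0$ with focus $c$ and a corresponding one on $X_0$ with focus $f_0^{-1}(c) \in \gp_1$, followed by the canonical bijection on both sides. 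Since $c \notin f_0(\gp_i)$ for $i \neq 1$, each translated ball $f_0(\gp_i) - c$ lies inside a single $\rv$-ball and becomes a single new $\rv$-polyball, matched bijectively with the correspondingly shifted $\gp_i$; only $\gp_1$ and $f_0(\gp_1)$ get fragmented, and the restriction of $f_0$ to this fragment pair is a new $\VF$-dimension $1$ bijection between smaller open balls to which the same refinement applies recursively.

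Iterating this refinement and packaging the finitely many parametric cases by compactness produces $T_X$ on $X$ and $T_Y$ on $Y$ such that $f^{\sharp} = T_Y \circ f \circ T_X^{-1}$ is contractible with $f^{\sharp}_{\downarrow}$ bijective, making $f^{\sharp}$ a lift. The main obstacle is the termination and uniform definability of this recursion: one must argue that the recursive depth is controlled by a first-order definable quantity so that compactness yields a single finite special bijection rather than an infinite tower. The $\VF$-dimension $1$ hypothesis is crucial, since the preimage structure at every level is a one-parameter family of open subballs of a single $\rv$-ball, allowing a single centripetal focus to resolve all but one component per step, and Lemmas~\ref{bijection:radii:one:one} and~\ref{bijection:rv:one:one} reduce the residual trapezoidal data to a form amenable to explicit definable control.
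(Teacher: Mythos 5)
There is a genuine gap, and it sits exactly where you flag it. Your reduction to the case where $f_0$ is contractible with the open-to-open property is fine and matches the paper, but the injectivity step is not carried out. First, the pivot of your recursion is ``secure a definable center $c \in f_0(\gp_1)$'': Lemma~\ref{rv:effectiveness}, Lemma~\ref{effectiveness}, and Lemma~\ref{algebraic:balls:definable:centers} produce definable points of $\rv$-balls or of (finitely many) closed balls, not a point inside a prescribed \emph{open} subball such as $f_0(\gp_1)$; an open ball that is atomic over the relevant parameters (Section~\ref{section:atoms}) has no definable proper subset at all, so this step can simply fail. Second, even granting such a $c$, your claim that each translated ball $f_0(\gp_i) - c$, $i \neq 1$, ``becomes a single new $\rv$-polyball'' is false in general: it is an open ball \emph{properly} contained in an $\rv$-ball, so after the centripetal transformation and $\can$ the inverse map is still not contractible on those pieces, and resolving them requires yet more centripetal transformations on the $Y$-side with foci inside specific open balls --- the same problem again, now proliferating. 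Third, the termination and uniform definability of the resulting recursion, which you yourself identify as ``the main obstacle,'' is never addressed; naming the obstacle is not a proof, and there is no obvious first-order bound on the recursion depth in your scheme.

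The paper's proof avoids all of this by never generating new foci on the $Y$-side at all. After the same reduction (Proposition~\ref{open:to:open:parameter} plus Lemma~\ref{bijection:made:contractible}), it applies Lemma~\ref{bijection:made:contractible} once more \emph{in the inverse direction} to produce, in one shot, a special bijection $T_Y = \can \circ \eta_n \circ \cdots \circ \can \circ \eta_1$ with $(T_Y \circ f)^{-1}$ contractible. The special bijection $T_X$ is then built by induction on the \emph{fixed, finite} length $n$: at stage $k+1$, the contractibility and open-to-open property of the current composite guarantee that each $\rv$-polyball of the $Y$-side image is a disjoint union of open-ball images of $X$-side polyballs, so the focus point $(\lambda(\lbar t), \lbar t)$ of $\eta_{k+1}$ lies in the image $\go_{\lbar t}$ of a unique $X$-side polyball $\gq_{\lbar t}$; the mirrored focus map is simply the preimage $a_{\lbar t}$ of $\lambda(\lbar t)$ under the composite, which is automatically definable --- no appeal to definable centers of open balls is needed, and no unbounded recursion arises because the number of mirroring steps is the length of $T_Y$. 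Maintaining the two contractibility invariants through the $n$ stages then yields that $T_Y \circ f \circ T_X^{-1}$ is contractible in both directions, i.e.\ $f^{\sharp}_{\downarrow}$ is bijective. If you want to salvage your approach, the essential missing idea is precisely this transport of an already-constructed $Y$-side special bijection through $f$, rather than a two-sided ball-splitting recursion.
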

\begin{proof}
By Proposition~\ref{open:to:open:parameter}, there is a finite partition of $A$ into definable subsets $A_1, \ldots, A_n$ such that each $f \rest A_i$ has the open-to-open property. Therefore, applying
Lemma~\ref{bijection:made:contractible} to each $f\rest A_i$, we may assume that $A$, $B$ are $\RV$-pullbacks and $f$ is contractible and has the open-to-open property. In particular, for each $\rv$-polydisc $\gp \sub A$, $f(\gp)$ is an
open polydisc contained in an $\rv$-polydisc. By Lemma~\ref{bijection:made:contractible} again, there is a
special bijection $T_B: B \fun B^{\sharp}$ such that $(T_B \circ f)^{-1}$ is contractible. Let $T_B = \can \circ \eta_n \circ \ldots \circ \can \circ \eta_1$, where each $\eta_i$ is a centripetal transformation and $\can$ is the canonical bijection.

Now, by induction, we shall construct a special bijection $T_A = \can \circ \eta_n^* \circ \ldots \circ \can \circ \eta_1^*$ on $A$ such that, for each $i$, both $L_i \circ f \circ (L_i^*)^{-1}$ and $ L_i^* \circ (T_B \circ f)^{-1}$ are contractible, where
\[
L_i = \can \circ \eta_i \circ \ldots \circ \can \circ \eta_1,\quad
L_i^* = \can \circ \eta_i^* \circ \ldots \circ \can \circ \eta_1^*.
\]
Then $T_A$, $T_B$ will be as desired. To that end, suppose that $\eta^*_i$ has been constructed for each $i \leq k < n$. Let $D^*_k = L_k^*(A)$ and $D_k = L_k(B)$. Let $C \sub D_k$ be the locus of $\eta_{k+1}$ and $\lambda$ the corresponding focus map. Since $L_k \circ f \circ (L_k^*)^{-1}$ is contractible and has the open-to-open property, each $\rv$-polydisc $\gp \sub D_k$ is the union of disjoint subsets of the form $(L_k \circ f \circ (L_k^*)^{-1})(\gq)$, where $\gq \sub D^*_k$ is an $\rv$-polydisc. For each $\lbar t = (t_1, \lbar t_1) \in \dom(\lambda)$, let
\[
O_{\lbar t} = \set{\gq \sub D^*_k : \gq \text{ is an $\rv$-polydisc and }
(L_k \circ f \circ (L_k^*)^{-1})(\gq) \sub \rv^{-1}(t_1) \times
\{\lbar t \}}.
\]
Then there is a unique open sub-polydisc $\go_{\lbar t} \sub \rv^{-1}(t_1) \times
\{\lbar t \} \sub C$ such that $(\lambda(\lbar t), \lbar t) \in \go_{\lbar t}$ and there is a unique $\gq_{\lbar t} \in O_{\lbar t}$ with $(L_k \circ f \circ (L_k^*)^{-1})(\gq_{\lbar t}) = \go_{\lbar t}$. Let
\[
C^* = \bigcup_{\lbar t \in \dom(\lambda)} \gq_{\lbar t} \sub D^*_k, \quad
a_{\lbar t} = (L_k \circ f \circ (L_k^*)^{-1})^{-1}(\lambda(\lbar t),
\lbar t) \in \gq_{\lbar t}.
\]
Let $\lambda^* : \pr_{>1} (C^*) \fun \VF$ be the corresponding focus
map given by $\pr_{>1} (\gq_{\lbar t}) \efun a_{\lbar t}$. Note that both $C^*$ and $\lambda^*$ are definable. Let $\eta^*_{k+1}$ be the centripetal transformation determined by $C^*$ and $\lambda^*$. For each $\lbar t \in \dom(\lambda)$, the restriction of $L_{k+1} \circ f \circ (L_{k+1}^*)^{-1}$ to
$\can(\gq_{\lbar t} - a_{\lbar t})$ is a bijection between the
$\RV$-pullbacks $\can(\gq_{\lbar t} - a_{\lbar t})$ and
$\can(\go_{\lbar t} - \lambda(\lbar t))$ that is contractible in
both ways. For any $\gq \in O_{\lbar t}$ with $\gq \neq \gq_{\lbar t}$,
\[
(L_{k+1} \circ f \circ (L_{k+1}^*)^{-1})(\can(\gq))
\]
is an open polydisc contained in an $\rv$-polydisc. Therefore $L_{k+1} \circ f \circ (L_{k+1}^*)^{-1}$ is contractible.

On the other hand, it is easy to see that, for any nondegenerate $\rv$-polydisc $\gp \sub B^{\sharp}$, $L_{k}^* \circ (T_B \circ f)^{-1}(\gp)$ does not contain any $a_{\lbar t}$ and hence, by the construction of $L_k^*$, $L_{k+1}^* \circ (T_B \circ f)^{-1}$ is contractible.
\end{proof}

\begin{rem}\label{special:dim:1:RV:iso}
In the above lemma, $(\rv(A^{\sharp}), \pr_1)$, $(\rv(B^{\sharp}), \pr_1)$ are actually $\RV[1,\cdot]$-isomorphic. This is immediate by Remark~\ref{RV:isomorphism:weaker:condition}. In fact, for any $(a, \lbar t) \in A^{\sharp}$, if $f^{\sharp}(a, \lbar t) = (b, \lbar s)$ then, by Lemma~\ref{dcl:to:ac}, $b$ is $a$-algebraic and hence, by Lemma~\ref{acl:VF:transfer:acl:RV}, $\rv(b)$ is $\rv(a)$-algebraic; similarly for the other direction.
\end{rem}


\begin{defn}
Let $A \sub \VF^{n} \times \RV^{m_1}$ and $B \sub \VF^{n} \times \RV^{m_2}$ and $f : A \fun B$ a bijection. We say that $f$ is \emph{relatively unary} if there is an $i \in I_n$ such that $(\pr_{\widetilde{i}} \circ f)(\lbar x) = \pr_{\widetilde{i}}(\lbar x)$ for all $\lbar x \in A$. In this case we say that $f$ is \emph{unary relative to the coordinate $i$}. If $f \rest \fib(A, \lbar a)$ is also a special bijection for every $\lbar a \in \pr_{\widetilde{i}} (A)$ then we say that $f$ is \emph{special relative to the coordinate $i$}.
\end{defn}

Obviously the inverse of a relatively unary bijection is a relatively unary bijection.

Let $A \sub \VF^n \times \RV^m$ be a definable subset, $C \sub \RVH(A)$ an $\RV$-pullback, $\lambda$ a focus map with respect to $C$ (and the coordinate $1$), and $\eta$ the centripetal transformation with respect to $\lambda$. Clearly $\eta$ is unary relative to the coordinate $1$. It follows that every special bijection $T$ on $A$ is a composition of relatively special bijections. Choose an $i \in I_n$. By Corollary~\ref{all:subsets:rvproduct} and compactness, there is a bijection $T_i$ on $A$, special relative to the coordinate $i$, such that $T_i(\fib(A, \lbar a))$ is an $\RV$-pullback for every $\lbar a \in \pr_{\widetilde i}(A)$. Let
\[
A_i = \bigcup_{\lbar a \in \pr_{\widetilde i}(A)} \set{\lbar a} \times (\prv \circ T_i)(\fib(A, \lbar a)) \sub \VF^{n-1} \times \RV^{m_i}.
\]
We write $\wh T_i : A \fun A_i$ for the function naturally induced by $T_i$. For any $j \in I_{n-1}$, we may repeat the above procedure on $A_i$ with respect to the coordinate $j$ and obtain a subset $A_{j} \sub \VF^{n-2} \times \RV^{m_j}$ and a function $\wh T_{j} : A_i \fun A_{j}$. Continuing this procedure, we see that, for any permutation $\sigma$ of $I_n$, we can construct a (not necessarily unique) sequence of relatively special bijections $T_{\sigma(1)}, \ldots, T_{\sigma(n)}$ and a corresponding function $\wh T_{\sigma} : A \fun \RV^{l}$. We also have the natural bijection determined by $\wh T_{\sigma}$:
\[
T_{\sigma} : A \fun \mathbb{L}(\wh T_{\sigma}(A), \pr_{\leq n}),
\]
where, without loss of generality, it is (always) assumed that the relevant coordinates in $\wh T_{\sigma}(A)$ are the first $n$ ones. Note that $T_{\sigma}$ is a special bijection and may be thought of as the ``composition'' $T_{\sigma(n)} \circ \ldots \circ T_{\sigma(1)}$.


\begin{defn}\label{defn:standard:contraction}
The function $\wh T_{\sigma}$ (or the image $\wh T_{\sigma}(A)$) is called a \emph{standard contraction} of $A$.
\end{defn}

Let $\wh T_{\id}$ be a standard contraction of $A$ such that $T_{\id}(A)$ ($= (T_n \circ \ldots \circ T_1)(A)$) is of the form $\rv^{-1}(t_i) \times \{(\lbar 0, t_i, \lbar \infty, \lbar s) \}$, where $\lbar 0$ is a tuple of $0$ of length $n-1$ and $\lbar \infty$ is the corresponding tuple of $\infty$ of length $n-1$. Let $T_{\leq i} = T_i \circ \ldots \circ T_1$. It is not hard to see that $T_{\leq i}(A)$ is of the form $\rv^{-1}(t_i) \times \{(\lbar 0, \lbar a, t_i, \lbar \infty, \lbar s) \}$, where $\lbar 0$ is a tuple of $0$ of length $i-1$, $\lbar a \in \VF$ is
a tuple of length $n - i$, and $\lbar \infty$ is a tuple of
$\infty$ of length $i-1$. So for any distinct $a, b \in \rv^{-1}(t_i)$ we have
\[
(\pvf_i \circ T_{\id}^{-1})(a, \lbar 0, t_i, \lbar \infty,
\lbar s) \neq (\pvf_i \circ T_{\id}^{-1})(b, \lbar 0, t_i,
\lbar \infty, \lbar s).
\]
This simple observation is used to prove the following:

\begin{lem}\label{bijection:partitioned:unary}
Let $A \sub \VF^{n} \times \RV^{m_1}$, $B \sub \VF^{n} \times
\RV^{m_2}$, and $f : A \fun B$ a definable bijection. Then there is a partition of $A$ into definable subsets $A_i$ such that each $f \rest A_i$ is a composition of definable relatively unary bijections.
\end{lem}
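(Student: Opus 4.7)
\emph{Plan.} The proof will proceed by induction on $n$.

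The base case $n = 1$ is immediate: with only one $\VF$-coordinate, the set $E_i = E \mi \set{i}$ is empty for the unique choice $i = 1$, so the condition $(\pr_{E_i} \circ f)(\lbar x) = \pr_{E_i}(\lbar x)$ is vacuously satisfied, and $f$ itself is already a single relatively unary bijection, with no partition required.

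For the inductive step with $n \geq 2$, the strategy is to factor $f$, after a definable partition of $X$, as $f = f_2 \circ f_1$ on each piece, where $f_1$ is a single relatively unary bijection changing only some $\VF$-coordinate $k$, and $f_2$ preserves the $k$-th $\VF$-coordinate exactly. The viability of this factorization rests on choosing, on each piece, an index $k$ such that the one-variable function $b \efun (\pr_k \circ f)(\lbar a, b, \lbar t)$ is injective on each fiber over the remaining $\VF$-coordinates $\lbar a$ and the $\RV$-coordinates $\lbar t$. Corollary~\ref{b:minimal:con:13} applied fiberwise supplies, for each candidate $k$, a partition of every such fiber into pieces on which the projection is either constant or injective in $b$; since $f$ is globally a bijection, on each fiber at least one $k$ must admit the injective case (or the $\RV$-sort part of $f$ is already injective in $b$, in which case a pure $\RV$-sort relabeling serves as the relatively unary step $f_1$). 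A compactness argument in the style of Lemma~\ref{compactness:injectivity} upgrades this fiberwise choice to a definable partition of $X$ on which a single $k$ works uniformly.

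Given such a $k$ on a piece, set $f_1(\lbar a, b, \lbar t) := (\lbar a, (\pr_k \circ f)(\lbar a, b, \lbar t), \lbar t)$. This is a definable bijection of the piece onto its image $Z$, manifestly relatively unary with respect to coordinate $k$. The complementary map $f_2 := f \circ f_1^{-1} : Z \fun f(X)$ then preserves the $k$-th $\VF$-coordinate, since $(\pr_k \circ f_2)(\lbar a, b', \lbar t) = (\pr_k \circ f)(\lbar a, b, \lbar t) = b'$. Viewing $f_2$ as a definable family of bijections parameterized by its $k$-th $\VF$-coordinate --- each member being a bijection between subsets of $\VF^{n-1} \times \RV^{\ast}$ --- the inductive hypothesis applied fiberwise, together with compactness, yields a further definable partition of $Z$ and a decomposition of $f_2$ into a composition of definable relatively unary bijections. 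These bijections, read back at the $n$-coordinate level, still fix coordinate $k$ and change only one of the other $\VF$-coordinates, hence remain relatively unary. Composing with $f_1$ produces the desired decomposition of $f$ on the original piece of $X$.

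\emph{The main obstacle} lies in the coherent global choice of the index $k$ and the associated definable partition of $X$. A naive pointwise choice of $k$ need not be definable, and one must combine the fiberwise trichotomy of Corollary~\ref{b:minimal:con:13} with the global bijectivity of $f$ (to ensure that at least one coordinate varies injectively on each fiber) through a compactness argument to produce finitely many definable pieces on each of which a single $k$ works uniformly. Once this partition is in hand, the inductive descent itself proceeds without further complication.
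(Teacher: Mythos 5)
There is a genuine gap, and it sits exactly where you place the ``main obstacle'': the construction of the first relatively unary factor $f_1$. Your $f_1$ keeps the original $\RV$-coordinates $\lbar t$ and records only the single target coordinate $\pr_k \circ f$, so its injectivity requires that, after a \emph{finite} definable partition, some one $\VF$-coordinate of $f$ be injective on every fiber (your fallback being that the $\RV$-part alone is injective). This claim is false. Take $X = (\VF^{\times})^2$ and $f(a,b) = (a^2, b^2, \rv(a), \rv(b))$; since the residue characteristic is $0$, $(b^2, \rv(b))$ determines $b$, so $f$ is a definable bijection onto a definable $Y \sub \VF^2 \times \RV^2$. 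Fibering over $a$, the map $b \efun \pr_1 f = a^2$ is constant and $b \efun \pr_2 f = b^2$ identifies $b$ with $-b$; for $b$ suitably generic over $\seq{S, a}$ the substitution $b \efun -b$ extends to an automorphism fixing $\seq{S,a}$, so every piece of \emph{any} finite definable partition of $X$ meets some fiber in a pair $(a,b), (a,-b)$ --- there is no definable choice of square roots. Hence no $k$ is fiberwise injective on any finite partition, the $\RV$-part $(\rv(a),\rv(b))$ is not fiberwise injective either (only the joint $\VF$/$\RV$ image separates points), and by symmetry varying the other source coordinate instead does not help. Two further problems compound this: the fiberwise partitions produced by Corollary~\ref{b:minimal:con:13} are indexed by $\RV$-values, hence infinite, and compactness only makes the defining formulas uniform --- it does not turn them into the finite partition the lemma demands; and in your fallback case a ``pure $\RV$-sort relabeling'' $f_1$ fixes all $\VF$-coordinates, so $f_2 = f \circ f_1^{-1}$ preserves no $\VF$-coordinate and your inductive step (viewing $f_2$ as a family over the $k$-th coordinate) does not apply.

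The missing idea, which is how the paper proceeds, is that the first relatively unary factor must carry \emph{extra $\RV$-data} alongside the matched $\VF$-coordinate. The paper applies a standard contraction (via Proposition~\ref{all:subsets:rvproduct}) to each image fiber $f(\fib(X, \lbar a))$, uses Lemma~\ref{RV:image:regularity} to see that each resulting $\rv$-polyball has at most one active $\VF$-coordinate $i$ --- this is what produces the partition $A_1, \ldots, A_n$ --- and then defines $g_{\lbar a, i}$ by $(\lbar a, a_n, \lbar r) \efun (\lbar a, d_i, \lbar r, t_i, \lbar \infty, \lbar s)$, where $(t_i, \lbar \infty, \lbar s)$ is the contracted $\RV$-data. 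Injectivity of $g_{\lbar a, i}$ is joint in $(d_i, t_i, \lbar s)$, supplied by the observation preceding the lemma about $\pvf_i \circ I_{\id}^{-1}$ being injective on each polyball; it is never claimed for $d_i$ alone. In the counterexample above this amounts to taking $f_1(a,b) = (a, b^2, \rv(b))$ rather than $(a, b^2)$. Without some such mechanism your first factor cannot be built, so the inductive descent, which is otherwise set up in the same spirit as the paper's, does not get off the ground.
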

\begin{proof}
We do induction on $n$. Since the base case $n=1$ holds vacuously,
we proceed to the inductive step directly. By Corollary~\ref{all:subsets:rvproduct} and compactness, for each $\lbar a = (a_1, \ldots, a_{n-1}) \in \pvf_{< n}(A)$, there is an $\lbar a$-definable standard contraction $\wh T_{\id, \lbar a}$ on $f(\fib(A , \lbar a))$ such that $(T_{\id, \lbar a} \circ
f)(\fib(A , \lbar a)) = Z_{\lbar a}$ is an $\RV$-pullback. By Lemma~\ref{VF:dim:rv:polydisc}, in each tuple $(\lbar t, \lbar s) = (t_1, \ldots, t_n, \lbar s) \in \prv(Z_{\lbar a})$, there is at most one $i \leq n$ such that $t_i \neq \infty$, that is, each $\rv$-polydisc contained in $Z_{\lbar a}$ is of the form $\rv^{-1}(t_i) \times \{(\lbar 0, t_i, \lbar \infty, \lbar s) \}$
for some $i \leq n$. So there is a partition of $\fib(A , \lbar a)$ into $\lbar a$-definable subsets $A_{\lbar a}^1, \ldots, A_{\lbar a}^n$ such that if $(\lbar a, a_n, \lbar r) \in A_{\lbar a}^i$ then $(T_{\id, \lbar a} \circ f)(\lbar a,
a_n, \lbar r)$ is of the form $(b_i, \lbar 0, t_i, \lbar \infty, \lbar s)$. By the observation above, if $b, b' \in \rv^{-1}(t_i)$ are distinct then
\[
(\pvf_i \circ T_{\id, \lbar a}^{-1})(b_i, \lbar 0, t_i, \lbar
\infty, \lbar s) \neq (\pvf_i \circ T_{\id, \lbar a}^{-1})(b'_i, \lbar
0, t_i, \lbar \infty, \lbar s).
\]
Let $g_{\lbar a, i}$ be the function on $A_{\lbar a}^i$ given by
\[
(\lbar a, a_n, \lbar r) \efun (\lbar a, d_i, \lbar r, t_i,
\lbar \infty, \lbar s),
\]
where
\[
(\prv \circ T_{\id, \lbar a} \circ f)(\lbar a, a_n, \lbar r) = (t_i, \lbar \infty, \lbar s), \quad (\pvf_i \circ f)(\lbar a, a_n, \lbar r) = d_i.
\]
Therefore, after reindexing the $\VF$-coordinates in each $A_{\lbar a}^i$ separately, each $g_{\lbar a, i}$ is an $\lbar a$-definable unary bijection on $A_{\lbar a}^i$ relative to the coordinate $i$ such that $\pvf_i \circ f = \pvf_i \circ g_{\lbar a, i}$. By compactness, there are a partition of $A$ into definable subsets $A_1, \ldots, A_n$ and definable unary bijections $g_i$ on $A_i$ relative to the coordinate $i$ such that $\pvf_i \circ f = \pvf_i \circ g_i$.

For each $i \leq n$ let $h_i$ be the function on $g_i(A_i)$ such that $f \rest A_i = h_i \circ g_i$. For each $a \in (\pvf_i \circ g_i)(A_i)$, since $h_i(\fib(g_i(A_i), a)) = \fib(f(A_i), a)$, by the inductive hypothesis, there is a partition of $\fib(g_i(A_i), a)$ into $a$-definable subsets $D_a^1, \ldots, D_a^l$ such that each $h_i \rest D_a^j$ is a composition of $a$-definable relatively unary
bijections. So the inductive step holds by compactness.
\end{proof}


Below let $12$, $21$ denote the permutations of $\set{1, 2}$.

\begin{lem}\label{2:unit:contracted}
Let $A \sub \VF^2$ be a definable $2$-cell. Then there are standard contractions $\wh T_{12}$ and $\wh R_{21}$ of $A$ such that there is an $\RV[2, \cdot]$-isomorphism
\[
F : (\wh T_{12}(A), \pr_{\leq 2}) \fun (\wh R_{21}(A), \pr_{\leq 2}).
\]
\end{lem}
\begin{proof}
Let $\epsilon$ be the positioning function of $A$ and $t \in \RV$ the paradigm of $A$. If $t = \infty$ then $A$ is the function $\epsilon : \pr_1(A) \fun \pr_2(A)$, which is either a constant function or a bijection. In the former case, since $A$ is essentially just an open ball, the lemma simply follows from
Corollary~\ref{all:subsets:rvproduct}. In the latter case, there
are special bijections $T_2$, $R_1$ on $A$ relative to the coordinates $2$, $1$ such that
\[
T_2(A) = \pr_1(A) \times \set{(0, \infty)}, \quad
R_1(A) = \set{0} \times \pr_2(A) \times \set{\infty}.
\]
So the lemma follows from Remark~\ref{special:dim:1:RV:iso}. For the rest of the proof we assume $t \neq \infty$.

If $\epsilon$ is not balanced in $A$ then $A = \pr_1(A) \times \pr_2(A)$ is an open polydisc. By Corollary~\ref{all:subsets:rvproduct}, there are special bijections $T_1$, $T_2$ on $\pr_1(A)$, $\pr_2(A)$ such that $T_1(\pr_1(A))$, $T_2(\pr_2(A))$ are $\RV$-pullbacks. In this case the standard contractions determined by $(T_1, T_2)$ and $(T_2, T_1)$ are essentially the same.

Suppose that $\epsilon$ is balanced in $A$. Let $r$ be the other paradigm of $\epsilon$. Recall that $\epsilon : \pr_1 (A) \fun \pr_2(A)$ is again a bijection. Let $T_2$ be the special bijection on $A$ relative to the coordinate $2$ given by $(a, b) \efun (a, b - \epsilon(a))$ and $R_1$ the special bijection on $A$ relative to the coordinate $1$ given by $(a, b) \efun (a - \epsilon^{-1}(b), b)$, where $(a, b) \in A$. Clearly
\[
T_2(A) = \pr_1(A) \times \rv^{-1}(t) \times \set{t},\quad
R_1(A) = \rv^{-1}(r) \times \pr_2(A) \times \set{r}.
\]
So, again, the lemma follows from Remark~\ref{special:dim:1:RV:iso}.
\end{proof}


\begin{cor}\label{subset:partitioned:2:unit:contracted}
Let $A \sub \VF^2 \times \RV^m$ be a definable subset. Then there is a definable injection $f : A \fun \VF^2 \times \RV^l$ such that
\begin{enumerate}
  \item $f$ is unary relative to both coordinates,
  \item there are standard contractions $\wh T_{12}$, $\wh R_{21}$ of $f(A)$ such that $(\wh T_{12}(f(A)), \pr_{\leq 2})$, $(\wh R_{21}(f(A)), \pr_{\leq 2})$ are $\RV[2, \cdot]$-isomorphic.
\end{enumerate}
\end{cor}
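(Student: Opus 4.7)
The plan is to combine the fiberwise partition into $2$-cells provided by Lemma~\ref{decom:into:2:units} with the per-cell standard contractions of Lemma~\ref{2:unit:contracted}, threading everything together by compactness. First I would extend Lemma~\ref{decom:into:2:units} to the parametric setting: for each $\lbar s \in \prv X$, apply that lemma to the $\lbar s$-definable fiber $\fib(X, \lbar s) \sub \VF^2$, obtaining an $\lbar s$-definable function into $\RV^{k_{\lbar s}}$ whose fibers are $2$-cells in $\VF^2$. A variation of Lemma~\ref{compactness:injectivity} then produces a single definable function $P : X \fun \RV^k$ such that, for every $(\lbar r, \lbar s) \in \RV^k \times \RV^m$, the subset
\[
\set{(a, b) \in \VF^2 : (a, b, \lbar s) \in X \text{ and } P(a, b, \lbar s) = \lbar r}
\]
is a $2$-cell.

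Setting $l = m + k$, I would then define $f : X \fun \VF^2 \times \RV^l$ by $f(a, b, \lbar s) = (a, b, \lbar s, P(a, b, \lbar s))$. This $f$ is a definable bijection onto its image $Y$ which leaves both $\VF$-coordinates fixed, so it is unary relative to each of them. By construction, every $\prv$-fiber of $Y$ is a $2$-cell in $\VF^2$.

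For the second stage, I would apply Lemma~\ref{2:unit:contracted} to each fiber of $Y$. For every $\lbar s' \in \prv Y$, the lemma provides $\lbar s'$-definable standard contractions $\wh{I_{12,\lbar s'}}$ and $\wh{J_{21,\lbar s'}}$ of $\fib(Y, \lbar s')$ together with an $\lbar s'$-definable isomorphism between the associated $\RV[2, \cdot]$-objects. Since a standard contraction is specified by a finite sequence of relatively special bijections, each being given in turn by a locus and a focus map (which are first-order data), a compactness argument in the style of Lemma~\ref{compactness:injectivity} and Lemma~\ref{bijection:partitioned:unary} assembles these fiberwise ingredients into definable standard contractions $\wh{I_{12}}$ and $\wh{J_{21}}$ of $Y$. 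The same argument produces a single definable isomorphism between $(\wh{I_{12}}(f(X)), \pr_{\leq 2})$ and $(\wh{J_{21}}(f(X)), \pr_{\leq 2})$.

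The main obstacle is the second compactness step: verifying that the fiberwise standard contractions genuinely patch into a globally definable standard contraction of $Y$, rather than merely a definable bijection that happens to restrict to a standard contraction on every fiber. This requires a $p$-closedness verification in the sense of Section~\ref{section:logic}, ensuring that a finite definable partition of $\prv Y$ indexed by formulas specifying fiberwise focus-map data can be collapsed into one definable focus map at each stage of the composition. Once this is checked, the unary relativity of $f$ and the isomorphism condition are first-order and transfer automatically from the fiberwise conclusion of Lemma~\ref{2:unit:contracted}.
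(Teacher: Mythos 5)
Your construction follows the paper's own proof almost step for step: first apply Lemma~\ref{decom:into:2:units} fiberwise and use compactness to obtain the map $f$ that records the $2$-cell decomposition in new $\RV$-coordinates (so $f$ fixes both $\VF$-coordinates and is therefore unary relative to each), then apply Lemma~\ref{2:unit:contracted} to each $\prv$-fiber of $f(X)$ and glue the fiberwise standard contractions by compactness. The patching of fiberwise focus-map data that you single out as the main obstacle is in fact the routine part: a focus map together with its locus is first-order data, and exactly this kind of gluing is performed without comment elsewhere in the paper (for instance in the construction of relatively special bijections preceding Definition~\ref{defn:standard:contraction} and in Lemma~\ref{isp:VF:fiberwise:contract:isp}).

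The genuine gap is your closing claim that ``the isomorphism condition is first-order and transfers automatically from the fiberwise conclusion.'' It does not. Compactness yields a definable bijection $F : \wh{I_{12}}(f(X)) \fun \wh{J_{21}}(f(X))$ commuting with the projection to the parameter coordinates and restricting, over each parameter value $\lbar s$, to an $\lbar s$-definable isomorphism. But for $F$ to be an isomorphism of $(\wh{I_{12}}(f(X)), \pr_{\leq 2})$ onto $(\wh{J_{21}}(f(X)), \pr_{\leq 2})$ one must verify the finite-to-finite correspondence condition of Definition~\ref{defn:RV:cat} with respect to $\pr_{\leq 2}$: for each value $\lbar t$ of $\pr_{\leq 2}$, the image under $\pr_{\leq 2} \circ F$ of the \emph{entire} fiber $\pr_{\leq 2}^{-1}(\lbar t)$ must be finite. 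That fiber runs over infinitely many parameter values $\lbar s$, so fiberwise finiteness (each conclusion being relative to the expanded base $\seq{\lbar s}$) does not by itself give global finiteness; what is needed is that $(\pr_{\leq 2} \circ F)(\lbar t, \lbar s)$ is algebraic over $\lbar t$ alone. This is precisely why the paper, at this point, applies Lemma~\ref{acl:VF:transfer:acl:RV} and Lemma~\ref{function:rv:to:vf:finite:image} as in the proof of Lemma~\ref{RV:morphism:condition}, using the $\VF$-level lift of $F$ furnished by the two standard contractions, to obtain the required algebraicity and then finiteness by compactness. (The weight condition, being pointwise, does transfer automatically.) With this supplement your argument is complete; without it the final assertion is unsupported.
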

\begin{proof}
By Lemma~\ref{decom:into:2:units}, there is a definable function $f: A \fun \VF^2 \times \RV^l$ such that $f(A)$ is a $2$-cell and, for each $(\lbar a, \lbar t) \in A$, $f(\lbar a, \lbar t) = (\lbar a, \lbar t, \lbar s)$ for some $\lbar s \in \RV^{l-m}$. By Lemma~\ref{2:unit:contracted} and compactness, there are standard contractions $\wh T_{12}$ and $\wh R_{21}$ of $f(A)$ into $\RV^{k+l}$ such that there is a commutative diagram:
\[
\bfig
  \Vtriangle(0,0)/->`->`->/<400,400>[\wh T_{12}(f(A))`\wh R_{21}(f(A))`\RV^l; F`\pr_{> k}`\pr_{> k}]
\efig
\]
where $F$ is a definable bijection. By inspection of the proof of Lemma~\ref{2:unit:contracted} and how Lemma~\ref{dcl:to:ac} is used in Remark~\ref{special:dim:1:RV:iso}, we see that $F$ is indeed an $\RV[2, \cdot]$-isomorphism between the objects $(\wh T_{12}(f(A)), \pr_{\leq 2})$ and $(\wh R_{21}(f(A)), \pr_{\leq 2})$.
\end{proof}

\section{The kernel of $\mathbb{L}$ without volume forms}\label{section:kernel}

For notational simplicity, we shall denote by $\mathbb{L}$ all the semigroup (or group) homomorphisms induced by the map $\mathbb{L}$. In this section we shall focus on the categories without volume forms and give a canonical description of the kernel of $\mathbb{L}$. This will yield various invariants of definable bijections that satisfy a kind of Fubini's theorem.

\subsection{Blowups in $\RV$ and the congruence relation $\isp$}

The notion of a special bijection in the $\VF$-sort has a counterpart in the $\RV$-sort, namely that of a blowup:

\begin{defn}\label{defn:blowup}
Let $(U, f) \in \RV[k,\cdot]$ such that $f_{k}(\lbar t) \neq \infty$ and $f_{k}(\lbar t) \in \acl(f_{1}(\lbar t), \ldots, f_{k-1}(\lbar t))$ for all $\lbar t \in U$. Let $(U, f)^{\sharp} = (U^{\sharp}, f^{\sharp}) \in \RV[k,\cdot]$ such that $U^{\sharp} = U \times \RV^{>1}$ and, for any $(\lbar t, s) \in U^{\sharp}$,
\[
f^{\sharp}_{i}(\lbar t, s) = f_{i}(\lbar t) \text{ for }1 \leq i < k, \quad
f^{\sharp}_{k}(\lbar t, s) = s f_{k}(\lbar t).
\]
The object $(U, f)^{\sharp}$ is an \emph{elementary blowup} of $(U, f)$. An elementary blowup of any subobject of $(U, f)$ is an \emph{elementary sub-blowup} of $(U, f)$.

Let $(V, g) \in \RV[k,\cdot]$ and $(C, g \rest C) \in \RV[k,\cdot]$ a subobject of $(V, g)$. Let $F : (U, f) \fun (C, g \rest C)$ be an isomorphism. Then
\[
(U, f)^{\sharp} \uplus (V \mi C, g \rest (V \mi C)) = (U^{\sharp} \uplus (V \mi C),
f^{\sharp} \uplus (g \rest (V \mi C)))
\]
is a \emph{blowup of $(V, g)$ via $F$}, written as $(V, g)^{\sharp}_{F}$, where the subscript $F$ may be dropped when its absence will not cause confusion. The subobject $(C, g \rest C)$ (or the subset $C$) is called the \emph{blowup locus} of $(V, g)^{\sharp}_{F}$. Let $(W, h) \in \RV[k,\cdot]$ be isomorphic to a subobject of $(V, g)$. Then the blowup of $(W, h)$
induced by $F$, that is, the disjoint union of an elementary sub-blowup of $(U, f)$ and a subobject of $(W, h)$, is a \emph{sub-blowup of $(V, g)$ via $F$}.

An \emph{iterated blowup} is a composition of finitely many blowups. The \emph{length} of an iterated blowup is the length of the composition, that is, the number of the blowups involved.
\end{defn}

In the above definition, the condition that the coordinate of interest is algebraically dependent on the other ones is needed for matching blowups with special bijections, since, otherwise, we will not be able to find enough centers of $\rv$-balls to construct focus maps.

Note that if there is an elementary blowup of $(U, f) \in \RV[k,\cdot]$ then $\dim_{\RV}(f(U)) < k$. Also, there is at most one elementary blowup of $(U, f)$ with respect to any coordinate of $f(U)$. We should have included the index of the ``blown up'' coordinate as a part of the data for an elementary blowup. Since, in context, either this is
clear or it does not need to be spelled out, we shall suppress mentioning it below for notational ease.

Definition~\ref{defn:blowup} is stated relative to the underlying
substructure $S$. If an object $(U, f)$ is $\lbar x$-definable with extra parameters $\lbar x$, then the iterated blowups of $(U, f)$ should be $\lbar x$-definable.

\begin{rem}\label{remark:blowup:restricted}
Let $(U^{\sharp}, f^{\sharp})$ be an elementary blowup of $(U, f) \in \RV[k,\cdot]$. Since $r_k \in \acl(\lbar r_k)$ for each $(\lbar r_k, r_k) \in f(U)$, we see that the projection map $\pr_{\leq k} : U^{\sharp} \fun U$ is an $\RV[k,\cdot]$-morphism. Also, since for each $(\lbar r_k, u) \in f^{\sharp}(U^{\sharp})$ we have
\[
(f^{\sharp})^{-1}(\lbar r_k, u)  = \bigcup_{ r_k \in A }
   (f^{-1}(\lbar r_k, r_k) \times \set{ u / r_k}),
\]
where $A = \{r_k \in \fib(f(U), \lbar r_k) : u / r_k \in \RV^{>1} \}$, clearly if $(U, f) \in \RV[k]$ then $(U^{\sharp}, f^{\sharp}) \in
\RV[k]$. So any iterated blowup of an object in $\RV[k]$ is an object in $\RV[k]$.
\end{rem}

The results below will be stated only for the more general
categories $\RV[k,\cdot]$, $\RV[*,\cdot]$, etc. But, by
Remark~\ref{remark:blowup:restricted}, they are easily seen to
hold when restricted to $\RV[k]$, $\RV[*]$, etc.\ as well.

\begin{lem}\label{elementary:blowups:preserves:iso}
Let $\mathbf{U}, \mathbf{V} \in \RV[k,\cdot]$ and $\mathbf{U}^{\sharp}$, $\mathbf{V}^{\sharp}$ two elementary blowups. If $[\mathbf{U}] = [\mathbf{V}]$ then $[\mathbf{U}^{\sharp}] = [\mathbf{V}^{\sharp}]$.
\end{lem}
\begin{proof}
Let $F : \mathbf{U} \fun \mathbf{V}$ be an isomorphism. Let $F^{\sharp} : \mathbf{U}^{\sharp} \fun \mathbf{V}^{\sharp}$ be the bijection given by $(\lbar t, s) \efun (F(\lbar t), s)$, which is easily seen to be an $\RV[k,\cdot]$-isomorphism. We leave the details to the reader.
%
\end{proof}

\begin{cor}\label{eblowup:same:loci:iso}
Let $F : \mathbf{U} \fun \mathbf{V}$ be an $\RV[k,\cdot]$-isomorphism. Let $\mathbf{U}^{\sharp}$, $\mathbf{V}^{\sharp}$ be respectively two blowups of $\mathbf{U}$, $\mathbf{V}$ with isomorphic blowup loci according to $F$. Then $\mathbf{U}^{\sharp}$, $\mathbf{V}^{\sharp}$ are isomorphic.
\end{cor}

\begin{lem}\label{isp:transitive}
Let $\mathbf{U} = (U, f)$, $\mathbf{V} = (V, g)$ be isomorphic objects in $\RV[k,\cdot]$. Let $\mathbf{U}_1$, $\mathbf{V}_1$ be two iterated blowups of $\mathbf{U}$, $\mathbf{V}$ of length $m$, $n$, respectively. Then
there are isomorphic iterated blowups $\mathbf{U}_2$, $\mathbf{V}_2$ of $\mathbf{U}_1$, $\mathbf{V}_1$ of lengths $n$, $m$, respectively.
\end{lem}
\begin{proof}
Fix an isomorphism $I: \mathbf{U} \fun \mathbf{V}$. We do induction on the sum $l = m + n$. For the base case $l = 1$, without loss of generality, we may assume that $n = 0$. Let $C$ be the blowup locus of $\mathbf{U}_1$. Clearly $\mathbf{V}$ may be blown up by using the same elementary blowup as $\mathbf{U}_1$, where the blowup locus is changed to $I(C)$. So the base case follows from Corollary~\ref{eblowup:same:loci:iso}.

We proceed to the inductive step. Let $\mathbf{U}^{\sharp}$, $\mathbf{V}^{\sharp}$ be the first blowups in $\mathbf{U}_1$, $\mathbf{V}_1$ and $C$, $D$ their blowup loci, respectively. Let $\mathbf{U}'^{\sharp}$, $\mathbf{V}'^{\sharp}$ be the corresponding elementary blowups contained in $\mathbf{U}^{\sharp}$, $\mathbf{V}^{\sharp}$. If, say, $n = 0$, then by the argument in the base case $\mathbf{V}$ may be blown up to an object that is isomorphic to $\mathbf{U}^{\sharp}$ and hence the inductive
hypothesis may be applied. So let us assume that $m,n > 0$. Let $A = C \cap I^{-1}(D)$ and $B = I(C) \cap D$. Since $(A, f \rest A)$ and $(B, g \rest B)$ are isomorphic, by Lemma~\ref{elementary:blowups:preserves:iso}, the
elementary sub-blowups of $\mathbf{U}'^{\sharp}$, $\mathbf{V}'^{\sharp}$ that correspond to $(A, f \rest A)$ and $(B, g \rest B)$ are isomorphic. Then, it is not hard to see that the blowup $\mathbf{U}^{\sharp\sharp}$ of $\mathbf{U}^{\sharp}$ using the locus $I^{-1}(D) \mi C$ and its corresponding elementary sub-blowup of $\mathbf{V}'^{\sharp}$ and the blowup $\mathbf{V}^{\sharp\sharp}$ of $\mathbf{V}^{\sharp}$ using the locus $I(C) \mi D$ and its corresponding elementary sub-blowup of $\mathbf{U}'^{\sharp}$ are isomorphic.

Applying the inductive hypothesis to the iterated blowups $\mathbf{U}^{\sharp\sharp}$, $\mathbf{U}_1$ of $\mathbf{U}^{\sharp}$, we obtain an iterated blowup $\mathbf{U}^{\sharp3}$ of $\mathbf{U}^{\sharp\sharp}$ of length $m - 1$ and an iterated blowup $\mathbf{U}_1^{\sharp}$ of $\mathbf{U}_1$ of length $1$ such that they are isomorphic. Similarly, we obtain an iterated blowup $\mathbf{V}^{\sharp3}$ of $\mathbf{V}^{\sharp\sharp}$ of length $n - 1$ and an iterated blowup $\mathbf{V}_1^{\sharp}$ of $\mathbf{V}_1$ of length $1$ such that they are isomorphic. Applying the inductive hypothesis again to the iterated blowups $\mathbf{U}^{\sharp3}$, $\mathbf{V}^{\sharp3}$ of $\mathbf{U}^{\sharp\sharp}$, $\mathbf{V}^{\sharp\sharp}$, we obtain an iterated blowup $\mathbf{U}^{\sharp4}$ of $\mathbf{U}^{\sharp3}$ of length $n - 1$ and an iterated blowup $\mathbf{V}^{\sharp4}$ of $\mathbf{V}^{\sharp3}$ of length $m - 1$ such that they are isomorphic. Finally, applying the inductive hypothesis to the iterated blowups
$\mathbf{U}^{\sharp4}$, $\mathbf{U}_1^{\sharp}$ of $\mathbf{U}^{\sharp3}$, $\mathbf{U}_1^{\sharp}$ and the iterated blowups $\mathbf{V}^{\sharp4}$, $\mathbf{V}_1^{\sharp}$ of $\mathbf{V}^{\sharp3}$, $\mathbf{V}_1^{\sharp}$, we obtain an iterated blowup $\mathbf{U}_2$ of $\mathbf{U}_1^{\sharp}$ of length $n - 1$ and an iterated blowup $\mathbf{V}_2$ of $\mathbf{V}_1^{\sharp}$ of length $m - 1$ such that $\mathbf{U}^{\sharp4}$, $\mathbf{U}_2$, $\mathbf{V}^{\sharp4}$, and $\mathbf{V}_2$ are all isomorphic. This
process is illustrated as follows:
\[
\bfig
  \square(0,0)/.`=``./<500,900>[\mathbf{U}`\mathbf{U}^{\sharp}`
  \mathbf{V}`\mathbf{V}^{\sharp};1```1]
  \square(500,0)/.```./<1000,900>[\mathbf{U}^{\sharp}`\mathbf{U}_1`
  \mathbf{V}^{\sharp}`\mathbf{V}_1; m - 1```n - 1]
  \morphism(500,900)/./<500,-300>[\mathbf{U}^{\sharp}`\mathbf{U}^{\sharp\sharp};1]
  \morphism(500,0)/./<500,300>[\mathbf{V}^{\sharp}`
   \mathbf{V}^{\sharp\sharp};1]
  \morphism(1000,300)/=/<0,300>[\mathbf{V}^{\sharp\sharp}`
   \mathbf{U}^{\sharp\sharp};]
  \morphism(1500,900)/./<500,0>[\mathbf{U}_1`\mathbf{U}_1^{\sharp};1]
  \morphism(1500,0)/./<500,0>[\mathbf{V}_1`\mathbf{V}_1^{\sharp};1]
  \morphism(1000,600)/./<1000,0>[\mathbf{U}^{\sharp\sharp}`\mathbf{U}^{\sharp3};
   m - 1]
  \morphism(1000,300)/./<1000,0>[\mathbf{V}^{\sharp\sharp}`\mathbf{V}^{\sharp3};
   n - 1]
  \morphism(2000,900)/=/<0,-300>[\mathbf{U}_1^{\sharp}`\mathbf{U}^{\sharp3};]
  \morphism(2000,0)/=/<0,300>[\mathbf{V}_1^{\sharp}`\mathbf{V}^{\sharp3};]
  \morphism(2000,600)/./<1000,0>[\mathbf{U}^{\sharp3}`\mathbf{U}^{\sharp4};n - 1]
  \morphism(2000,300)/./<1000,0>[\mathbf{V}^{\sharp3}`\mathbf{V}^{\sharp4};m - 1]
  \morphism(3000,300)/=/<0,300>[\mathbf{V}^{\sharp4}`\mathbf{U}^{\sharp4};]
  \morphism(2000,900)/./<1000,0>[\mathbf{U}_1^{\sharp}`\mathbf{U}_2;n - 1]
  \morphism(2000,0)/./<1000,0>[\mathbf{V}_1^{\sharp}`\mathbf{V}_2;m - 1]
  \morphism(3000,0)/=/<0,300>[\mathbf{V}_2`\mathbf{V}^{\sharp4};]
  \morphism(3000,900)/=/<0,-300>[\mathbf{U}_2`\mathbf{U}^{\sharp4};]
\efig
\]
So $\mathbf{U}_2$ and $\mathbf{V}_2$ are as desired.
\end{proof}

\begin{cor}\label{blowup:equi:class}
Let $[\mathbf{U}] = [\mathbf{U}']$ and $[\mathbf{V}] = [\mathbf{V}']$ in $\RV[k,\cdot]$. Suppose that there are isomorphic iterated blowups of $\mathbf{U}$, $\mathbf{V}$. Then there are isomorphic iterated blowups of $\mathbf{U}'$, $\mathbf{V}'$.
\end{cor}

\begin{defn}
Let $\isp[k, \cdot]$ be the subclass of $\ob \RV[k,\cdot] \times \ob \RV[k,\cdot]$ of those pairs $(\mathbf{U}, \mathbf{V})$ such that there exist isomorphic iterated blowups $\mathbf{U}^{\sharp}$, $\mathbf{V}^{\sharp}$. Let
\begin{gather*}
\isp[*, \cdot] = \coprod_{0 \leq i} \isp[i, \cdot], \quad \isp[k] = \isp[k, \cdot] \cap (\ob \RV[k] \times \ob \RV[k]),\\
\isp[*] = \isp[*, \cdot] \cap \coprod_{0 \leq i}(\ob \RV[i] \times \ob \RV[i]).
\end{gather*}
\end{defn}

We will just write $\isp$ for all these classes if there is no
danger of confusion. When the underlying substructure $S$ is
expanded with some extra parameters $\lbar x$ we shall write $\isp
\la \lbar x \ra$ for the accordingly expanded classes.

By Corollary~\ref{blowup:equi:class}, $\isp$ may be regarded as a
binary relation on isomorphism classes.

\begin{lem}\label{isp:congruence}
$\isp[k, \cdot]$ is a semigroup congruence relation and $\isp[*,
\cdot]$ is a semiring congruence relation.
\end{lem}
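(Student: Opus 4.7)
The plan is to verify separately that $\isp$ is an equivalence relation, that it is compatible with disjoint union (for part (a)), and—via Remark~\ref{remark:semiring:cong}—that it is compatible with cartesian product (for part (b)).

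Reflexivity will be witnessed by length-$0$ iterated blowups, and symmetry is built into the definition. For transitivity, given $(X_1, Y), (Y, X_2) \in \isp$ with witnesses $X_1^{\sharp} \cong Y^{\sharp}$ and $Y^{\ast} \cong X_2^{\ast}$, I plan to apply Lemma~\ref{isp:transitive} three times. First, apply it to the identity isomorphism $Y \cong Y$ with the two iterated blowups $Y^{\sharp}$ and $Y^{\ast}$, producing iterated blowups $Y^{\sharp\sharp}$ of $Y^{\sharp}$ and $Y^{\ast\ast}$ of $Y^{\ast}$ with $Y^{\sharp\sharp} \cong Y^{\ast\ast}$. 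Then apply it to $X_1^{\sharp} \cong Y^{\sharp}$, pairing the trivial length-$0$ blowup of $X_1^{\sharp}$ against the blowup $Y^{\sharp\sharp}$ of $Y^{\sharp}$, to lift $Y^{\sharp\sharp}$ to an isomorphic iterated blowup of $X_1$; symmetrically lift $Y^{\ast\ast}$ to an iterated blowup of $X_2$. Chaining the resulting isomorphisms produces isomorphic iterated blowups of $X_1$ and $X_2$, hence $(X_1, X_2) \in \isp$.

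Additive compatibility is immediate because disjoint union commutes with iterated blowups: if $X^{\sharp} \to X$ and $Y^{\sharp} \to Y$ are iterated blowups, concatenating the two sequences of elementary blowups (keeping each blowup locus inside the corresponding summand) yields an iterated blowup of $X \uplus Y$ canonically isomorphic to $X^{\sharp} \uplus Y^{\sharp}$. Thus witnesses $X_1^{\sharp} \cong X_2^{\sharp}$ and $Y_1^{\sharp} \cong Y_2^{\sharp}$ combine by disjoint union to give $(X_1 \uplus Y_1)^{\sharp} \cong (X_2 \uplus Y_2)^{\sharp}$, settling the semigroup congruence claim for $\isp[k,\cdot]$. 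Since addition in $\gsk \RV[\ast, \cdot]$ is graded, this also handles the additive part of the semiring congruence claim.

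For multiplicative compatibility, by Remark~\ref{remark:semiring:cong} it suffices to show that $(X_1, X_2) \in \isp[k, \cdot]$ implies $(X_1 \times Z, X_2 \times Z) \in \isp[k+l, \cdot]$ for every $(Z, h) \in \ob \RV[l, \cdot]$. The key observation is that an elementary blowup of $(X, g) \in \RV[k, \cdot]$ at its $i$-th coordinate with blowup locus $C \subseteq X$ induces, canonically and up to a permutation of $\RV$-sort factors, an elementary blowup of $(X \times Z, g \times h)$ at the $i$-th coordinate with locus $C \times Z$: the algebraicity witness $g_{|i}(\lbar t) \in \acl(g_{|j}(\lbar t) : j < i)$ that licensed the original blowup transfers verbatim, and the resulting blown-up function is exactly $g^{\sharp} \times h$. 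Iterating, any iterated blowup $X^{\sharp}$ of $X$ yields an iterated blowup $X^{\sharp} \times Z$ of $X \times Z$, and an isomorphism $X_1^{\sharp} \cong X_2^{\sharp}$ in $\RV[k,\cdot]$ extends by the identity on the $Z$-factor to an isomorphism $X_1^{\sharp} \times Z \cong X_2^{\sharp} \times Z$ in $\RV[k+l, \cdot]$. The adjusted $\boxtimes$-product with $\RV[0, \cdot]$-objects is handled identically, since there the function on the product is simply pulled back along the projection to the $X$-factor, so blowups transport without change. The main, and essentially only, obstacle is the careful bookkeeping of coordinate indices when permuting the newly introduced $\RV^{>1}$-factor past the coordinates of $Z$; the substantive content is already contained in Lemma~\ref{elementary:blowups:preserves:iso} and Lemma~\ref{isp:transitive}.
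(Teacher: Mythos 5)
Your proposal is correct and follows essentially the same route as the paper: reflexivity and symmetry are immediate, transitivity is obtained by the same chain of applications of Lemma~\ref{isp:transitive} (blow up the two witnesses over the middle object, then transport across the witnessing isomorphisms), and compatibility with $\uplus$ and $\times$ plus Remark~\ref{remark:semiring:cong} finishes the argument. The only difference is that you spell out the transport of blowups along disjoint unions and cartesian products (locus $C\times Z$, function $g^{\sharp}\times h$ up to a coordinate permutation), which the paper dismisses as ``easily checked.''
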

\begin{proof}
Obviously $\isp[k, \cdot]$ is reflexive and symmetric. Suppose that $([\mathbf{U}_1], [\mathbf{U}_2]), ([\mathbf{U}_2], [\mathbf{U}_3]) \in
\isp[k, \cdot]$. Then, by Lemma~\ref{isp:transitive}, there are
iterated blowups $\mathbf{U}_1^{\sharp}$ of $\mathbf{U}_1$, $\mathbf{U}_{2}^{\sharp 1}$ and $\mathbf{U}_{2}^{\sharp 2}$ of $\mathbf{U}_2$, and
$\mathbf{U}_3^{\sharp}$ of $\mathbf{U}_3$ such that they are all isomorphic. So $\isp[k, \cdot]$ is transitive and hence is an
equivalence relation. For any $[\mathbf{W}] \in \gsk \RV[l,\cdot]$, the following are
easily checked:
\[
([\mathbf{U}_1 \uplus \mathbf{W}], [\mathbf{U}_2 \uplus \mathbf{W}])\in \isp[k, \cdot],\quad
([\mathbf{U}_1 \times \mathbf{W}], [\mathbf{U}_2 \times \mathbf{W}])\in \isp[*, \cdot].
\]
These yield the desired congruence relations.
\end{proof}


\subsection{Blowups and special bijections}

For any $(U, f) \in \RV[k,\cdot]$ and any special bijection $T$ on $\bb L(U, f)$, we shall write $U_T$ for the subset $(\prv \circ T)(\mathbb{L}(U, f))$ and $\mathbf{U}_T$ for the object $(U_T, \pr_{\leq k})$.

\begin{lem}\label{special:bi:lh:1:equi:blowup}
Let $(U, f) \in \RV[k,\cdot]$ and $\eta$ a centripetal transformation on $\mathbb{L}(U, f)$ with focus map $\lambda$ such that the locus of $\lambda$ is $\mathbb{L}(U, f)$. Then $\mathbf{U}_{\can \circ \eta}$ is isomorphic to an elementary blowup of $(U, f)$.
\end{lem}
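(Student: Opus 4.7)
The plan is to exhibit an explicit isomorphism between $(Z, \pr_{\leq k})$ and an elementary blowup of $(Y, f)$. Assume, by relabelling coordinates, that the focus map $\lambda$ targets the first $\VF$-coordinate of $\mathbb{L}(Y, f)$; the resulting blowup will then be in the first coordinate of $f$. Since $\lambda(\lbar a_1, \lbar u) \in \rv^{-1}(f_{|1}(\lbar u))$, the slice of $C^{\sharp}$ above any $\lbar u$ with $f_{|1}(\lbar u) = \infty$ is empty (the requirement $\vrv(t) > \vrv(f_{|1}(\lbar u)) = \infty$ has no solution), so such $\lbar u$ do not contribute to $Z$ and I may harmlessly pass to the subobject on which $f_{|1} \neq \infty$.

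The first substantive step is to verify that $(Y, f)$ satisfies the algebraicity precondition for an elementary blowup of the first coordinate, namely $f_{|1}(\lbar u) \in \acl(f_{|2}(\lbar u), \ldots, f_{|k}(\lbar u))$ for every $\lbar u \in Y$. Fix such a $\lbar u$, set $\lbar r = (f_{|2}(\lbar u), \ldots, f_{|k}(\lbar u))$, and pick any $\lbar a_1 \in \rv^{-1}(\lbar r)$. The map $\lbar v \efun \lambda(\lbar a_1, \lbar v)$ is an $\lbar a_1$-definable function from the $\lbar a_1$-definable subset $\set{\lbar v \in Y : (f_{|2}, \ldots, f_{|k})(\lbar v) = \lbar r} \sub \RV^m$ into $\VF$, hence has finite image by Lemma~\ref{function:rv:to:vf:finite:image}. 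Applying $\rv$, the set $\set{f_{|1}(\lbar v) : \lbar v \in Y,\ (f_{|2}, \ldots, f_{|k})(\lbar v) = \lbar r}$ is a finite $\lbar r$-definable subset of $\RV$ containing $f_{|1}(\lbar u)$, as required.

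Next I compute $Z$ and exhibit the isomorphism. A generic element of $\mathbb{L}(Y, f)$ is of the form $(a_1, \lbar a_1, \lbar u)$ with $\rv(a_1) = f_{|1}(\lbar u)$ and $\rv(\lbar a_1) = \lbar r$. After $\eta$ and $\can$, the $\rv$-images of the $\VF$-coordinates become $(\rv(a_1 - \lambda(\lbar a_1, \lbar u)), f_{|2}(\lbar u), \ldots, f_{|k}(\lbar u))$; by the indexing scheme of Notation~\ref{indexing}, these newly created $\RV$-coordinates are indexed by $1, \ldots, k$, so $\prv$ produces tuples $(r, f_{|2}(\lbar u), \ldots, f_{|k}(\lbar u), \lbar u) \in Z$ with $\vrv(r) > \vrv(f_{|1}(\lbar u))$. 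Let $(Y^{\sharp}, f^{\sharp})$ be the elementary blowup of $(Y, f)$ in the first coordinate, so $Y^{\sharp} = Y \times \RV^{>1}$ with $f^{\sharp}_{|1}(\lbar u, s) = s f_{|1}(\lbar u)$ and $f^{\sharp}_{|i}(\lbar u, s) = f_{|i}(\lbar u)$ for $i \geq 2$. Define the definable map
\[
F : Z \fun Y^{\sharp}, \qquad (r, f_{|2}(\lbar u), \ldots, f_{|k}(\lbar u), \lbar u) \efun (\lbar u,\, r / f_{|1}(\lbar u)).
\]
The constraint $\vrv(r) > \vrv(f_{|1}(\lbar u))$ is exactly equivalent to $r/f_{|1}(\lbar u) \in \RV^{>1}$, so $F$ is a well-defined bijection with inverse $(\lbar u, s) \efun (s f_{|1}(\lbar u), f_{|2}(\lbar u), \ldots, f_{|k}(\lbar u), \lbar u)$.

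To conclude, $F$ is an isomorphism in $\RV[k, \cdot]$ essentially by construction: a direct calculation gives $f^{\sharp}(F(z)) = \pr_{\leq k}(z)$ for every $z \in Z$, so the companions of $(Z, \pr_{\leq k})$ and $(Y^{\sharp}, f^{\sharp})$ correspond under $F$ on the first $k$ coordinates, and both the finite-to-finite condition and the equality of weights (the volumetric condition) hold trivially. The principal obstacle is the algebraicity precondition above; once that is in hand, the remainder reduces to tracking the indexing conventions on $Z$ and performing a direct verification.
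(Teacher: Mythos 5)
Your proposal is correct and takes essentially the same route as the paper: first secure the algebraicity precondition so that the elementary blowup $(Y^{\sharp}, f^{\sharp})$ exists, then write down the explicit isomorphism $Z \fun Y \times \RV^{>1}$ sending a point of $Z$ over $\lbar u$ with new first coordinate $r$ to $(\lbar u, r/f_{|1}(\lbar u))$, noting that $f^{\sharp}$ pulls back to $\pr_{\leq k}$ so the morphism conditions are automatic. The only cosmetic differences are that you derive the algebraicity directly from Lemma~\ref{function:rv:to:vf:finite:image} applied to the fiber over $\lbar r$, where the paper routes the same fact through Lemma~\ref{acl:VF:transfer:acl:RV}, and that you dispose of the degenerate case $f_{|1}(\lbar u) = \infty$ by observing its fibers are empty, where the paper simply assumes $\infty \notin \pr_1 f(Y)$ without loss of generality.
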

\begin{proof}
Without loss of generality, we may assume $\dom(\lambda) = \pr_{> 1} (\mathbb{L}(U, f))$ and $0 \notin \ran(\lambda)$, that is, $\infty \notin \pr_1 (f(U))$. Since $\lambda$ is a function, for every $(r_1, \lbar r_1) \in f(U)$ and every $\lbar a_1 \in \rv^{-1}(\lbar r_1)$ we have, by Lemma~\ref{dcl:to:ac}, $r_1 \in \acl(\lbar a_1)$ and hence, by Lemma~\ref{acl:VF:transfer:acl:RV}, $r_1 \in \acl(\lbar
r_1)$. So the elementary blowup $(U^{\sharp}, f^{\sharp})$ of $(U,
f)$ in the first coordinate of $f(U)$ does exist. It is easy to see that the function $F : U_{\can \circ \eta} \fun U^{\sharp}$ given by $(r_1, \lbar r_1, f(\lbar t), \lbar t) \efun (\lbar t, r_1/f_{1}(\lbar t))$ is an isomorphism between $\mathbf{U}_{\can \circ \eta}$ and $(U^{\sharp}, f^{\sharp})$, where $\lbar t \in U$ and $f(\lbar t) = (f_{1}(\lbar t), \lbar r_1)$.
\end{proof}

\begin{cor}\label{special:correspond:blowup}
Let $\mathbf{U} \in \RV[k,\cdot]$ and $T$ a special bijection on $\bb L \mathbf{U}$. Then $\mathbf{U}_T$ is isomorphic to an iterated blowup of $\mathbf{U}$.
\end{cor}
\begin{proof}
By induction on the length $\lh(T)$ of $T$ and Lemma~\ref{isp:transitive}, this is easily reduced to the case $\lh(T) = 1$, which follows from
Lemma~\ref{special:bi:lh:1:equi:blowup}.
\end{proof}

\begin{cor}\label{kernel:dim:1}
Let $\mathbf{U}_1, \mathbf{U}_2 \in \RV[1,\cdot]$. If $\bb L \mathbf{U}_1$ is definably bijective to $\bb L\mathbf{U}_2$ then $([\mathbf{U}_1], [\mathbf{U}_2]) \in \isp$.
\end{cor}
\begin{proof}
By Remark~\ref{special:dim:1:RV:iso}, there are special bijections $T_1$, $T_2$ on $\bb L \mathbf{U}_1$, $\bb L \mathbf{U}_2$ such that $\mathbf{U}_{T_1}$, $\mathbf{U}_{T_2}$ are isomorphic. So the assertion follows from Corollary~\ref{special:correspond:blowup}.
\end{proof}

\begin{lem}\label{elementary:blowup:same:VF}
Suppose that the substructure $S$ is $(\VF, \Gamma)$-generated. Let $(U^{\sharp}, f^{\sharp})$ be an elementary blowup of $(U, f) \in \RV[k,\cdot]$. Then there is a special bijection $T$ of length $1$ on $\mathbb L(U, f)$ such that the locus of $T$ is $\mathbb L(U, f)$ and there is a commutative diagram
\[
\bfig
  \square(700,0)/->`->`->`->/<700,400>[T(\mathbb{L}(U, f))`
  U_T`\mathbb{L}(U^{\sharp}, f^{\sharp})`U^{\sharp};
  \prv`F`F_{\downarrow}`\prv]
  \morphism(0,400)/->/<700,0>[\mathbb{L}(U, f)`T(\mathbb{L}(U, f));T]
 \efig
\]
where both $F$ and
$F_{\downarrow}$ are definable bijections. Moreover, $F$ is given by
\[
(\lbar a, b, \rv(b), t, \lbar s) \efun (\lbar a, b, \rv(b)/t, \lbar s),
\]
that is, $F$ is unary relative to all $\VF$-coordinates.
\end{lem}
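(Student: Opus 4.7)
The plan is to invert the direction of Lemma~\ref{special:bi:lh:1:equi:blowup}: rather than reading a blowup off of a given centripetal transformation, I will construct a centripetal transformation whose associated blowup is $(Y^{\sharp}, f^{\sharp})$. After reindexing so that the blown-up coordinate of $f(Y)$ is the first, so that $f_{|1}(\lbar t) \in \acl(f_{|>1}(\lbar t)) \mi \set{\infty}$ for every $\lbar t \in Y$, the task reduces to producing a definable focus map $\lambda : \pr_{>1} \mathbb{L}(Y, f) \fun \VF$ satisfying $\rv(\lambda(\lbar a_{>1}, \lbar t)) = f_{|1}(\lbar t)$. Taking $\eta$ to be the centripetal transformation with focus $\lambda$ and locus $\mathbb{L}(Y, f)$ and setting $T = \can \circ \eta$ then drops us into the hypotheses of Lemma~\ref{special:bi:lh:1:equi:blowup}, which supplies the isomorphism $F_{\downarrow} : (Z, \pr_{\leq k}) \fun (Y^{\sharp}, f^{\sharp})$ (uniqueness of the elementary blowup with respect to a fixed coordinate making this identification precise).

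To build $\lambda$ I will use the $(\VF, \Gamma)$-generated assumption via Lemma~\ref{exists:gamma:polynomial} in tandem with the multivariate Hensel Lemma~\ref{hensel:lemma}. Applying Lemma~\ref{exists:gamma:polynomial} to $(f_{|>1}(\lbar t), f_{|1}(\lbar t))$ (with $f_{|1}(\lbar t)$ in the role of $t_n$) and uniformizing by compactness over a definable partition of $Y$ yields a $\lbar \gamma$-polynomial $p(\lbar x_{<k}, x_k)$ with coefficients in $\VF(\seq{\0})$ such that $(f_{|>1}(\lbar t), f_{|1}(\lbar t))$ is a residue root of $p$ but not of $\partial p/\partial x_k$. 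One-variable Hensel on $P(x_k) := p(\lbar a_{>1}, x_k)$ does not apply directly, because simplicity of the residue root at the specific $\lbar a_{>1}$ can fail; I circumvent this by augmenting $p$ with the trivial $\lbar \gamma$-polynomials $q_i(\lbar x) := x_i - a_i$ for $i < k$, where $a_i$ is the $i$-th coordinate of $\lbar a_{>1}$. Each $q_i$ is a $\lbar \gamma$-polynomial of residue value $\gamma_i$ with $\lbar t$ as a residue root, and a quick cofactor expansion shows that the Jacobian of the augmented system $\set{p, q_1, \ldots, q_{k-1}}$ collapses to $\pm \partial p/\partial x_k$, hence is not a residue root at $\lbar t$. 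Lemma~\ref{hensel:lemma} then produces a unique $\lbar b = (\lbar a_{>1}, b_k) \in \rv^{-1}(f_{|>1}(\lbar t), f_{|1}(\lbar t))$ solving the system, and I set $\lambda(\lbar a_{>1}, \lbar t) := b_k$; this is uniformly definable as $\lbar t$ ranges over $Y$.

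With $T$ and $F_{\downarrow}$ in hand, the bijection $F : T(\mathbb{L}(Y, f)) \fun \mathbb{L}(Y^{\sharp}, f^{\sharp})$ is given explicitly by
\[
(c, \lbar a_{>1}, \rv(c), \rv(\lbar a_{>1}), \lbar t) \efun (c, \lbar a_{>1}, \lbar t, \rv(c)/f_{|1}(\lbar t)),
\]
using the convention $\infty/f_{|1}(\lbar t) := \infty$ for the central case $c = 0$; bijectivity, definability, and commutativity of the square (with the two $\prv$ arrows and with $F_{\downarrow}$) are visible directly from the formula. I expect the main technical hurdle to be the augmentation step in the second paragraph: without the auxiliary polynomials $q_i$, Hensel's Lemma would guarantee a root of $P(x_k)$ only on a definable proper subfamily of $\pr_{>1} \mathbb{L}(Y, f)$, which would obstruct the construction of a single length-$1$ special bijection realizing the blowup on the nose.
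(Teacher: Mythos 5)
Your proposal is correct and follows the paper's proof in all essentials: reduce by compactness to a single polyball $\rv^{-1}(\lbar t)$ with $\lbar t \in f(Y)$, invoke Lemma~\ref{exists:gamma:polynomial} (this is exactly where $(\VF,\Gamma)$-generation is used) to get a $\lbar\gamma$-polynomial $p$ of which $\lbar t$ is a residue root but not a residue root of the relevant partial derivative, use Hensel to pick out a canonical root in each fiber as the focus map, and then write down $F$ by the very formula the paper uses (your appeal to Lemma~\ref{special:bi:lh:1:equi:blowup} for $F_{\downarrow}$ is also fine, modulo the suppressed coordinate index). The one divergence is the Hensel step: the paper fixes $\lbar a_{>1} \in \rv^{-1}(f_{|>1}(\lbar t))$ and applies the one-variable case of Lemma~\ref{hensel:lemma} to the specialization $p(\lbar a_{>1}, x_k)$, whereas you augment with $q_i = x_i - a_i$ and apply the multivariate lemma; your trick is legitimate (each $q_i$ is a $\lbar\gamma$-polynomial of residue value $\gamma_i$ with $\lbar t$ a residue root, since $\rv(b_i)=\rv(a_i)$ forces $\vv(b_i-a_i)>\gamma_i$, and the Jacobian collapses to $\pm\partial p/\partial x_k$ as you say), but your stated motivation for it is mistaken. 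Simplicity cannot fail at a specific $\lbar a_{>1}$: for any $\lbar\gamma$-polynomial $g$ with residue value $\beta$, every monomial evaluated at a point of $\rv^{-1}(\lbar t)$ has value exactly $\beta$ and its $\rv$ is determined by $\lbar t$ alone, so whether $\vv g(\lbar b) > \beta$ holds is a property of $\lbar t$, not of the particular point; hence ``$\lbar t$ is not a residue root of $\partial p/\partial x_k$'' already yields $\vv(\partial p/\partial x_k)(\lbar a_{>1}, b_k) = \alpha - \gamma_k$ for \emph{every} $(\lbar a_{>1}, b_k) \in \rv^{-1}(\lbar t)$. The only genuine (and harmless) wrinkle in the specialization route is that $p(\lbar a_{>1}, x_k)$ need not literally be a $\gamma_k$-polynomial, because a coefficient polynomial may itself have $f_{|>1}(\lbar t)$ as a residue root and thus acquire inflated value; your augmentation sidesteps this, but so does the paper's route after the standard rescaling (all scaled coefficients lie in $\OO$, the scaled value and derivative conditions hold, and classical Hensel over the henselian ring $\OO$ applies), and both produce the same unique root and hence the same definable focus map.
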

\begin{proof}
Suppose that $\lbar t = (\lbar t_k, t_k)$ is the only element in $f(U)$. For any centripetal transformation $\eta$ on
$\rv^{-1}(\lbar t)$ with respect to a focus map $\lambda$ on
$\rv^{-1}(\lbar t_k)$, the function
\[
F_{\lbar t} : (\can \circ \eta)(\rv^{-1}(\lbar t) \times U) \fun
\bb L(U^{\sharp}, f^{\sharp})
\]
given by
\[
(\can \circ \eta)(\lbar a_k, a_k, \lbar s) \efun (\lbar a_k, a_k -
\lambda(\lbar a_k), \lbar s, \rv(a_k - \lambda(\lbar a_k))/t_k )
\]
is a bijection as required. So, by compactness, it is enough to show that there is a $\lbar t$-definable focus map $\lambda$ such that $\rv^{-1}(\lbar t_k) \times U \sub \dom(\lambda)$. Let $\vrv(\lbar t) = (\lbar \gamma_n, \gamma_n) = \lbar \gamma$. Since $t_k \in \acl(\lbar t_k)$ and $t_k \neq \infty$, by Lemma~\ref{exists:gamma:polynomial}, there is a
$\lbar \gamma$-polynomial $P(\lbar X)$ with coefficients in $\VF(S)$ such that $\lbar t$ is a residue root of $P(\lbar X)$ but is not a residue root of $\partial
P(\lbar X) / \partial X_k$. This means that, for every $\lbar a_k \in \rv^{-1}(\lbar t_k)$, $t_k$ is a simple residue root of the $\gamma_n$-polynomial
$P(\lbar a_k, X_k)$ and hence, by Lemma~\ref{hensel:lemma}, there is a unique $a_k \in \rv^{-1}(t_k)$ such that $P(\lbar a_k, a_k) = 0$. So there exists
a focus map as desired.
\end{proof}



\begin{cor}\label{iterated:blowup:same:RV}
Suppose that the substructure $S$ is $(\VF, \Gamma)$-generated. Let $\mathbf{U}^{\sharp}$ be an iterated blowup of $\mathbf{U}$ of length $l$. Then $\bb L \mathbf{U}$ and $\bb L \mathbf{U}^{\sharp}$ are definably bijective.
\end{cor}
\begin{proof}
By induction this is immediately reduced to the case $l=1$, which follows from Lemma~\ref{elementary:blowup:same:VF} and Theorem~\ref{L:semigroup:hom}.
\end{proof}

\begin{lem}\label{isp:VF:fiberwise:contract:isp}
Let $A_1 \sub \VF^n \times \RV^{m_1}, A_2 \sub \VF^n \times \RV^{m_2}$ be two definable subsets such that $\pvf (A_1) = \pvf (A_2) = A$. Suppose that there is an $E \sub \mathds{N}$ such that, for every $\lbar a \in A$,
\[
([\fib(A_1, \lbar a)]_{E}, [\fib(A_2, \lbar a)]_{E}) \in \isp \la \lbar a \ra.
\]
Let $\wh T_{\sigma}$, $\wh R_{\sigma}$ be two standard contractions of $A_1$, $A_2$. Set $E' = E \cup I_n$. Then
\[
([\wh T_{\sigma}(A_1)]_{E'}, [\wh R_{\sigma}(A_2)]_{E'}) \in \isp.
\]
\end{lem}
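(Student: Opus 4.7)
The plan is to prove the lemma by induction on $n$, peeling off one $\VF$-coordinate at a time via the first relatively special bijection in each standard contraction. The base case $n=0$ is immediate: there are no $\VF$-coordinates to contract, $A$ is a singleton (the empty tuple), $X_i$ coincides with $\fib(X_i, \emptyset)$, and the standard contractions are trivial, so the conclusion reduces directly to the hypothesis.

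For the inductive step, I would take the first entries $I_{\sigma(1)}$ and $J_{\sigma(1)}$ of the two standard contractions (relatively special bijections on the $\sigma(1)$-th $\VF$-coordinate) and form $X_1^{(1)} = \wh{I}_{\sigma(1)}(X_1)$ and $X_2^{(1)} = \wh{J}_{\sigma(1)}(X_2)$, each a subset of $\VF^{n-1} \times \RV^{m_i+1}$. The remaining parts of $\wh{I_{\sigma}}$ and $\wh{J_{\sigma}}$ are standard contractions of $X_1^{(1)}$ and $X_2^{(1)}$ with respect to the permutation $(\sigma(2), \ldots, \sigma(n))$ of $\{1, \ldots, n\} \setminus \{\sigma(1)\}$. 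If I can verify the fiberwise $\isp$-hypothesis for $X_1^{(1)}, X_2^{(1)}$ (with respect to an enlarged index set $E \cup \{\text{new } \RV\text{-index}\}$), the inductive hypothesis applied to them completes the proof.

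The key sub-claim is thus: for each $\lbar a' \in \pvf X_i^{(1)} \subseteq \VF^{n-1}$, one has $([\fib(X_1^{(1)}, \lbar a')]_{E^*}, [\fib(X_2^{(1)}, \lbar a')]_{E^*}) \in \isp \langle \lbar a' \rangle$ for the appropriate $E^*$. Fixing $\lbar a'$, the fiber $\fib(X_i^{(1)}, \lbar a')$ is obtained from $\{(c, \lbar t) : (\lbar a', c, \lbar t) \in X_i\}$ by applying a special bijection (in the $\sigma(1)$-coordinate) followed by the canonical bijection and the $\prv$ projection. By Corollary~\ref{special:correspond:blowup} applied fiberwise over $\lbar a'$, this makes $(\fib(X_i^{(1)}, \lbar a'), \pr_{E^*})$ isomorphic to an iterated blowup of an $\RV$-object that packages the family $\{\fib(X_i, \lbar a', c) : c\}$. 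Since the original hypothesis gives $\fib(X_1, \lbar a', c) \sim_{\isp} \fib(X_2, \lbar a', c)$ for each $c$, compactness (to uniformize the fiberwise blowups into a single definable family, whose length is bounded across $\lbar a'$) together with the congruence properties of $\isp$ from Lemma~\ref{isp:congruence} yields an $\isp$-equivalence between these packaged $\RV$-objects. Finally, Lemma~\ref{isp:transitive} absorbs this $\isp$-equivalence into the iterated blowups coming from the special bijection, establishing the sub-claim.

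The main obstacle will be the bookkeeping of focus maps: $I_{\sigma(1)}$ and $J_{\sigma(1)}$ are chosen independently for $X_1$ and $X_2$, so the resulting iterated blowups on the two sides are of different shapes. This is precisely where Lemma~\ref{isp:transitive} is indispensable — it allows two independent towers of blowups of $\isp$-equivalent $\RV$-objects to be reconciled into a common pair of isomorphic iterated blowups. A secondary technical point is that the raw fiber $\{(c, \lbar t) : (\lbar a', c, \lbar t) \in X_i\}$ may not literally be the lift $\bb L$ of an $\RV$-object; one first applies Proposition~\ref{all:subsets:rvproduct} to recast it as such (at the cost of a preliminary special bijection, which only strengthens the subsequent appeal to Corollary~\ref{special:correspond:blowup}), and then uniformizes over $\lbar a'$ by compactness.
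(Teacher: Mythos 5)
Your induction scaffolding coincides with the paper's: the paper also disposes of the general case by the one-line remark that induction on $n$ reduces everything to $n=1$, exactly via the peeling you describe. The problem is that the entire content of the lemma sits in the $n=1$ case, and there your sketch has a genuine gap. The step ``compactness \ldots\ together with the congruence properties of $\isp$ \ldots\ yields an $\isp$-equivalence between these packaged $\RV$-objects'' is not a routine deduction; it is essentially the statement being proved. The blowups supplied by the hypothesis are only $(\lbar a', c)$-definable, i.e.\ they carry the $\VF$-point $c$ as a parameter. Compactness does give finitely many formulas $\phi_j$ uniformizing them, but an $\isp \la \lbar a' \ra$-equivalence of the contracted (packaged) objects requires witnessing blowups and an isomorphism definable from $\RV$-data alone, and the union over all $c$ in an $\rv$-ball of $c$-definable blowups of the fibers is not, as it stands, an iterated blowup of the packaged object. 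In addition, because you contract $X_1$ and $X_2$ (or their fibers) by independently chosen preliminary special bijections, the two packaged objects are fibered over unrelated images of the common base $A$, so there is no common polyball decomposition along which the fiberwise hypothesis can be matched and glued.

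The paper resolves exactly these two points by applying a single special bijection $T_A$ to the common base $A$ itself, not to $X_1$, $X_2$ separately: $T_A$ is chosen so that (i) both fiber families are constant on the preimage of each $\rv$-polyball of $T_A(A)$, (ii) over each polyball one formula defines the same witnessing blowups for every point of its preimage, and (iii) by Lemma~\ref{function:rv:to:vf:finite:image} and Lemma~\ref{acl:VF:transfer:acl:RV} these blowups are in fact $t$-definable, where $t$ is the polyball's $\RV$-coordinate. Only after this $\VF$-to-$\RV$ parameter transfer do the fiberwise equivalences glue, via Lemma~\ref{isp:congruence}, into $([\prv X_1^{\sharp}]_{E'}, [\prv X_2^{\sharp}]_{E'}) \in \isp$, where $X_i^{\sharp}$ is $X_i$ transported along $T_A$; the given contractions $\wh{I_{\sigma}}(X_1)$, $\wh{J_{\sigma}}(X_2)$ are then compared with these canonical ones fiberwise over $\lbar t \in \pr_E X_i$ by Corollary~\ref{kernel:dim:1} --- which, incidentally, is the right tool at the point where you invoke Corollary~\ref{special:correspond:blowup}, since after your preliminary application of Proposition~\ref{all:subsets:rvproduct} the set being contracted is no longer literally $\bb L$ of the packaged object --- and transitivity of $\isp$ finishes. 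Your proposal is missing the base-uniformization step and the trade of the $\VF$-parameter $c$ for $\RV$-data, and these are the actual work of the proof.
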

\begin{proof}
By induction on $n$ this is immediately reduced to the case $n=1$. So let us assume $A \sub \VF$. By an argument similar to the one in the proof of Lemma~\ref{bijection:made:contractible}, there is a special
bijection $T_A$ on $A$ such that the following items hold:
\begin{enumerate}
 \item $T_A(A) = A^{\sharp}$ is an $\RV$-pullback.


 \item Let $h_1 = T_A \circ (\pvf \rest A_1)$ and $h_2 = T_A \circ (\pvf \rest A_2)$. For any $\rv$-polydisc $\gp = \rv^{-1}(t) \times \set{(t, \lbar s)} \sub A^{\sharp}$,
     \[
     h_1^{-1}(\gp) = A_{\gp} \times U_{\gp,1}, \quad h_2^{-1}(\gp) = A_{\gp} \times U_{\gp,2},
     \]
      where $A_{\gp} \sub A$ and, for any $a \in A_{\gp}$, $U_{\gp,1} = \fib(A_1, a)$ and $U_{\gp,2} = \fib(A_2, a)$.

 \item There is a formula $\phi$ such that, for any $\gp = \rv^{-1}(t) \times \set{(t, \lbar s)} \sub A^{\sharp}$ and any $a \in A_{\gp}$, $\phi(a)$ defines the same iterated blowups that witness $([U_{\gp,1}]_E, [U_{\gp,2}]_E) \in \isp \la a \ra$. Clearly these iterated blowups are also $(t, \lbar s)$-definable. Therefore, $([U_{\gp,1}]_E, [U_{\gp,2}]_E) \in \isp \la t, \lbar s\ra$.
\end{enumerate}
Now let
\[
A_1^{\sharp} = \bigcup_{a \in A} (\set{T_A(a)} \times \fib(A_1, a)), \quad A_2^{\sharp} = \bigcup_{a \in A} (\set{T_A(a)} \times \fib(A_2, a)).
\]
Note that $T_A(\fib(A_1, \lbar t))$ is an $\RV$-pullback for every $\lbar t \in \prv(A_1)$, similarly for $A_2$. So $A_1^{\sharp}, A_2^{\sharp}$ may be obtained by special bijections on $A_1$, $A_2$.

By the second item above, for any $\lbar t \in \pr_E (A_1)$, $\fib(A_1^{\sharp}, \lbar t)$ is an $\RV$-pullback that is $\lbar t$-definably bijective to $\fib(A_1, \lbar t)$ and hence to the $\lbar t$-definable $\RV$-pullback $\fib(T_{\sigma}(A_1), \lbar t)$. By Corollary~\ref{kernel:dim:1}, we have
\[
([\prv\fib(A_1^{\sharp}, \lbar t)]_1, [\fib(\wh T_{\sigma}(A_1), \lbar t)]_1) \in \isp \la \lbar t \ra
\]
and hence, by compactness,
\[
([\prv (A_1^{\sharp})]_{E'}, [\wh T_{\sigma}(A_1)]_{E'}) \in \isp.
\]
Symmetrically we have
\[
([\prv (A_2^{\sharp})]_{E'}, [\wh R_{\sigma}(A_2)]_{E'}) \in \isp.
\]
On the other hand, for any $\rv$-polydisc $\gp = \rv^{-1}(t) \times \set{(t, \lbar s)} \sub A^{\sharp}$, we have $\gp \times U_{\gp,1} \sub A_1^{\sharp}$ and $\gp \times U_{\gp,2} \sub A_2^{\sharp}$, and hence, by the third item above,
\[
([\prv(\gp \times U_{\gp,1})]_E, [\prv(\gp \times U_{\gp,2})]_E) \in \isp \la t, \lbar s \ra.
\]
Observe that, by Lemma~\ref{function:rv:to:vf:finite:image}, $\lbar s \in \acl(a)$ for every $a \in \rv^{-1}(t)$ and hence, by Lemma~\ref{acl:VF:transfer:acl:RV}, $\lbar s \in \acl(t)$. So, by compactness, we deduce
\[
([\prv (A_1^{\sharp})]_{E'}, [\prv (A_2^{\sharp})]_{E'}) \in \isp.
\]
Since $\isp$ is a congruence relation, the lemma follows.
\end{proof}

\begin{cor}\label{contraction:same:perm:isp}
Let $A_1 \sub \VF^n \times \RV^{m_1}$ and $A_2 \sub \VF^n \times
\RV^{m_2}$ be two definable subsets and $f : A_1 \fun A_2$ a unary
bijection relative to the coordinate $i$. Then for any permutation
$\sigma$ of $I_n$ with $\sigma(1) = i$ and any standard contractions $\wh T_{\sigma}, \wh R_{\sigma}$ of $A_1, A_2$,
\[
([\wh T_{\sigma}(A_1)]_{\leq n}, [\wh R_{\sigma}(A_2)]_{\leq n}) \in \isp.
\]
\end{cor}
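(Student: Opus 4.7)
The plan is to reduce the statement to Lemma~\ref{isp:VF:fiberwise:contract:isp} by verifying the required $\isp$-equivalence of fibers over the non-$i$th $\VF$-coordinates, which becomes available once we peel off the very first piece of each standard contraction. The decisive observation is that the hypotheses cooperate: since $f$ is unary relative to coordinate $i$, it preserves the remaining $\VF$-coordinates, and since $\sigma(1) = i$, the first relatively special bijection in each of $\wh{I_\sigma}$ and $\wh{J_\sigma}$ acts on coordinate $i$ as well.

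Concretely, I would first let $E_i = \set{1, \ldots, n} \mi \set{i}$ and set $A = \pr_{E_i} X_1 = \pr_{E_i} X_2$; for each $\lbar a \in A$, the bijection $f$ restricts to an $\lbar a$-definable bijection $f_{\lbar a} : \fib(X_1, \lbar a) \fun \fib(X_2, \lbar a)$, both living in $\VF \times \RV^{m_j}$. Writing $I_i$ and $J_i$ for the first (coordinate-$i$) relatively special bijections used in $\wh{I_\sigma}$ and $\wh{J_\sigma}$, their restrictions to the fibers are $\lbar a$-definable special bijections whose images are the $\RV$-products $\bb L(\fib(\wh{I_i}(X_1), \lbar a), \pr_{\leq 1})$ and $\bb L(\fib(\wh{J_i}(X_2), \lbar a), \pr_{\leq 1})$ respectively. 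Conjugating $f_{\lbar a}$ by these lifts yields an $\lbar a$-definable bijection between the two $\bb L$-images, so Corollary~\ref{kernel:dim:1} applied in the expansion by $\lbar a$ gives
\[
([\fib(\wh{I_i}(X_1), \lbar a)]_{\leq 1}, [\fib(\wh{J_i}(X_2), \lbar a)]_{\leq 1}) \in \isp \la \lbar a \ra
\]
for every $\lbar a \in A$.

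With this fiberwise input in hand, I would invoke Lemma~\ref{isp:VF:fiberwise:contract:isp} on the two subsets $\wh{I_i}(X_1), \wh{J_i}(X_2) \sub \VF^{n-1} \times \RV^{m_j + 1}$, which share the $\VF$-projection $A$, taking $E = \set{1}$ (the $\RV$-index tagging the contracted $i$-th coordinate, per Notation~\ref{indexing}). The remaining compositions in $\wh{I_\sigma}$ and $\wh{J_\sigma}$ constitute standard contractions of $\wh{I_i}(X_1)$ and $\wh{J_i}(X_2)$ indexed by the permutation of $\set{1, \ldots, n-1}$ induced by $\sigma \rest \set{2, \ldots, n}$, so the conclusion of the lemma, with $E' = E \cup \set{1, \ldots, n-1} = \set{1, \ldots, n}$ after the standard re-indexing, is exactly the desired $([\wh{I_{\sigma}}(X_1)]_{\leq n}, [\wh{J_{\sigma}}(X_2)]_{\leq n}) \in \isp$.

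The only delicate point, and what I expect to be the main obstacle, is bookkeeping rather than mathematical content: one must be careful that the construction of $I_i$ and $J_i$ by compactness (as in the discussion preceding Definition~\ref{defn:standard:contraction}) indeed produces $\lbar a$-definable special bijections on each fiber, so that the conjugation $J_i \circ f_{\lbar a} \circ I_i^{-1}$ is $\lbar a$-definable and lands between the correct $\bb L$-images, and that the index $E = \set{1}$ chosen for Lemma~\ref{isp:VF:fiberwise:contract:isp} matches the indexing scheme of Notation~\ref{indexing} once the remaining relatively special bijections $I_{\sigma(2)}, \ldots, I_{\sigma(n)}$ (respectively $J_{\sigma(2)}, \ldots, J_{\sigma(n)}$) are applied.
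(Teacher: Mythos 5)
Your proposal is correct and takes essentially the same route as the paper's own proof: fiber over the $\VF$-coordinates in $\set{1,\ldots,n}\mi\set{i}$, apply Corollary~\ref{kernel:dim:1} over $\seq{\lbar a}$ to obtain the fiberwise $\isp \la \lbar a \ra$-equivalence of the one-variable contractions, and conclude with Lemma~\ref{isp:VF:fiberwise:contract:isp}. Your explicit peeling-off of the first relatively special bijections $I_i$, $J_i$ is just a more detailed account of how the paper intends that lemma to be invoked.
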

\begin{proof}
For any $\lbar a \in \pr_{\widetilde{i}} (A_1) = \pr_{\widetilde{i}} (A_2)$ and any $\lbar a$-definable standard contractions $\wh T$, $\wh R$ of $\fib(A_1, \lbar a)$, $\fib(A_2, \lbar a)$, by Corollary~\ref{kernel:dim:1}, we have
\[
([\wh T(\fib(A_1, \lbar a))]_1, [\wh R(\fib(A_2, \lbar a))]_1) \in
\isp \la \lbar a \ra.
\]
Then the corollary follows from
Lemma~\ref{isp:VF:fiberwise:contract:isp}.
\end{proof}

The following lemma is essentially a version of Fubini's theorem.

\begin{lem}\label{contraction:perm:pair:isp}
Let $A \sub \VF^n \times \RV^{m}$ be a definable subset. Let $i, j \in I_n$ be distinct and $\sigma_1$, $\sigma_2$ two permutations of $I_n$ such that
\[
\sigma_1(1) = \sigma_2(2) = i, \quad \sigma_1(2) = \sigma_2(1) = j, \quad \sigma_1
\rest \set{3, \ldots, n} = \sigma_2 \rest \set{3, \ldots, n}.
\]
Then, for any standard contractions $\wh T_{\sigma_1}$, $\wh T_{\sigma_2}$ of $A$,
\[
([\wh T_{\sigma_1}(A)]_{\leq n}, [\wh T_{\sigma_2}(A)]_{\leq n}) \in \isp.
\]
\end{lem}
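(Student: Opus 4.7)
Set $E = \{1, \ldots, n\} \mi \{i, j\}$. Each standard contraction $\wh{I_{\sigma_k}}$ for $k \in \{1,2\}$ naturally splits into two phases. Phase~1 consists of the first two relatively special bijections of $\wh{I_{\sigma_k}}$, contracting the coordinates $i, j$ in the order prescribed by $\sigma_k$; this yields a definable subset $Y_k \sub \VF^{E} \times \RV^{m+2}$ with $\pvf Y_1 = \pvf Y_2 = \pr_E X$. Phase~2 is a standard contraction $\wh{K_k}$ of $Y_k$ using the common tail permutation $(\sigma_1(3), \ldots, \sigma_1(n)) = (\sigma_2(3), \ldots, \sigma_2(n))$. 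By construction $\wh{I_{\sigma_k}}(X) = \wh{K_k}(Y_k)$.

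The plan is to first establish the fiberwise equivalence $[\fib(Y_1, \lbar a)] \isp \la \lbar a \ra [\fib(Y_2, \lbar a)]$ for every $\lbar a \in \pr_E X$, and then to invoke Lemma~\ref{isp:VF:fiberwise:contract:isp} for $Y_1, Y_2$ with the Phase~2 contractions $\wh{K_1}, \wh{K_2}$ (which share the same underlying permutation of $E$) to conclude $[\wh{I_{\sigma_1}}(X)]_{\leq n} \isp [\wh{I_{\sigma_2}}(X)]_{\leq n}$.

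For the fiberwise claim, fix $\lbar a \in \pr_E X$; the fiber $\fib(X, \lbar a) \sub \VF^2 \times \RV^m$ has exactly two $\VF$-coordinates, namely $i$ and $j$. Apply Corollary~\ref{subset:partitioned:2:unit:contracted} to obtain an $\lbar a$-definable bijection $f_{\lbar a} : \fib(X, \lbar a) \fun \VF^2 \times \RV^l$ that is unary relative to \emph{both} $\VF$-coordinates, together with $\lbar a$-definable standard contractions $\wh{M_{(i,j)}}$ and $\wh{N_{(j,i)}}$ of $f_{\lbar a}(\fib(X, \lbar a))$ whose images in $\RV[2, \cdot]$ are isomorphic, hence $\isp \la \lbar a \ra$-equivalent. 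Because $f_{\lbar a}$ is unary relative to coordinate $i$, Corollary~\ref{contraction:same:perm:isp}, applied with the permutation $(i, j)$, gives $[\fib(Y_1, \lbar a)] \isp \la \lbar a \ra [\wh{M_{(i,j)}}(f_{\lbar a}(\fib(X, \lbar a)))]$. Symmetrically, using that $f_{\lbar a}$ is also unary relative to coordinate $j$ and the permutation $(j, i)$, one obtains $[\fib(Y_2, \lbar a)] \isp \la \lbar a \ra [\wh{N_{(j,i)}}(f_{\lbar a}(\fib(X, \lbar a)))]$. Chaining these two equivalences with the isomorphism supplied by Corollary~\ref{subset:partitioned:2:unit:contracted}, and using transitivity of $\isp \la \lbar a \ra$, yields the desired fiberwise equivalence.

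The anticipated main obstacle is not conceptual but notational: one must carefully track how indices are re-shuffled as the $\VF$-coordinates indexed by $i, j$ are converted to $\RV$-coordinates during Phase~1, so that the index sets required by Corollary~\ref{contraction:same:perm:isp} and Lemma~\ref{isp:VF:fiberwise:contract:isp} match those of the statement. Once the bookkeeping is set up, the argument is essentially forced: the two-dimensional Fubini-type interchange of Corollary~\ref{subset:partitioned:2:unit:contracted} is propagated from fibers to the full object via the fiberwise lemma, and the congruence properties of $\isp$ close the argument.
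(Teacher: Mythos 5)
Your proposal is correct and takes essentially the same route as the paper: split off the first two (relatively special) steps, reduce to the two-$\VF$-coordinate fibers via Lemma~\ref{isp:VF:fiberwise:contract:isp} and compactness, and settle the fiber case by combining Corollary~\ref{subset:partitioned:2:unit:contracted} with Corollary~\ref{contraction:same:perm:isp} applied on each side of the bijection that is unary relative to both coordinates, finishing with transitivity of $\isp$. The only difference is expository, namely your explicit Phase~1/Phase~2 bookkeeping, which the paper compresses into the phrase ``by compactness and Lemma~\ref{isp:VF:fiberwise:contract:isp} it is enough to show'' the fiberwise claim.
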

\begin{proof}
Let $ij$, $ji$ denote the permutations of $\{i, j\}$. By compactness and
Lemma~\ref{isp:VF:fiberwise:contract:isp}, it is enough to show
that, for any $\lbar a \in \pr_{\widetilde{\{i, j\}}}(A)$ and any standard
contractions $\wh T_{ij}$, $\wh T_{ji}$ of $\fib(A, \lbar a)$,
\[
([\wh T_{ij}(\fib(A, \lbar a))]_{\leq 2}, [\wh T_{ji}(\fib(A, \lbar a))]_{\leq 2})
\in \isp \la \lbar a \ra.
\]
To that end, fix an $\lbar a \in \pr_{\widetilde{\{i, j\}}}(A)$ and let $B = \fib(A, \lbar a)$. By Corollary~\ref{subset:partitioned:2:unit:contracted}, there are a definable bijection $f : B \fun \VF^2 \times \RV^l$ that is unary relative to both coordinates and two standard contractions $\wh R_{ij}$, $\wh R_{ji}$ of $f(B)$ such that
\[
[\wh R_{ij}(f(B))]_{\leq 2} = [\wh R_{ji}(f(B))]_{\leq 2}
\]
in the corresponding $\RV$-category with respect to $\seq{\lbar a}$. So the desired property follows from Corollary~\ref{contraction:same:perm:isp}.
\end{proof}

If the substructure $S$ is $(\VF, \Gamma)$-generated then the
congruence relation $\isp$ is the congruence relation induced by
$\bb L$:

\begin{prop}\label{kernel:L}
Suppose that the substructure $S$ is $(\VF, \Gamma)$-generated. Let $\mathbf{U}, \mathbf{V} \in \RV[k,\cdot]$. Then
\[
[\bb L\mathbf{U}] = [\bb L \mathbf{V}] \quad \text{if and only if} \quad ([\mathbf{U}], [\mathbf{V}]) \in \isp.
\]
\end{prop}
\begin{proof}
For the ``only if'' direction, suppose that $F : \bb L \mathbf{U} \fun \bb L \mathbf{V}$ is a definable bijection. By Lemma~\ref{bijection:partitioned:unary}, there is a partition of $\bb L \mathbf{U}$ into definable subsets $A_1, \ldots, A_n$  such that each $F_i = F \rest A_i$ is a composition of relatively unary bijections. Applying Theorem~\ref{special:bi:polynomial:constant} as in Lemma~\ref{bijection:made:contractible}, we obtain special bijections
$T$, $R$ on $\bb L \mathbf{U}$, $\bb L \mathbf{V}$ such that $T(A_i)$, $(R \circ F)(A_i)$ are $\RV$-pullbacks for each $i$. By Corollary~\ref{special:correspond:blowup}, it is enough to show that there are standard contractions $\wh T_{\sigma}$, $\wh R_{\tau}$ of $T(A_i)$, $(R \circ F)(A_i)$ for each $i$ such that
\[
([(\wh T_{\sigma} \circ T)(A_i)]_{\leq k}, [(\wh R_{\tau} \circ R \circ F)(A_i)]_{\leq k}) \in \isp.
\]
To that end, first note that each $(R \circ F \circ T^{-1}) \rest T(A_i)$ is a composition of relatively unary bijections, say
\[
T(A_i) = B_1 \to^{G_1} B_2 \cdots B_l \to^{G_l} (R \circ F)(A_i) = B_{l+1}.
\]
For each $j \leq l - 1$ we may apply Corollary~\ref{contraction:same:perm:isp} to $B_j \to^{G_j} B_{j+1}$ and $B_{j+1} \to^{G_{j+1}} B_{j+2}$ in the obvious way to get two pairs of $\isp$-congruent contractions $([U_j]_{\leq k}, [U_{j+1}]_{\leq k})$ and $([U'_{j+1}]_{\leq k}, [U_{j+2}]_{\leq k})$ such that the two permutations of $I_k$ used on $B_{j+1}$, if distinct, are as in Lemma~\ref{contraction:perm:pair:isp}. Then, by Corollary~\ref{contraction:same:perm:isp} and Lemma~\ref{contraction:perm:pair:isp}, we have
\[
([U_{j+1}]_{\leq k}, [U'_{j+1}]_{\leq k}) \in \isp.
\]
This completes the ``only if'' direction.

The ``if'' direction follows from Corollary~\ref{iterated:blowup:same:RV} and
Theorem~\ref{L:semigroup:hom}.
\end{proof}

\subsection{Invariants of definable bijections}\label{subsection:iso:inv}

In this subsection we shall assume that the substructure $S$ is $(\VF, \Gamma)$-generated.

As above, the results will be stated for the more general categories $\RV[k,\cdot]$, $\RV[*,\cdot]$, etc. By Remark~\ref{remark:blowup:restricted}, it is not hard to see that analogous results may be derived for the restricted categories $\RV[k]$, $\RV[*]$, etc.\ if the arguments are accordingly restricted.

\begin{prop}\label{main:prop:k}
For each $k \geq 0$ there is a canonical isomorphism of Grothendieck semigroups
\[
\Xint\infty_{+} : \gsk \VF[k,\cdot] \fun \gsk \RV[k,\cdot]/\isp
\]
such that
\[
\Xint\infty_{+} [A] = [\mathbf{U}]/\isp \quad \text{if and only if} \quad [A] = [\bb L \mathbf{U}].
\]
\end{prop}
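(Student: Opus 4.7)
The plan is to obtain $\Xint\infty_{+}$ as the inverse of the semigroup homomorphism induced by $\bb L$ on the quotient. By Corollary~\ref{L:semigroup:hom}, because $S$ is $(\VF,\Gamma)$-generated, $\bb L$ descends to a well-defined semigroup homomorphism
\[
\bb L : \gsk \RV[k,\cdot] \fun \gsk \VF[k,\cdot],\qquad [(U,f)] \efun [\bb L(U,f)].
\]
So the first step is simply to record that this map exists.

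Next I would show that $\bb L$ factors through the quotient $\gsk \RV[k,\cdot]/\isp$ and induces an \emph{injective} homomorphism
\[
\overline{\bb L} : \gsk \RV[k,\cdot]/\isp \fun \gsk \VF[k,\cdot].
\]
By Lemma~\ref{isp:congruence}, $\isp$ is a semigroup congruence relation on $\gsk \RV[k,\cdot]$, so the quotient makes sense as a semigroup. Proposition~\ref{kernel:L} says that, for objects $(X,g),(Y,f) \in \ob \RV[k,\cdot]$, we have $[\bb L(X,g)] = [\bb L(Y,f)]$ if and only if $([(X,g)],[(Y,f)]) \in \isp$. This is exactly the statement that $\isp$ is the kernel congruence of $\bb L$, so $\bb L$ factors as claimed and the factorization $\overline{\bb L}$ is injective.

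For surjectivity, I would invoke Corollary~\ref{L:surjective}, which states that $\bb L : \ob \RV[k,\cdot] \fun \ob \VF[k,\cdot]$ is surjective on isomorphism classes. Since every element of $\gsk \VF[k,\cdot]$ is represented by (the isomorphism class of) an object of $\VF[k,\cdot]$, this immediately gives surjectivity of $\overline{\bb L}$. Hence $\overline{\bb L}$ is an isomorphism of Grothendieck semigroups, and I define
\[
\Xint\infty_{+} = \overline{\bb L}^{-1}.
\]
The characterizing property of the statement, namely $\Xint\infty_{+}[X] = [(U,f)]/\isp$ iff $[X] = [\bb L(U,f)]$, is then just the defining property of the inverse map and simultaneously establishes the canonicity (independence of the choice of $(U,f)$ within its $\isp$-class). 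No genuine new work is required here; the point is that all three prior results (Corollary~\ref{L:semigroup:hom}, Proposition~\ref{kernel:L}, and Corollary~\ref{L:surjective}) have been set up precisely so that this proposition follows by the standard factorization through the kernel congruence, so there is no real obstacle to overcome at this stage.
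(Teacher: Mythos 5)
Your proposal is correct and is essentially the paper's own argument: the paper likewise combines Corollary~\ref{L:semigroup:hom} (the induced homomorphism $\bb L$), Corollary~\ref{L:surjective} (surjectivity on isomorphism classes), and Proposition~\ref{kernel:L} (the induced congruence is exactly $\isp$), and then takes the canonical isomorphism of the quotient. Your explicit factorization through the kernel congruence and definition of $\Xint\infty_{+}$ as $\overline{\bb L}^{-1}$ is just a slightly more spelled-out version of the same reasoning.
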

\begin{proof}
By Theorem~\ref{L:semigroup:hom}, $\bb L$ induces a canonical semigroup homomorphism
\[
\bb L : \gsk \RV[k,\cdot] \fun \gsk \VF[k, \cdot].
\]
By Theorem~\ref{L:surjective}, $\bb L$ is surjective. By
Proposition~\ref{kernel:L}, the semigroup congruence relation
induced by $\bb L$ is precisely $\isp$ and hence $\gsk
\RV[k,\cdot]/\isp$ is canonically isomorphic to $\gsk
\VF[k,\cdot]$.
\end{proof}

For each $k > 0$ let $\gsk \RV^{\times}[k,\cdot]$ be the sub-semigroup of $\gsk \RV[k,\cdot]$ that contains $[\0]_k$ and those elements $[(U,f)]$ with $f(U) \sub (\RV^{\times})^k$. For $k =0$ let $\gsk \RV^{\times}[0,\cdot] = \gsk \RV[0,\cdot]$. We have the direct sums:
\[
\gsk \RV^{\times}[\leq k,\cdot] =  \bigoplus_{i \leq k} \gsk
\RV^{\times}[i,\cdot] \sub  \bigoplus_{0 \leq k}
\gsk \RV^{\times}[k,\cdot]  = \gsk \RV^{\times}[*,\cdot].
\]
Recall the maps $\bb E_{i, j}$ from~\cite[Definition~4.17]{Yin:special:trans}. For each $k \geq 0$, let $\bb F_k$ be the obviously surjective semigroup homomorphism
\[
\bigoplus_{i \leq k} \bb E_{i, k} : \gsk \RV^{\times}[\leq k,\cdot] \fun \gsk \RV[k,\cdot].
\]
It is also clear from the condition on weight in~\cite[Definition~4.16]{Yin:special:trans} that $\bb F_k$ is injective as well. For every $k \geq 0$ we have a commutative diagram:
\[
\bfig
  \Square(0,0)/^{ (}->`->`->`->/<400>[\gsk \RV^{\times}{[\leq k, \cdot]}
  `\gsk \RV^{\times}{[\leq k + 1, \cdot]}`\gsk \RV{[k,\cdot]}`\gsk \RV{[k+1,\cdot]};
   `\bb F_k`\bb F_{k+1}`\bb E_{k}]
 \efig
\]
Let $\ispt[\leq k, \cdot]$ be the semigroup congruence relation on $\gsk \RV^{\times}[\leq k,\cdot]$ induced by $\bb F_k$ and $\isp$. It is easy to see that $\ispt[\leq k, \cdot]$ is the restriction of $\ispt[\leq k+1, \cdot]$ to $\gsk \RV^{\times}[\leq k,\cdot]$. So
\[
\ispt[*,\cdot] = \bigcup_{0 \leq k} \ispt[\leq k, \cdot]
\]
is a semiring congruence relation on $\gsk \RV^{\times}[*,\cdot]$. As above, for notational ease, all these congruence relations shall be simply denoted by $\ispt$. For every $k \geq 0$, Proposition~\ref{main:prop:k} induces a commutative diagram:
\[
\bfig
  \Square(0,0)/^{ (}->`->`->`^{ (}->/<400>[\gsk \VF{[k,\cdot]}`\gsk \VF{[k+1,\cdot]}`\gsk \RV^{\times}{[\leq k,\cdot]}/\ispt`\gsk \RV^{\times}{[\leq k+1,\cdot]}/\ispt;`\int_{+}`\int_{+}`]
 \efig
\]
Putting these together we obtain:

\begin{thm}\label{theorem:semiring}
There is a canonical isomorphism of Grothendieck semirings
\[
\int_{+} : \gsk \VF_*[\cdot] \fun \gsk \RV^{\times}[*,\cdot]/\ispt
\]
such that
\[
\int_{+} [A] = [\mathbf{U}]/\ispt \quad \text{if and only if} \quad [A] = [\bb L \mathbf{U}].
\]
\end{thm}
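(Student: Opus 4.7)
The plan is to assemble the theorem directly from the commutative diagrams displayed in the paragraphs preceding it, together with a verification that the resulting bijection respects multiplication. First, for each fixed $k$, Proposition~\ref{main:prop:k} already supplies a canonical semigroup isomorphism $\gsk\VF[k,\cdot] \cong \gsk\RV[k,\cdot]/\isp$. I would then note that the map
\[
\bb F_k : \gsk\RV^{\times}[\leq k,\cdot] \lra \gsk\RV[k,\cdot]
\]
is a semigroup isomorphism: surjectivity follows from the definition of $\bb L_{\leq k}$ together with Corollary~\ref{L:surjective}, while injectivity is forced by the weight condition in Definition~\ref{defn:RV:cat}, which prevents any morphism in $\RV$-categories from collapsing different $\bb E_{i,k}$-strata. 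Composing yields a semigroup isomorphism $\gsk\VF[k,\cdot] \cong \gsk\RV^{\times}[\leq k,\cdot]/\ispt$ for every $k$.

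Second, I would stitch these level-$k$ isomorphisms into a single map by taking unions. The diagram displayed just before the theorem shows that the inclusions $\gsk\VF[k,\cdot]\hookrightarrow\gsk\VF[k+1,\cdot]$ and $\gsk\RV^{\times}[\leq k,\cdot]\hookrightarrow\gsk\RV^{\times}[\leq k+1,\cdot]$ are intertwined, and by construction $\ispt[\leq k,\cdot]$ is the restriction of $\ispt[\leq k+1,\cdot]$. Passing to the union produces a well-defined semigroup isomorphism
\[
\int_{+} : \gsk\VF_{*}[\cdot] \lra \gsk\RV^{\times}[*,\cdot]/\ispt,
\]
characterized exactly by $\int_{+}[X] = [(U,f)]/\ispt$ iff $[X] = [\bb L(U,f)]$.

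The real content, and the main obstacle, is verifying that $\int_{+}$ is a \emph{semiring} homomorphism, since up to this point everything has been set up at the semigroup level. Given representatives $X_i = \bb L(U_i,f_i)$ with $(U_i,f_i)\in\ob\RV^{\times}[k_i,\cdot]$, the definition of $\bb L$ yields a canonical definable bijection
\[
\bb L(U_1,f_1)\times\bb L(U_2,f_2) \;\cong\; \bb L\bigl((U_1\times U_2,\, f_1\times f_2)\bigr),
\]
where the right-hand pair lies in $\ob\RV^{\times}[k_1+k_2,\cdot]$; this gives multiplicativity on representatives. For the identity element, the adjustment built into the $\boxtimes$-multiplication in $\gsk\RV[*,\cdot]$ is precisely what is needed so that $[(\{\infty\},\id)] \in \gsk\RV^{\times}[0,\cdot]/\ispt$ corresponds under $\bb L$ to the class of a $\VF$-singleton, i.e.\ the multiplicative identity of $\gsk\VF_{*}[\cdot]$. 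Finally, one checks that $\ispt$ is respected by multiplication (already recorded in Lemma~\ref{isp:congruence}) so that the multiplicative structure descends to the quotient.

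The hardest point to get right, in my experience, is keeping the bookkeeping of weights and of the padding functors $\bb E_{i,k}$ consistent with products across different levels of the filtration, because a cartesian product of two lifts naively lands in a larger $\VF^{n}\times\RV^{m}$ than the separate lifts would suggest. The restriction to $\gsk\RV^{\times}[*,\cdot]$, where every coordinate has finite weight, is exactly the device that eliminates this ambiguity, so once the semigroup isomorphism has been constructed the multiplicativity check reduces to the canonical identification of $\bb L$ with cartesian products, and the theorem drops out.
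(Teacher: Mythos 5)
Your proposal is correct and follows essentially the same route as the paper: Proposition~\ref{main:prop:k} at each level $k$, identification of $\gsk \RV[k,\cdot]$ with $\gsk \RV^{\times}[\leq k,\cdot]$ via $\bb F_k$, and passage to the union along the compatible inclusions, with the semiring structure descending because $\ispt$ is a semiring congruence and $\bb L$ is compatible with cartesian products (a check the paper leaves implicit). One small correction: surjectivity of $\bb F_k$ is a purely $\RV$-side fact (partition an object according to which coordinates of $f$ take the value $\infty$) and does not require Corollary~\ref{L:surjective}.
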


\begin{rem}\label{remark:fn:semimodule}
Let us write $\gsk \VF_*[\cdot]$, $\gsk \RV^{\times}[*,\cdot]$ simply as $\gsk \VF$, $\gsk \RV$ respectively. Let $A \sub \VF^n$ and $f : A \fun \mdl P(\RV^m)$ a definable function. Each $f(\lbar a)$ may be treated as an object in $\RV^{\times}[*,\cdot]$ with respect to some coordinate projection and hence $f$ is a representative of an equivalence class of definable functions. Such an equivalence class is understood as a \emph{definable function}
\[
A \fun \gsk \RV /\ispt \quad \text{ given by } \quad \lbar a \efun [f(\lbar a)]/\ispt \la \lbar a \ra.
\]
which, for simplicity, is also denoted by $f$. This should not cause confusion in context. Note: here the target $\gsk \RV/\ispt$ is really a sequence of semirings that vary according to the substructures $\dcl(\lbar a)$. Only with this understanding are we able to accommodate the two values $f(\lbar a_1)$, $f(\lbar a_2)$ in one semiring, namely the semiring with respect to the substructure $\dcl(\lbar a_1, \lbar a_2)$. As usual, for simplicity, the extra parameters may not always be spelled out in context.

Let $\bb L f = \bigcup_{\lbar a \in A}(\{\lbar a\} \times \bb L(f(\lbar a)))$. We define $\int_{+A} f$ to be $\int_+ [f] = \int_+ [\bb L f]$, which, by Proposition~\ref{kernel:L} and compactness, does not depend on the choice of the representative $f$. Let $\fn(A, \gsk \RV/\ispt)$ be the set of all definable functions on $A$, which is a natural $\gsk \RV/\ispt$-semimodule. Then we have a homomorphism of $\gsk \RV/\ispt$-semimodules:
\[
\int_{+A} : \fn(A, \gsk \RV/\ispt) \fun \gsk \RV/\ispt.
\]
\end{rem}

Let $E \sub I_n$ be a nonempty subset. For any $ f \in \fn(A, \gsk \RV/\ispt)$ let $\pr_E  f$ be the function on $\pr_E(A)$ given by
\[
\lbar a \efun \int_{+\fib(A, \lbar a)} f  = \int_{+} [f \rest \fib(A, \lbar a)].
\]
By compactness, $\pr_E f \in \fn(\pr_E(A), \gsk \RV/\ispt)$. We also write
\[
\int_{+\lbar a \in \pr_E(A)} \int_{+ \fib(A, \lbar a)} f = \int_{+\pr_E(A)} \pr_E f.
\]

\begin{prop}
For any nonempty subsets $E_1, E_2 \sub I_n$,
\[
\int_{+\lbar a \in \pr_{E_1}(A)} \int_{+ \fib(A, \lbar a)} f = \int_{+\lbar a \in \pr_{E_2}(A)} \int_{+ \fib(A, \lbar a)} f.
\]
\end{prop}
\begin{proof}
Since both sides may be represented by a standard contraction of $f$, this is immediate by Proposition~\ref{kernel:L}.
\end{proof}

In the groupification $\ggk \RV / \bf I$ of $\gsk \RV /\ispt$, $\ispt$ determines uniquely an ideal $\bf I$. We shall give a simple algebraic description of this ideal as follows. First observe that
\[
([1]_1, [1]_0 \oplus [(\RV^{\times})^{>1}]_1) \in \ispt,
\]
where $(\RV^{\times})^{>1} = \RV^{>1} \mi \{\infty\}$. Let $[(U, f)] \in \gsk \RV^{\times}[k,\cdot]$ and $(U^{\sharp}, f^{\sharp})$ an elementary blowup of $(U, f)$. Let
\[
U' = U \times \{\infty\}, \quad U'' = U^{\sharp} \mi U', \quad f' = \bb E_{k-1}^{-1}(f^{\sharp} \rest U'), \quad f'' = f^{\sharp} \rest U''.
\]
We clearly have
\[
[(U', f')] \times  [1]_1 = [(U, f)], \quad
[(U', f')] \times [(\RV^{\times})^{>1}]_1 = [(U'', f'')].
\]
Hence
\begin{align*}
[(U', f')] \oplus [(U'', f'')] &= [(U', f')] \oplus ([(U', f')] \times [(\RV^{\times})^{>1}]_1)\\
              &= [(U', f')] \times ([1]_0 \oplus [(\RV^{\times})^{>1}]_1)\\
              &=_{\ispt} [(U', f')] \times [1]_1\\
              &= [(U, f)].
\end{align*}
This shows that, as a semiring congruence relation on $\gsk \RV$, $\ispt$ is generated by $([1]_1, [1]_0
\oplus [(\RV^{\times})^{>1}]_1)$ and hence its corresponding ideal
$\bf I$ in the graded ring $\ggk \RV$ is
generated by the element $[1]_0 \oplus ([(\RV^{\times})^{>1}]_1 -
[1]_1)$. Let
\[
\mathbf{j} = [1]_1 - [(\RV^{\times})^{>1}]_1.
\]
We now compute in $\ggk \RV$:
\[
[\mathbf{U}] = [\mathbf{U}] \times [1]_0  =_{\bf I} [\mathbf{U}] \times [1]_1 - [\mathbf{U}] \times [(\RV^{\times})^{>1}]_1 = [\mathbf{U}] \times \mathbf{j}.
\]
Iterating this computation we see that
\[
\ggk \RV/\mathbf I \cong \colim{k} \ggk \RV^{\times}[k,\cdot],
\]
where the maps of the colimit system are given by $[\mathbf{U}] \efun [\mathbf{U}] \times \mathbf j$. Consequently, the groupification of the isomorphism $\int_{+}$ may be understood as a ring isomorphism
\[
\int : \ggk \VF \fun \colim{k} \ggk \RV^{\times}[k,\cdot].
\]
Since this colimit may be embedded into the zeroth graded piece of the graded ring $\ggk \RV[\mathbf{j}^{-1}]$ via the map determined by
\[
[\mathbf{U}] \efun [\mathbf{U}] \times \mathbf{j}^{-k}
\]
for $[\mathbf{U}] \in \ggk \RV^{\times}[k,\cdot]$, we have the following:

\begin{thm}\label{theorem:integration:1}
The Grothendieck semiring isomorphism $\int_{+}$ induces canonically an injective ring homomorphism
\[
\int : \ggk \VF \fun \ggk \RV[\mathbf{j}^{-1}].
\]
\end{thm}

\section{The kernel of $\mathbb{L}$ with volume forms and integration}\label{section:integrals}

Throughout this section we shall assume that the substructure $S$ is $(\VF, \Gamma)$-generated. We shall only discuss the categories $\mRV[k]$ and $\mRV[*]$. However, it is not hard to see that the final results also hold for other relevant categories.

Let $\mathbf{U} = (U,f,\omega) \in \mRV[n]$ and $\mathbf{V} = (V, g,\pi) \in \mRV[m]$. Let $\omega \times \pi : U \times V \fun \RV^{\times}$ be the function given by $(\lbar u, \lbar v) \efun \omega(\lbar u) \pi(\lbar v)$. The cartesian product $\mathbf{U} \times \mathbf{V} \in \mRV[n+m]$ is the object $((U,f) \times (V, g), \omega \times \pi)$. This makes $\gsk \mRV[*]$ into a graded semiring. Note that if $n=0$ or $m=0$ then we need to make some obvious adjustment on $\omega \times \pi$. Multiplication in $\gsk \mVF_*$ is defined analogously.

\begin{defn}\label{defn:blowup:vol}
An \emph{elementary blowup} of an object $(\mathbf{U}, \omega) \in \mRV[k]$ is an object $(\mathbf{U}, \omega)^{\sharp} = (\mathbf{U}^{\sharp}, \omega^{\sharp}) \in \mRV[k]$ such that $\mathbf{U}^{\sharp}$ is an elementary $\RV[k]$-blowup of $\mathbf{U}$ and $\omega^{\sharp}(\lbar t, s) = \omega(\lbar t)$.

Now the other notions in Definition~\ref{defn:blowup} formulated for $\RV[k]$ may be similarly formulated for $\mRV[k]$.
\end{defn}

\begin{lem}\label{elementary:blowups:preserves:iso:vol}
Let $(U, f, \omega), (V, g, \pi) \in \mRV[k]$ and $(U, f, \omega)^{\sharp}$, $(V, g, \pi)^{\sharp}$ two elementary blowups. If $[(U, f, \omega)] = [(V, g, \pi)]$ then $[(U, f, \omega)^{\sharp}] = [(V, g, \pi)^{\sharp}]$.
\end{lem}
\begin{proof}
The argument here is very similar to the one in the proof of~\cite[Theorem~9.16]{Yin:special:trans}. Let $F : (U, f, \omega) \fun (V, g, \pi)$ be an isomorphism. Without loss of generality, we may assume that $\dom(F) = U$, the dimensions of $U$, $V$ are $k -1$, and $\pr_{< k} \rest f(U)$, $\pr_{< k} \rest g(V)$ are finite-to-one. Then $F$ is also an isomorphism between $(U, \pr_{< k} \circ f)$ and $(V, \pr_{< k} \circ g)$ and hence, for almost all $\lbar u \in U$, $\jcb_{\RV} F ((\pr_{< k} \circ f)(\lbar u), \lbar u)$ is defined.

For each $\lbar u \in U$ with $\pr_k (f(\lbar u)) = r$ and $(\pr_k \circ g \circ F)(\lbar u) = s$, let $F_{\lbar u} : \RV^{>1} \fun \RV^{>1}$ be the $\lbar u$-definable function given by
\[
t \efun \bigl (\jcb_{\RV} F ((\pr_{< k} \circ f)(\lbar u), \lbar u) \bigr)^{-1} \cdot \frac{\omega(\lbar u)}{\pi(F(\lbar u))} \cdot \frac{r}{s} \cdot t
\]
if the right-hand side is defined; otherwise set $F_{\lbar u} = \id$. Note that $F_{\lbar u}$ is always a bijection. Let $F^* : (U, f)^{\sharp} \fun (V, g)^{\sharp}$ be the $\RV[k]$-isomorphism given by $(\lbar u, t) \efun (F(\lbar u), t)$ (see Lemma~\ref{elementary:blowups:preserves:iso}). Observe that, for almost all $(\lbar u, t) \in U^{\sharp}$, the $k \times k$ matrix used to produce $\jcb_{\RV} F^* (f^{\sharp}(\lbar u, t), \lbar u, t)$ is of the form $\bigl(\begin{smallmatrix}
\lambda_{\lbar u}& 0 \\ 0 & 1
\end{smallmatrix}\bigr)$, where $\lambda_{\lbar u}$ is the $(k-1) \times (k-1)$ matrix used to produce $\jcb_{\RV} F ((\pr_{< k} \circ f)(\lbar u), \lbar u)$. So an easy computation shows that the function $F^{\sharp} : U^{\sharp} \fun V^{\sharp}$ given by
\[
(\lbar u, t) \efun (\lbar u, F_{\lbar u}(t)) \efun (F(\lbar u), F_{\lbar u}(t)).
\]
is a $\mRV[k]$-isomorphism.
\end{proof}

\begin{defn}
Let $\misp[k]$ be the subclass of $\ob \mRV[k] \times \ob \mRV[k]$ of those pairs $(\mathbf{U}, \mathbf{V})$ such that there exist isomorphic iterated blowups $\mathbf{U}^{\sharp}$, $\mathbf{V}^{\sharp}$. Let $\misp[*] = \coprod_{0 \leq k} \misp[k]$.
\end{defn}

Since Corollary~\ref{eblowup:same:loci:iso} and Lemma~\ref{isp:transitive} only formally depend on Lemma~\ref{elementary:blowups:preserves:iso}, we can derive their analogs here from Lemma~\ref{elementary:blowups:preserves:iso:vol} and hence the following:

\begin{lem}\label{isp:congruence:vol}
$\misp[k]$ is a semigroup congruence relation and $\misp[*]$ is a semiring congruence relation.
\end{lem}


Let $(A, \omega) \in \mVF[k]$ and $A_{\omega} = \{(\lbar a, \omega(\lbar a)) : \lbar a \in A) \}$. For simplicity the volume form on $A_{\omega}$ that is naturally induced by $\omega$ is still denoted by $\omega$. Clearly $(A, \omega)$ and $(A_{\omega}, \omega)$ are isomorphic. If $\wh T_{\sigma}$ is a standard contraction of $A_{\omega}$ then $\omega$ naturally induces a volume form $\omega_{\wh T_{\sigma}}$ on $(\wh T_{\sigma}(A_{\omega}), \pr_{\leq k})$. The function $\wh T_{\sigma}$ --- or the object $(\wh T_{\sigma}(A_{\omega}), \pr_{\leq k}, \omega_{\wh T_{\sigma}}) \in \mRV[k]$, which is completely determined by $\wh T_{\sigma}$ --- is understood as a \emph{standard contraction} of $(A, \omega)$.

Now it is not hard to reproduce the other results in Section~\ref{section:kernel} for the current context. For that, Lemma~\ref{special:tran:vol:pre} and Lemma~\ref{jcb:chain} are constantly used. Occasionally we also need Corollary~\ref{lift:iso:mrv:iso} (in combination with Lemma~\ref{simul:special:dim:1}), for example, the analogs of Corollary~\ref{kernel:dim:1} and Lemma~\ref{isp:VF:fiberwise:contract:isp}.

It is rather straightforward to state and prove the analogs of the results from Lemma~\ref{special:bi:lh:1:equi:blowup} to Corollary~\ref{iterated:blowup:same:RV}. They are left to the reader.

\begin{lem}\label{isp:VF:fiberwise:contract:isp:vol}
Let $(A_1, \omega_1), (A_2, \omega_2) \in \mVF[k]$ such that $\pvf (A_1) = \pvf (A_2) = A$. Suppose that there is an $E \sub \mathds{N}$ such that, for every $\lbar a \in A$,
\[
([\fib(A_1, \lbar a), \omega_1]_{E}, [\fib(A_2, \lbar a), \omega_2]_{E}) \in \misp \la \lbar a \ra.
\]
Let $\wh T_{\sigma}$, $\wh R_{\sigma}$ be two standard contractions of $(A_1, \omega_1)$, $(A_2, \omega_2)$. Set $E' = E \cup I_k$. Then
\[
([\wh T_{\sigma}(A_1), \omega_{1,\wh T_{\sigma}}]_{E'}, [\wh R_{\sigma}(A_2), \omega_{2,\wh R_{\sigma}}]_{E'}) \in \misp.
\]
\end{lem}
\begin{proof}
We modify the special bijection $T_A$ in the proof of Lemma~\ref{isp:VF:fiberwise:contract:isp} so that for any $\rv$-polydisc $\gp \sub A^{\sharp}$ the volume forms on the fiber $U_{\gp, 1}$ are the same and similarly for $U_{\gp, 2}$. This implies in particular that the induced volume forms on $A_1^{\sharp}$, $A_2^{\sharp}$ are constant on $\rv$-polydiscs. Then the proof of Lemma~\ref{isp:VF:fiberwise:contract:isp} goes through here with virtually no changes (the computations involving $\jcb_{\RV}$ are all straightforward).
\end{proof}

Let $(A_1, \omega_1), (A_2, \omega_2) \in \mVF[k]$ and $f : A_1 \fun A_2$ a unary
bijection relative to the coordinate $i \in I_k$. An almost trivial computation shows that $f$ is measure-preserving if and only if $f \rest \fib(A_1, \lbar a)$ is measure-preserving for every $\lbar a \in \pr_{\widetilde i}(A_1)$.

\begin{cor}\label{contraction:same:perm:isp:vol}
Let $(A_1, \omega_1), (A_2, \omega_2) \in \mVF[k]$ and $f : A_1 \fun A_2$ a measure-preserving unary bijection relative to the coordinate $i \in I_k$. Then for any permutation $\sigma$ of $I_k$ with $\sigma(1) = i$ and any standard contractions $\wh T_{\sigma}$, $\wh R_{\sigma}$ of $(A_1, \omega_1)$, $(A_2, \omega_2)$,
\[
([\wh T_{\sigma}(A_1), \omega_{1,\wh T_{\sigma}}]_{\leq k}, [\wh R_{\sigma}(A_2), \omega_{2,\wh R_{\sigma}}]_{\leq k}) \in \misp.
\]
\end{cor}

\begin{lem}\label{contraction:perm:pair:isp:vol}
Let $(A, \omega) \in \mVF[k]$. Let $i, j \in I_k$ be distinct and $\sigma_1, \sigma_2$ two permutations of $I_k$ such that
\[
\sigma_1(1) = \sigma_2(2) = i, \quad \sigma_1(2) = \sigma_2(1) = j, \quad \sigma_1
\rest \set{3, \ldots, k} = \sigma_2 \rest \set{3, \ldots, k}.
\]
Then, for any standard contractions $\wh T_{\sigma_1}, \wh T_{\sigma_2}$ of $(A, \omega)$,
\[
([\wh T_{\sigma_1}(A), \omega_{\wh T_{\sigma_1}}]_{\leq k}, [\wh T_{\sigma_2}(A), \omega_{\wh T_{\sigma_2}}]_{\leq k}) \in \misp.
\]
\end{lem}
\begin{proof}
Let $B$ be as in the proof of Lemma~\ref{contraction:perm:pair:isp} and regard (the corresponding restriction of) $\omega$ as a volume form on $B$. Then, by Corollary~\ref{subset:partitioned:2:unit:contracted} and compactness, there is a definable bijection $f : B_{\omega} \fun \VF^2 \times \RV^l$, unary relative to both coordinates, such that $f(B_{\omega}) = B'$ is a $2$-cell. Clearly $f$ is measure-preserving with respect $\omega$ and the induced volume form $\pi$ on $B'$. So, by Corollary~\ref{contraction:same:perm:isp:vol} and Lemma~\ref{isp:VF:fiberwise:contract:isp:vol}, it is enough to find two standard contractions $\wh R_{ij}$, $\wh R_{ji}$ of $B'$ such that
\[
[\wh R_{ij}(B'), \pi_{\wh R_{ij}}]_{\leq 2} = [\wh R_{ji}(B'), \pi_{\wh R_{ji}}]_{\leq 2}.
\]
This would follow if we have a version of Corollary~\ref{subset:partitioned:2:unit:contracted} for objects in $\mVF[2]$, which would follow from a version of Lemma~\ref{2:unit:contracted} for $2$-cells with volume forms. To that end, let us assume $B' \sub \VF^2$. By inspection of the proof of Lemma~\ref{2:unit:contracted}, we see that it is enough to consider the following two cases: $B'$ is a product of two open balls or the positioning function $\epsilon$ is balanced in $B'$. The first case is obvious since we can simply use the identity map. In the second case, observe that there is no requirement on $\jcb_{\RV}$ since the resulting contractions are of dimension $1$. On the other hand, the requirement on $\jcb_{\Gamma}$ is satisfied by Remark~\ref{rem:rv:linear:bal}.
\end{proof}

Since the unary bijections in the proof of Lemma~\ref{bijection:partitioned:unary} are all given by standard contractions (additive translations), their Jacobians are constantly $1$ and hence they are measure-preserving with respect to the induced volume forms (from either side of the composition). Now we have reproduced for $\mVF[k]$, $\mRV[k]$ all the results that the proof of Proposition~\ref{kernel:L} formally depends on, so the following crucial description of the kernel of $\bb L$ may be obtained by the same proof:

\begin{prop}\label{kernel:L:vol}
$[\bb L \mathbf{U}] = [\bb L \mathbf{V}]$ if and only if $([\mathbf{U}], [\mathbf{V}]) \in \misp$.
\end{prop}

The derivations of the results from Proposition~\ref{main:prop:k} to Theorem~\ref{theorem:integration:1} are also of a formal nature, so at this point their analogs are immediate. 

\begin{prop}\label{main:prop:k:vol}
For each $k \geq 0$ there is a canonical isomorphism of Grothendieck semigroups
\[
\Xint\infty_{+} : \gsk \mVF[k] \fun \gsk \mRV[k]/\misp
\]
such that
\[
\Xint\infty_{+} [\mathbf{A}] = [\mathbf{U}]/\misp \quad \text{if and only if} \quad  [\mathbf{A}] = [\bb L \mathbf{U}].
\]
\end{prop}

Note that there is no need to weed out the element $\infty$ from $\mRV[k]$ and define a category $\mRV^{\times}[k]$ as in the last section, since $\infty$ is already ignored in $\mRV[k]$.

\begin{thm}\label{theorem:semiring:vol}
There is a canonical isomorphism of Grothendieck semirings
\[
\int_{+} : \gsk \mVF_* \fun \gsk \mRV[*]/\misp
\]
such that
\[
\int_{+} [\mathbf{A}] = [\mathbf{U}]/\misp \quad \text{if and only if} \quad  [\mathbf{A}] = [\bb L \mathbf{U}].
\]
\end{thm}

Let us write $\gsk \mVF_*$, $\gsk \mRV[*]$ simply as $\gsk \mVF$, $\gsk \mRV$ respectively. Let $A \sub \VF^n$ and $f : A \fun \mdl P(\RV^m)$ a definable function. Let $\omega$ be a volume form on $A_f = \bigcup_{\lbar a \in A}(\{\lbar a\} \times f(\lbar a))$. For each $\lbar a \in A$, $\omega_{\lbar a} = \omega \rest \fib(A_f, \lbar a)$ is understood as a volume form on $f(\lbar a)$ and hence $(f(\lbar a), \omega_{\lbar a})$ is an object in $\mRV[*]$. We shall write $(A_f, \omega)$ as $(f, \omega)$. As in Remark~\ref{remark:fn:semimodule}, $(f, \omega)$ is a representative of a \emph{definable function} $ A \fun \gsk \mRV/\misp$, which, for simplicity, is also denoted by $(f, \omega)$. The set of all such definable functions is denoted by $\fn(A, \gsk \mRV/\misp)$. Note that there is a subtle difference between the case without volume forms and the case with volume forms: $(f, \omega)$, $(g, \sigma)$ are the same function on $A$ if $([f(\lbar a)],[g(\lbar a)]) \in \misp \la \lbar a \ra$ for every $\lbar a \in A$ outside a definable subset of $A$ of $\VF$-dimension $< n$. Nevertheless, we may still define the integral of $(f,  \omega)$ by setting
\[
\int_{+A} ( f,  \omega) = \int_{+} [(f, \omega)] = \int_{+} [(\bb L f, \bb L \omega)],
\]
where $\bb L \omega$ is the volume form on $\bb L f$ naturally induced by $\omega$. By Proposition~\ref{kernel:L:vol} and compactness, this definition does not depend on the representative $(f, \omega)$. We have a homomorphism of $\gsk \mRV/\misp$-semimodules:
\[
\int_{+A} : \fn(A, \gsk \mRV/\misp) \fun \gsk \mRV/\misp.
\]

\begin{prop}\label{semi:fubini}
For any nonempty subsets $E_1, E_2 \sub I_n$,
\[
\int_{+\lbar a \in \pr_{E_1}(A)} \int_{+ \fib(A, \lbar a)} ( f,  \omega) = \int_{+\lbar a \in \pr_{E_2}(A)} \int_{+ \fib(A, \lbar a)} ( f,  \omega).
\]
\end{prop}

Let $B \sub \VF^n$ and assume that the dimensions of $A$, $B$ are $n$. Let $\phi : A \fun B$ be a definable bijection. Clearly the induced function
\[
\phi_* : \fn(A, \gsk \mRV/\misp) \fun \fn(B, \gsk \mRV/\misp)
\]
given by $( f,  \omega) \efun ( f \circ \phi^{-1}, \omega \circ \phi^{-1})$ is an isomorphism of $\gsk \mRV/\misp$-semimodules. However, this in general does not preserve integrals.
Let $\jcb_{\VF} \phi^{-1} \cdot \omega$ be the volume form on $B_{f \circ \phi^{-1}}$ given by
\[
(\lbar b, \lbar t) \efun \jcb_{\VF} \phi^{-1}(\lbar b) \cdot \omega_{\phi^{-1}(\lbar b)}(\lbar t)
\]
if this is defined; otherwise $(\lbar b, \lbar t) \efun 1$. The \emph{Jacobian transformation} of $( f,  \omega)$ with respect to $\phi$ is given by
\[
\phi^{\jcb}( f,  \omega) = ( f \circ \phi^{-1}, \jcb_{\VF} \phi^{-1} \cdot \omega),
\]
which does not depend on the representative $(f, \omega)$.

\begin{prop}\label{semi:change:variables}
$\int_{+A} ( f, \omega) = \int_{+B} \phi^{\jcb}( f,  \omega)$.
\end{prop}
\begin{proof}
Let $F : \bb L f \fun \bb L (f \circ \phi^{-1})$ be the bijection naturally induced by $\phi$. It is clear that, for almost all $(\lbar a, \lbar b, \lbar t) \in \bb L f$, $\jcb_{\VF} F(\lbar a, \lbar b, \lbar t) = \jcb_{\VF} \phi(\lbar a)$. Hence, by the definition of $\phi^{\jcb}$, we see that $(\bb L f, \bb L \omega)$, $(\bb L(f \circ \phi^{-1}), \bb L (\jcb_{\VF} \phi^{-1} \cdot \omega))$ are isomorphic.
\end{proof}

Let $\bf I$ be the corresponding ideal of the groupification $\ggk \mRV / \bf I$ of $\gsk \mRV /\misp$, which is determined by $\misp$ and is clearly homogeneous. Let $\mathbf{I}_k = \ggk \mRV[k] \cap \mathbf{I}$. Let $[1]_1 \in \gsk \mRV[1]$ be the isomorphism class of $(\{1\}, \id, 1)$ and $[\RV^{>1}]_1 \in \gsk \mRV[1]$ the isomorphism class of $(\RV^{>1}, \id, 1)$. Clearly $([1]_1, [\RV^{>1}]_1) \in \misp$. Let $[(U, f, \omega)] \in \gsk \mRV[k]$ and $(U, f, \omega)^{\sharp}$ an elementary blowup of $(U, f, \omega)$. Let
\[
U' = U \times \{\infty\}, \quad f' = \bb E_{k-1}^{-1}(f^{\sharp} \rest U'), \quad \omega' = \omega \cdot f_k.
\]
We have
\[
[(U', f', \omega')] \times  [1]_1 = [(U, f, \omega)], \quad
[(U', f', \omega')] \times [\RV^{>1}]_1 = [(U, f, \omega)^{\sharp}].
\]
This shows that, as a semiring congruence relation on $\gsk \mRV$, $\misp$ is generated by $([1]_1, [\RV^{>1}]_1)$ and hence its corresponding ideal
$\bf I$ in the graded ring $\ggk \mRV$ is homogeneous and is
generated by the element $[1]_1 - [\RV^{>1}]_1$.

\begin{thm}\label{theorem:integration:1:vol}
The Grothendieck semiring isomorphism $\int_{+}$ induces canonically a ring isomorphism
\[
\int : \ggk \mVF \fun \ggk \mRV / \mathbf{I} =\bigoplus_{k \geq 0} \ggk \mRV[k] / \mathbf{I}_k.
\]
\end{thm}

In applications it is often more desirable to work without filtration. One of the simplest ways to do this is as follows. Let $\mathbf{l} \in \ggk \mRV / \mathbf{I}$ be the element $[1]_1 + \mathbf{I}$ and write $(\ggk \mRV / \mathbf{I})[\mathbf{l}^{-1}]$ as $\KR$. For each $k \in \Z$ let $\KR_k$ be the $k$th graded piece of $\KR$. Then we have a canonical semiring homomorphism
\[
\gsk \mRV/\misp \to \ggk \mRV / \mathbf{I} \epi \KR_0.
\]
Note that the second arrow in this composition is a surjective ring homomorphism but is not injective. Set
\[
\fn(A, \KR_0) =  \fn(A, \gsk \mRV/\misp) \otimes_{\gsk \mRV/\misp} \KR_0.
\]
Then $\int_{+A}$ induces a canonical homomorphism of $\KR_0$-modules:
\[
\int_{A} : \fn(A, \KR_0) \fun \KR_0.
\]
By Proposition~\ref{semi:fubini} we immediately have

\begin{thm}[Fubini's theorem]\label{fubini}
For any $\mathbf{f} \in \fn(A, \KR_0)$ and any nonempty subsets $E_1, E_2 \sub I_n$,
\[
\int_{\lbar a \in \pr_{E_1}(A)} \int_{\fib(A, \lbar a)} \mathbf{f} = \int_{\lbar a \in \pr_{E_2}(A)} \int_{\fib(A, \lbar a)} \mathbf{f}.
\]
\end{thm}

Let $B \sub \VF^n$ and $\phi : A \fun B$ a definable bijection. The Jacobian transformation $\phi^{\jcb}$ from $\fn(A, \KR_0)$ to $\fn(B, \KR_0)$ is defined analogously. By Proposition~\ref{semi:change:variables} we immediately have

\begin{thm}[Change of variables]\label{change:variables}
For any $\mathbf{f} \in \fn(A, \KR_0)$,
\[
\int_{A} \mathbf{f} = \int_{B} \phi^{\jcb} (\mathbf{f}).
\]
\end{thm}

\end{document}